\documentclass[11pt,letterpaper]{amsart}%
\usepackage{amsmath}
\usepackage{txfonts}
\usepackage{graphicx}
\usepackage{amsthm}
\usepackage{amsfonts}
\usepackage{amssymb}
\usepackage[utf8]{inputenc}
\usepackage{comment}
\usepackage{xcolor}
\usepackage{lipsum}
\usepackage{ifthen}%
\setcounter{MaxMatrixCols}{30}

\usepackage{tikz}

\usepackage{newtx}
\usepackage{relsize}



\providecommand{\U}[1]{\protect\rule{.1in}{.1in}}
\numberwithin{equation}{section}
\newtheorem{theorem}{Theorem}[section]
\newtheorem{lem}[theorem]{Lemma}
\newtheorem{thm}[theorem]{Theorem}
\newtheorem{pro}[theorem]{Proposition}
\newtheorem{cor}[theorem]{Corollary}
\newtheorem{defi}[theorem]{Definition}
\newtheorem{rem}[theorem]{Remark}

\setcounter{MaxMatrixCols}{30}

\setcounter{tocdepth}{2}

\def\Area{\operatorname{Area}\,}
\def\Re{\operatorname{Re}}
\def\Res{\operatorname{Res}}
\def\euc{\operatorname{euc}}
\def\loc{\operatorname{loc}}

\def\S2{\mathbb{S}^2}
\def\s{\,\,\,\,}

\def\endproof{$\hfill\Box$}
\def\R{\mathbb{R}}

\def\S{\mathcal{S}}
\def\M{\mathcal{M}}
\def\K{\mathbb{K}}

\def\diam{\mathrm{diam}\,}
\def\div{\mathrm{div}}

\allowdisplaybreaks
\def\s{\,\,\,\,}

\def\endproof{$\hfill\Box$\\}
\def\R{\mathbb{R}}

\def\S{\mathcal{S}}
\def\M{\mathcal{M}}
\def\K{\mathbb{K}}
\def\res{\mathrm{Res}}

\allowdisplaybreaks

\newcommand{\chapter}{\part}

\title[Prescribed curvature with singularities]{Prescribing negative curvature with cusps and conical singularities on compact surface}

\author{Jingyi Chen, Yuxiang Li, Yunqing Wu }
\address{Department of Mathematics\\
	The University of British Columbia, Vancouver, Canada}
\email{jychen@math.ubc.ca}
\address{
	Department of Mathematics\\
	Tsinghua University, Beijing, China}
\email{liyuxiang@mail.tsinghua.edu.cn}
\address{
	The Institute of Geometry and Physics\\
	University of Science and Technology of China, Hefei, Anhui, People's Republic of China}
\email{yqwu19@ustc.edu.cn}

\thanks{Chen is partially supported by NSERC Discovery Grant GR010074}
\thanks{Li is partially supported by  National Key R\&D Program of China 2022YFA1005400}


\allowdisplaybreaks[4]

\date{}

\begin{document}
	
	\maketitle

	\vspace{-.2in}
	
	\begin{abstract}
		On a compact surface, we prove existence and uniqueness of the conformal metric whose curvature is prescribed by a negative function away from finitely many points where the metric has prescribed angles presenting cusps or conical singularities.
	\end{abstract}
	
	\tableofcontents
	
	
	\section{Introduction}
	
	For a compact surface $\Sigma$ with a Riemannian metric $g_{0}$ with 
	Gauss curvature $K_{g_0}$ the classical Nirenberg's question asks whether there is a Riemannian metric $g$ such that $g=e^{2u}g_0$ and its Gauss curvature equals a given function $K$. This is equivalent to ask whether 
	\begin{align}
	\label{regular conformal metric equation}
	\Delta_{g_{0}} u=  K_{g_{0}}-K e^{2u}
	\end{align}
	is solvable, under the necessary constraints imposed on $K$ in terms of the Euler characteristic 
	$\chi(\Sigma)$ via the Gauss-Bonnet theorem. When $\chi(\Sigma)<0$, Berger showed in \cite{Berger} that \eqref{regular conformal metric equation} is uniquely solvable if $K<0$. When $\chi(\Sigma)=0$, Kazdan-Warner \cite{Kazdan-W} proved that \eqref{regular conformal metric equation} is solvable 
	if and only if $K\equiv 0$ or $K$ changes sign and $\int_{\Sigma} K e^{2v}d V_{g_{0}}<0$ where $\Delta_{g_0} v = K_{g_0}$. When $\chi(\Sigma)>0$, Chang-Yang \cite{chang1987prescribing, chang1988conformal} and Chen-Ding \cite{chen1987scalar} found sufficient conditions for solvability. 
	
	When $g$ has prescribed singular behaviour at a finite set of points on $\Sigma$, the prescribing curvature problem 
	has been set up and studied. A real divisor $\mathfrak{D}$ on $\Sigma$ is a formal sum
	\begin{align*}
	\mathfrak{D}=\sum_{i=1}^{m} \beta_{i} p_{i}, 
	\end{align*}
	where $\beta:=(\beta_1,\cdots,\beta_m)\in\R^m$ and $p_{i} \in \Sigma$ are distinct. The Euler characteristic of the pair $(\Sigma,\beta)$ is defined to be
	\begin{align*}
	\chi(\Sigma,\beta):=\chi(\Sigma)+\sum_{i=1}^{m} \beta_{i}. 
	\end{align*}
	A conformal metric $g$ on $(\Sigma,g_{0})$ is said to represent the divisor $\mathfrak{D}$ if $g$ is a smooth metric away from $p_{1},\cdots,p_{m}$ such that each $p_{i}$ has a neighbourhood $U_{i}$ with a coordinate $z_{i}$ such that $z_{i}(p_{i})=0$ and $g$ is in the form
	\begin{align*}
	g=e^{2v} |z_{i}|^{2 \beta_{i}}g_{\euc},
	\end{align*}
	where $v \in C^0(U_{i}) \cap C^{2}(U_{i} \backslash \{p_{i}\})$. 
	The point $p_{i}$ is called a conical singularity of angle $\theta_i=2\pi (\beta_{i}+1)$ if $\beta_{i}>-1$ and a cusp if $\beta_i=-1$. 
	
	Finding $g=e^{2u}g_{0}$ represent $\mathfrak{D}$ with a prescribed function $K$ is equivalent to solve
	\begin{align}
	\label{singular conformal metric equation}
	-\Delta_{g_{0}} u= K e^{2u} -K_{g_{0}}-2 \pi \sum_{i=1}^{m} \beta_{i} \delta_{p_{i}}
	\end{align}
	in the sense of distributions, where $\delta_{p_{i}}$ is the Dirac measure at $p_{i}$.
	
	In \cite{McOwen}, McOwen proved that for a compact Riemann surface  $(\Sigma,g_{0})$ with Gauss curvature $K \equiv -1$ and a divisor $\mathfrak{D}$, if $\chi(\Sigma,\beta)<0$ and all $\beta_{i}>-1$, then there exists a unique conformal metric $g$ representing $\mathfrak{D}$ with curvature $-1$ (also see the reference therein for earlier history). Troyanov further proved a more general result for nonpositive variable curvature with prescribed conical singularities:  
	\begin{thm}\cite[Theorem 1]{Troyanov1991}
		\label{thm Troyanov negative curvature existence}
		Let $\mathfrak{D}=\sum_{i=1}^{m} \beta_{i} p_{i}$ be a divisor on a compact surface $\Sigma$ with $\beta_{1},...,\beta_m>-1$. Let $\displaystyle{K}<0$ be a H\"{o}lder continuous function on $\Sigma$. If $\chi(\Sigma,\beta)<0$, then there exists a unique conformal metric $g$ on $\Sigma$ representing $\mathfrak{D}$ with curvature $K$. 
	\end{thm}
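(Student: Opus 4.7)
My approach would be to reduce the distributional equation \eqref{singular conformal metric equation} to a nonsingular semilinear equation with a weighted exponential nonlinearity, and then solve the latter by direct minimization of a strictly convex energy. This parallels Berger's and McOwen's arguments, modified to accommodate the singular weight introduced by the divisor.

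First, I would use the Green's functions $G_i$ of $(\Sigma,g_0)$, normalized by $-\Delta_{g_0} G_i = \delta_{p_i} - \vol(\Sigma,g_0)^{-1}$, and set $G := -2\pi \sum_{i=1}^m \beta_i G_i$. Since $G = \beta_i \log|z| + O(1)$ near each $p_i$ in local conformal coordinates and $\beta_i > -1$, the function $e^{2G}$ lies in $L^p(\Sigma)$ for some $p>1$. Substituting $u = v+G$ absorbs the Dirac masses and converts \eqref{singular conformal metric equation} into
$$-\Delta_{g_0} v \;=\; K e^{2G} e^{2v} - \widetilde K, \qquad \widetilde K := K_{g_0} + \frac{2\pi \sum_i \beta_i}{\vol(\Sigma,g_0)},$$
with $\widetilde K\in C^\infty(\Sigma)$ and $\int_\Sigma \widetilde K\, dV_{g_0} = 2\pi \chi(\Sigma,\beta) < 0$ by Gauss--Bonnet.

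Next, I would interpret this equation as the Euler--Lagrange equation of
$$J(v) \;=\; \tfrac12 \int_\Sigma |\nabla v|^2\, dV_{g_0} + \int_\Sigma \widetilde K \, v\, dV_{g_0} + \tfrac12 \int_\Sigma (-K) e^{2G} e^{2v}\, dV_{g_0}$$
on $H^1(\Sigma,g_0)$. Because $-K>0$, the exponential term is strictly convex, making $J$ strictly convex and giving uniqueness at once. For existence, I would show $J$ is coercive: splitting $v=\bar v + v_0$ with $\int v_0=0$, a weighted Jensen inequality against the finite measure $e^{2G}\,dV_{g_0}$ (combined with the two-dimensional Sobolev embedding $H^1 \hookrightarrow L^{p'}$ for the conjugate exponent to $p$) bounds $\int (-K) e^{2G} e^{2v}$ from below by $c_1 \exp\!\bigl(2\bar v - C\|\nabla v_0\|_2\bigr)$, while the Poincaré inequality absorbs the oscillatory linear contribution into $\tfrac14 \|\nabla v_0\|_2^2$. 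The remaining linear term $\bar v \int \widetilde K = 2\pi \chi(\Sigma,\beta)\,\bar v$ has negative coefficient in $\bar v$, so it pushes $J$ to $+\infty$ as $\bar v\to-\infty$, while the exponential handles $\bar v\to +\infty$ and the Dirichlet term handles $\|\nabla v_0\|_2\to\infty$. A standard direct-method argument (Moser--Trudinger to pass to the limit in the exponential via Fatou, weak lower semicontinuity of the Dirichlet energy, and weak continuity of the linear term) then produces a minimizer $v\in H^1(\Sigma)$, which is a weak solution. Elliptic bootstrap gives $v\in C^{2,\alpha}_{\loc}(\Sigma\setminus\{p_1,\dots,p_m\})$; near each $p_i$ the right-hand side lies in $L^p_{\loc}$, forcing $v$ to be Hölder continuous across $p_i$, so $u=v+G$ has the prescribed singular behaviour and $g=e^{2u}g_0$ represents $\mathfrak D$ with curvature $K$.

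The main obstacle is the coercivity step and, in particular, checking that the singular conformal weight $e^{2G}$ is harmless: one needs Sobolev, Moser--Trudinger, and Jensen estimates to continue to hold against the weight $e^{2G}\,dV_{g_0}$. This is precisely where all three hypotheses of the theorem enter in an essential way. The constraint $\beta_i>-1$ keeps $e^{2G}\in L^p$ with $p>1$, powering the weighted Sobolev and Jensen estimates; the sign $K<0$ provides both strict convexity (hence uniqueness) and positivity of the exponential penalty (half of coercivity); and $\chi(\Sigma,\beta)<0$ supplies the correct sign for the linear term so that $J\to+\infty$ along the constant direction as $\bar v\to-\infty$. None of these hypotheses can be dropped without breaking the scheme.
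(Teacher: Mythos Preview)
The paper does not contain a proof of this statement: Theorem~\ref{thm Troyanov negative curvature existence} is quoted verbatim from \cite{Troyanov1991} and used as a black box (it is invoked in the proof of Theorem~\ref{thm existence of metrics with cusps} to produce the approximating metrics $g_k$). So there is no ``paper's own proof'' to compare against.

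That said, your sketch is essentially the classical variational argument of Berger--McOwen--Troyanov, and the paper explicitly acknowledges this in the introduction: ``Variational methods have been successful for studying Nirenberg's problem (with or without $\mathfrak{D}$), where the Moser-Trudinger inequality is used to guarantee the coercivity which asserts that the solution is a minimizer of some energy functional. However, this method fails if $\beta_i=-1$.'' Your outline is sound for $\beta_i>-1$; the only minor remark is that for lower semicontinuity of the term $\int(-K)e^{2G}e^{2v}$ along a weakly convergent minimizing sequence, Fatou's lemma alone suffices (you do not need Moser--Trudinger there), while Moser--Trudinger is what guarantees $J$ is finite on $H^1$ and what powers the coercivity estimate. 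The point of the present paper is precisely that this variational route collapses when some $\beta_i=-1$ (then $e^{2G}\notin L^p$ for any $p\ge 1$), which is why the authors replace it by an approximation-and-compactness argument based on the area identity and absence of bubbles.
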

	Variational methods have been successful for studying Nirenberg's problem (with or without $\mathfrak{D}$), where the Moser-Trudinger inequality is used to guarantee the coercivity which asserts that the solution is a minimizer of some energy functional. However, this method fails if $\beta_{i}=-1$, i.e. some $p_i$ are cusps. In \cite[Theorem B]{HT}, existence and uniqueness of a conformal metric $g$ representing $\mathfrak{D}$ with curvature $K$ for $\chi(\Sigma,\beta)<0$ and $0\not\equiv K\leq 0$ is obtained under a pinching condition: $aK_1\leq K\leq bK_1$ on $\Sigma\backslash\{p_1,...,p_m\}$ where $K_1$ is the curvature of a conformal metric $g_1$ representing 
	$\mathfrak{D}$ and $a,b$ are  positive constants.

	\vspace{.1cm}
	
	In this work we generalize Theorem \ref{thm Troyanov negative curvature existence} by allowing cusp singularities and weaker regularity on $K$ and the aforementioned result by removing the pinching condition. 
	\begin{thm}
		\label{thm existence of metrics with cusps}
		Let $\mathfrak{D}=\sum_{i=1}^{m} \beta_ip_i$ be a divisor on a closed surface $(\Sigma,g_0)$ with
		$\chi(\Sigma,\beta)<0$ and $\beta_{1},...,\beta_m \geq -1$. 
		Then for any $K\in L^\infty(\Sigma)$ with $K\leq -1$ there exists a unique metric $g=e^{2u}g_0$   
		representing  $\mathfrak{D}$ with curvature $K$. Moreover $g$ is complete and $\Area(\Sigma,g)$ is finite.
	\end{thm}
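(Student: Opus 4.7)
The plan is to obtain existence by a double approximation. Replace each cusp $\beta_i=-1$ by a conical $\beta_i^n = -1+\epsilon_n$ with $\epsilon_n \downarrow 0$, leave the remaining $\beta_i$ unchanged, and write $\mathfrak{D}_n = \sum_i \beta_i^n p_i$; for $n$ large $\chi(\Sigma,\beta^n)<0$ and every $\beta_i^n > -1$. Approximate $K$ by H\"older continuous $K_n$ with $-M \leq K_n \leq -1$, where $M = \|K\|_{L^\infty}$, and $K_n \to K$ almost everywhere. Theorem \ref{thm Troyanov negative curvature existence} applied to $(\mathfrak{D}_n, K_n)$ produces a unique $u_n$ with $g_n = e^{2u_n}g_0$ representing $\mathfrak{D}_n$ and curvature $K_n$; the same theorem with $K\equiv -1$ yields a hyperbolic reference factor $h_n$ representing $\mathfrak{D}_n$. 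One expects $h_n \to h_\infty$ in $C^{2,\alpha}_{\loc}(\Sigma\setminus\{p_1,\ldots,p_m\})$, where $h_\infty$ is the hyperbolic conformal factor of the mixed cusp/conical model prescribed by $\mathfrak{D}$.

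The central estimate is the uniform pinch $h_n - C \leq u_n \leq h_n$ with $C = \tfrac{1}{2}\log M$ independent of $n$. Since $K_n \leq -1$, $h_n$ is a supersolution of the $u_n$-equation, and since $K_n e^{-2C} \geq -1$, $h_n - C$ is a subsolution. Both inequalities come from the maximum principle applied to $w = u_n - h_n$ (respectively $w = h_n - C - u_n$): the Dirac contributions $-2\pi\beta_i^n \delta_{p_i}$ cancel in the equation for $w$, and since $u_n$ and $h_n$ share the same leading singular part $\beta_i^n \log|z_i|$ at every $p_i$, the function $w$ extends continuously across each $p_i$, allowing the strong maximum principle to close up (a logarithmic barrier of the form $-\eta \log|z_i - p_i|$ rules out the supremum being attained only in the limit $z_i \to p_i$). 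Combined with the uniform $L^\infty$ bound on $K_n e^{2u_n}$ and standard $W^{2,p}$ elliptic regularity, the pinch furnishes a subsequential limit $u$ in $C^{1,\alpha}_{\loc}(\Sigma\setminus\{p_i\})$ solving $-\Delta u = K e^{2u} - K_{g_0}$ away from the $p_i$, while the inherited bound $h_\infty - C \leq u \leq h_\infty$ forces $u$ to exhibit the prescribed cusp or conical profile at every $p_i$.

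Uniqueness uses the same maximum principle. If $u,\tilde u$ are two solutions then $w = u - \tilde u$ is bounded on $\Sigma\setminus\{p_i\}$ (the singular parts match) and satisfies $\Delta w = -2Ke^{2\xi}w$ with $-2Ke^{2\xi} \geq 2 > 0$, so the barrier-plus-maximum-principle argument forces $w \equiv 0$. Completeness and finite area reduce to local computations near each $p_i$: using $h_\infty - C \leq u \leq h_\infty$ together with the model profile $h_\infty \sim -\log|z_i| - \log \log |z_i|^{-1}$, the radial arc length $\int_0^{r_0} (s \log s^{-1})^{-1}\,ds$ diverges while the area element $\int_0^{r_0} (r \log^2 r)^{-1}\,dr$ converges; near a conical $p_i$ with $\beta_i > -1$, both quantities are clearly finite.

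The main obstacle will be the pinch argument in the second paragraph: one must verify that $w = u_n - h_n$ extends continuously across each $p_i$ uniformly in $n$ and carry enough quantitative control of the cusp profile through $\epsilon_n \downarrow 0$ to guarantee the limit $u$ has a genuine cusp rather than a small-angle conical point. A secondary issue is the existence and convergence $h_n \to h_\infty$: if the mixed cusp/conical hyperbolic uniformization is not readily available, one has to prove it first as the special case $K \equiv -1$ of the same approximation scheme, which gives the argument a mild bootstrap structure that should be handled before the general $K$ case.
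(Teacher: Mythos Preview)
Your approach is genuinely different from the paper's. The paper proves existence by showing, via a blow--up/area--identity analysis (Proposition~\ref{pro negative curvature implies no bubble} and Theorem~\ref{theorem area convergence of bubble tree}), that the approximating sequence $u_k$ has no bubbles and hence converges without loss of area; no reference metric is used. You instead run a comparison argument against the hyperbolic conformal factor $h_n$ representing $\mathfrak{D}_n$, obtaining the uniform pinch $h_n-C\le u_n\le h_n$ by the maximum principle. At the approximation level this pinch is correct: since $\beta_i^n>-1$, Troyanov's regularity makes $w=u_n-h_n$ continuous on $\Sigma$, and your sub/supersolution computation gives $|w|\le\tfrac12\log M$ independently of $n$. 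Once $h_n\to h_\infty$ is known, the pinch plus Scheff\'e/Vitali passes the distributional equation to the limit. So your existence argument works \emph{provided} the mixed cusp/conical hyperbolic metric $h_\infty$ is available. If you take that as known (from uniformization of punctured orbifolds, say) your route is shorter; if not, your proposed bootstrap --- prove $K\equiv-1$ first by the same scheme --- still needs compactness of $h_n$ with no barrier at hand, and you are essentially back to needing a no--bubble theorem like the paper's. In short, you trade the paper's blow--up machinery for a dependence on hyperbolic uniformization; the paper's route is self--contained and its area identity is reusable elsewhere.

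Your uniqueness sketch has a real gap at the cusps. You assert that $w=u-\tilde u$ is bounded because ``the singular parts match,'' but near a cusp $K e^{2u}$ and $K e^{2\tilde u}$ lie only in $L^1$, not in any $L^p$ with $p>1$, so elliptic regularity does not give $w\in L^\infty$; and the logarithmic barrier $w-\eta\log|z|^{-1}$ only works once you already know $w=o(\log|z|^{-1})$ at $p_i$, which is precisely what is in question. The paper spends Step~1 of its uniqueness proof on a rescaling argument to show $r\|\nabla(u+\log|z|)\|_{C^0(\partial D_r)}\to 0$ for any solution, which yields exactly the needed $o(\log|z|^{-1})$ control, and then closes with a truncation/integration--by--parts argument (Step~2) rather than a direct maximum principle. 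You should expect the uniqueness proof to require comparable work; the one--line maximum principle does not suffice at cusp points.
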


	 Rather than variational methods,  the existence assertion in Theorem \ref{thm existence of metrics with cusps} is proved via an approximation argument: choosing a sequence of divisors $\mathfrak{D}_{k}=\sum_{i=1}^{m} \beta_{i}^{k} p_{i}$ with $-1<\beta_{i}^{k} \rightarrow \beta_{i}$ and a sequence of curvature functions $K_{k}$ approximating $K$, we show the metric $g_{k}=e^{2u_{k}}g_{0}$ representing $\mathfrak{D}_{k}$ as in Theorem \ref{thm Troyanov negative curvature existence} converges to the desired metric. In order to deduce the convergence, an essential ingredient is an {\it area identity} (Theorem \ref{theorem area convergence of bubble tree}). The identity is established for the case that Gauss curvatures are bounded away from zero by a constant, except at finitely many points where curvature concentrates as $\delta$-measure. The next key observation is the absence of bubbles for the {\it negative curvature} case, therefore the convergence follows.  
		

		Area identities also play important roles in the case of positive curvature, where a lot of work has been done on convergence of conformal metrics with positive curvature (possibly representing divisors), for instance, in  \cite{Brezis1991}, \cite{li-Shafrir1994}, \cite{li1999harnack}, \cite{bartolucci2002}, \cite{Bartolucci2007}, \cite{lin2010profile}, \cite{malchiodi2017variational} and the references therein. 

		The area identity fails when curvatures change sign (see Remark \ref{K cannot change sign}). Volume convergence in the Gromov-Hausdorff limits has been obtained for compact manifolds with a uniform lower bound on Ricci curvature \cite{Colding}, and for Alexandrov spaces with curvature bounded above uniformly with respect to the Gromov-Hausdorff distance  \cite{Ya}. 
		
		In a companion paper \cite{CLWpositive}, we focus on a {\it nonexistence} problem of prescribed positive curvature on $\mathbb S^2$ with prescribed conical singularities in which the area identity is applied.

	\medskip
	
	\noindent{\bf Acknowledgement.} The authors are grateful for the wonderful facility provided by Tsinghua University during their collaboration. A large portion of this work was carried out while the first author was visiting BICMR in 2023 and 2024. He would like to thank Professor Gang Tian for the invitation and for the hospitality of the institute.
	
	\section{Preliminary}
	In this section, we list some foundational tools and concepts which will be used in this paper, mostly from \cite{C-L1,C-L2}.

	\subsection{Curvature measure}
	Let $\Sigma$ be a closed surface with a Riemannian metric $g_0$, the Gauss curvature $K_{g_0}$ and the area element $dV_{g_0}$. For every $u$ in $L^1_{\loc}(\Sigma,g_0)$ we associate a Radon measure $g_u=e^{2u}g_0$ on $\Sigma$, and let $\M(\Sigma,g_0)$ be the set of $g_u$ so that there is a {\it signed} Radon measure $\mu(g_u)$ satisfying 
	\begin{equation*}
	\int_\Sigma\varphi \,d\mu(g_u)
	=\int_\Sigma \left(\varphi \,K_{g_0} - u \Delta_{g_0}\varphi\right)dV_{g_0},\s \forall \varphi\in C^\infty_0(\Sigma).
	\end{equation*}
	We write $dV_{g_u}= e^{2u} dV_{g_0}$
	and call the signed Radon measure $\K_{g_u}= \mu(g_u)$ the {\it Gauss curvature measure}   (curvature measure for simplicity) for the Radon measure $g_u$. For smooth $u$ it is well-known
	$$
	K_{g_u} = e^{-2u}\left(K_{g_0}-\Delta_{g_0}u\right).
	$$
	
	In an isothermal coordinate chart $(x,y)$ for the smooth metric $g_0$, we can write $g_0= e^{2u_0}g_{\euc}$ for some local function $u_0$. So any $g\in\M(\Sigma,g_0)$ is locally expressible as $g=e^{2v}g_{\euc}$, where $v\in L^{1}_{\loc}(\Sigma)$
	and
	\begin{equation}
	-\Delta v \ dxdy =\K_{g_v}
	\end{equation}
	as distributions where $\Delta=\frac{\partial^2}{\partial x^2}+\frac{\partial^2}{\partial y^2}$.
	
	\begin{defi}\label{def represent divisors in distributions}\textnormal{
			Let $(\Sigma,g_0)$ be a surface and $\mathfrak{D}=\sum_{i=1}^m \beta_{i} p_{i}$ be a divisor on $\Sigma$. A Radon measure $g=e^{2u}g_0\in\mathcal{M}(\Sigma,g_0)$ {\it represents $\mathfrak{D}$ with curvature function $K$} if
			$$
			\K_{g}=K e^{2u} dV_{g_{0}} -2\pi\sum_{i=1}^{m} \beta_i\delta_{p_i},\s and\s K e^{2u}\in L^1(\Sigma,g_{0}).
			$$
			In other words, in the sense of distributions, 
			\begin{equation}\label{Gauss.equation}
			-\Delta_{g_0}u=K e^{2u}-K_{g_0}-2\pi\sum_{i=1}^{m} \beta_i\delta_{p_i}.
			\end{equation}}
	\end{defi}
	
	By \eqref{Gauss.equation}, for a closed surface $\Sigma$ we have 
	$$
	\frac{1}{2\pi}\int_{\Sigma}Ke^{2u} dV_{g_{0}}=\chi(\Sigma)+\sum_{i=1}^{m}  \beta_i=\chi(\Sigma,\beta).
	$$
	
	For simplicity, we use the notations: $\mathcal{M}(\Omega)=\mathcal{M}( \Omega,g_{\euc}  )$, where $\Omega \subset \mathbb{R}^2$ is a domain and 
	$\mathcal{M}(\mathbb{S}^{2})=\mathcal{M}(\mathbb{S}^{2}, g_{\mathbb{S}^{2}})$, where $g_{\mathbb{S}^{2}}$ is the metric on $\mathbb{S}^{2}$ of curvature 1.  
	
	\vspace{.1cm}
	
	We will use the following estimates from \cite{C-L2} for our later discussion. 

	\begin{thm}\label{thm convergence of solutions of Radon measure when measure is small}
		Let $\{ \mu_k \}$ be a sequence of signed Radon measures on $D$ with $|\mu_k|(D)<\epsilon_0< \pi$. Suppose that $-\Delta u_k=\mu_k$ holds weakly with $\|\nabla u_k\|_{L^1(D)}<\Lambda$ and $\Area(D,g_k)<\Lambda'$. Then after passing to a subsequence, one of the following holds
		\begin{itemize}
			\item[(1)]  $u_k\to u$ weakly in $W^{1,p}(D_{{1}/{2}})$ for any $p \in [1,2)$ and  $e^{2 u_k}\to e^{2 u}$ in $L^q(D_{{1}/{2}})$ for some $q>1$;
			\item[(2)] $u_k\to -\infty$ for a.e. $x$ and $e^{2u_k}\to 0$ in $L^q(D_{1/2})$ for some $q>1$.
		\end{itemize}
	\end{thm}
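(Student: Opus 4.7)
The plan is to decompose $u_k$ into a logarithmic-potential part carrying the mass of $\mu_k$ and a harmonic remainder, then reduce the dichotomy to the behaviour of a single real sequence; the smallness assumption $\epsilon_0<\pi$ controls the potential part via a Brezis--Merle type inequality, while the area bound eliminates the ``blow-up'' branch of the trichotomy.

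\textbf{Step 1 (Newton potential of $\mu_k$).} Set
\[
v_k(x)=-\frac{1}{2\pi}\int_D \log|x-y|\,d\mu_k(y),
\]
so that $-\Delta v_k=\mu_k$ on $D$ in the sense of distributions. Since $|\mu_k|(D)<\epsilon_0<\pi$, a Brezis--Merle type estimate (in the form of the commented-out lemma from \cite{C-L2}) yields an exponent $p_0>4$ and a constant $C$ with $\int_{D_{3/4}}e^{p_0|v_k|}\,dx\leq C$, while Riesz-potential bounds give $\|v_k\|_{W^{1,q}(D_{3/4})}\leq C$ for every $q\in[1,2)$. Along a subsequence, $v_k\to v_\infty$ weakly in $W^{1,q}$ and a.e., and since $e^{2v_k}$ sits in a uniform $L^{p_0/2}$ ball with $p_0/2>2$, Vitali's theorem upgrades this to $e^{2v_k}\to e^{2v_\infty}$ in $L^{q_0}(D_{3/4})$ for some $q_0>1$. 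Jensen's inequality combined with the $L^1$ bound on $v_k$ also provides a uniform positive lower bound $\int_{D_{1/2}}e^{2v_k}\geq c>0$.

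\textbf{Step 2 (Harmonic remainder).} The difference $h_k:=u_k-v_k$ is harmonic on $D$. The hypothesis $\|\nabla u_k\|_{L^1(D)}<\Lambda$ together with the $W^{1,q}$ estimate of Step~1 gives $\|\nabla h_k\|_{L^1(D_{3/4})}\leq C$; interior gradient estimates for harmonic functions then upgrade this to $\|\nabla h_k\|_{L^\infty(D_{1/2})}\leq C$, so $\mathrm{osc}_{D_{1/2}}\,h_k\leq C$ uniformly in $k$.

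\textbf{Step 3 (Trichotomy via $m_k:=h_k(0)$).} Since $|h_k-m_k|\leq C$ on $D_{1/2}$, there are three cases. If $m_k\to+\infty$ along a subsequence, then $\int_{D_{1/2}}e^{2u_k}\geq e^{2m_k-C}\int_{D_{1/2}}e^{2v_k}\to\infty$, contradicting $\Area(D,g_k)<\Lambda'$. If $m_k$ remains bounded, then $h_k$ is bounded in $C^1_{\mathrm{loc}}(D_{1/2})$ and converges smoothly along a subsequence to some harmonic $h_\infty$; combining with Step~1 yields alternative~(1) with $u=v_\infty+h_\infty$. If $m_k\to-\infty$, then $h_k\to-\infty$ uniformly on $D_{1/2}$, hence $u_k=v_k+h_k\to-\infty$ a.e., and $\|e^{2u_k}\|_{L^{q_0}(D_{1/2})}\leq e^{2(m_k+C)}\|e^{2v_k}\|_{L^{q_0}}\to 0$, which is alternative~(2).

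\textbf{Expected obstacle.} The chief delicate point is calibrating the Brezis--Merle estimate so that the sharp threshold $\epsilon_0<\pi$ delivers an exponent $p_0>4$; this is precisely what is needed so that $e^{2v_k}$ simultaneously sits in some $L^{q_0}$ with $q_0>1$ \emph{and} admits a uniform positive lower integral on $D_{1/2}$. Once these constants are tracked, the area hypothesis cleanly rules out the $m_k\to+\infty$ branch, and the other two branches follow from standard Newton-potential bounds and Vitali's convergence theorem. A secondary bookkeeping point is that the a.e.\ convergence of $v_k$ must be strong enough (not merely weak in $W^{1,q}$) to pass to a.e.\ limits inside the exponential, but this is supplied by the compact embedding $W^{1,q}\hookrightarrow L^r$.
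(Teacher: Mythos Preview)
Your proof is correct and follows essentially the same strategy as the paper's (commented-out) argument: both exploit the Brezis--Merle inequality with the threshold $\epsilon_0<\pi$ to obtain uniform exponential integrability with exponent $p_0>2$, then reduce the dichotomy to the behaviour of a single real centering sequence (your $m_k=h_k(0)$ versus the paper's mean value $c_k$ of $u_k$ on $D_{1/2}$), ruling out blow-up via the area bound and Jensen's inequality. The only cosmetic difference is your decomposition of $u_k$ into a Newton potential plus a harmonic remainder, whereas the paper subtracts the mean value directly and appeals to \cite[Corollary 2.4, Lemma 2.5]{C-L2} for the $W^{1,q}$ and exponential bounds on $u_k-c_k$.
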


	\begin{lem}
		\label{lemma gradient estimate on Riemann surfaces}
		Let $\mu$ be a signed Radon measure defined on a closed surface $(\Sigma,g_{0})$ and $u \in L^{1}(\Sigma,g_{0})$ solves $-\Delta_{g_{0}} u= \mu$ weakly. Then, for any $r>0$ and $q \in [1,2)$, there exists $C=C(q,r,g_{0})$ such that 
		\begin{align*}
		r^{q-2} \int_{ B_{r}^{g_{0}}(x)} |\nabla_{g_{0}} u |^{q} \leq C (|\mu|(\Sigma) )^{q}. 
		\end{align*}
	\end{lem}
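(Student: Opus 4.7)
The plan is to invoke the Green's function representation of $u$. Since $-\Delta_{g_0}u=\mu$ holds weakly and $u\in L^1(\Sigma,g_0)$, pairing with the constant test function $\varphi\equiv 1$ immediately forces $\mu(\Sigma)=0$. Let $G(x,y)$ denote the Green's function of $-\Delta_{g_0}$ on the closed surface. The standard parametrix construction gives
$$G(x,y)=-\frac{1}{2\pi}\log d_{g_0}(x,y)+H(x,y),\qquad H\in C^\infty(\Sigma\times\Sigma),$$
so in particular $|\nabla_{g_0,x}G(x,y)|\le C(g_0)\,d_{g_0}(x,y)^{-1}$. Setting $\bar u:=\frac{1}{\Area(\Sigma,g_0)}\int_\Sigma u\,dV_{g_0}$, one has $u(x)-\bar u=\int_\Sigma G(x,y)\,d\mu(y)$ for a.e.\ $x$, and differentiation under the integral (legitimate a.e.\ because $q<2$ makes $d_{g_0}(\cdot,y)^{-1}$ locally $L^q$) yields
$$\nabla_{g_0}u(x)=\int_\Sigma \nabla_{g_0,x}G(x,y)\,d\mu(y).$$

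Next I would apply Minkowski's integral inequality in $L^q(B_r^{g_0}(x))$ with respect to the total variation measure $|\mu|$:
$$\|\nabla_{g_0}u\|_{L^q(B_r^{g_0}(x))}\le \int_\Sigma \|\nabla_{g_0,z}G(\cdot,y)\|_{L^q(B_r^{g_0}(x))}\,d|\mu|(y).$$
For each fixed $y$, splitting the ball at $\{d_{g_0}(z,y)\le 2r\}$ and its complement (on which the integrand is bounded) and using $q<2$ for integrability of $d_{g_0}(\cdot,y)^{-q}$ near $y$ gives
$$\int_{B_r^{g_0}(x)} d_{g_0}(z,y)^{-q}\,dV_{g_0}(z)\le C(q,r,g_0)\,r^{2-q}$$
uniformly in $x,y\in\Sigma$. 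Substituting and raising both sides to the $q$-th power produces
$$\int_{B_r^{g_0}(x)}|\nabla_{g_0}u|^q\,dV_{g_0}\le C(q,r,g_0)\,r^{2-q}\,(|\mu|(\Sigma))^q,$$
and multiplying by $r^{q-2}$ delivers the claim.

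The main technical obstacle I anticipate is rigorously justifying the representation $u-\bar u=G\ast\mu$ for a general signed Radon measure and an $L^1$ weak solution. This should be handled by mollifying $\mu$ to a smooth density (via convolution in a finite isothermal atlas with a partition of unity), solving the smooth problem via the Green's function, and passing to the limit, using that $\sup_{y\in\Sigma}\int_\Sigma|G(x,y)|^p\,dV_{g_0}(x)<\infty$ for every $p<\infty$. An alternative route that avoids $G$ entirely is to cover $\Sigma$ by finitely many isothermal charts, transfer the equation to $-\Delta v=\tilde\mu$ on Euclidean disks, and apply the local gradient estimate of \cite[Corollary 2.4]{C-L2}; this needs the preliminary bound $\|u-\bar u\|_{L^1(\Sigma)}\le C(g_0)|\mu|(\Sigma)$, which follows immediately from $\sup_y\int_\Sigma|G(x,y)|\,dV_{g_0}(x)<\infty$. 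Both routes give the same conclusion with the dependence $C=C(q,r,g_0)$ allowed in the statement.
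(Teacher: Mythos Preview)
The paper does not supply a proof of this lemma; it is simply quoted as one of the preliminary estimates taken from \cite{C-L2}. Your Green's function argument is correct and is the standard route: the representation $u-\bar u=\int_\Sigma G(\cdot,y)\,d\mu(y)$, the pointwise bound $|\nabla_x G(x,y)|\le C(g_0)\,d_{g_0}(x,y)^{-1}$, Minkowski's inequality, and the elementary estimate $\int_{B_r^{g_0}(x)}d_{g_0}(z,y)^{-q}\,dV_{g_0}(z)\le C(q,g_0)\,r^{2-q}$ (valid uniformly in $x,y$ since $q<2$) combine to give exactly the claimed inequality. Your alternative route through a finite isothermal atlas and the local disk estimate \cite[Corollary~2.4]{C-L2} is also sound, and is presumably how the result is organized in \cite{C-L2} itself. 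One small simplification for the justification step: rather than mollifying $\mu$, you can define $v:=\int_\Sigma G(\cdot,y)\,d\mu(y)$ directly, check $v\in L^1$ and $-\Delta_{g_0}v=\mu$ weakly (Fubini with smooth test functions), and then conclude $u-v$ is a weakly harmonic $L^1$ function on the closed surface, hence constant.
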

	
	\subsection{Gauss-Bonnet formula}
	Let $g = e^{2u}g_{\euc} \in \mathcal{M}(D)$. There is a set $E\subset (0,1)$ such that the 1-dimensional Lebesgue measure of $(0,1)\backslash E$ is zero  (\cite{C-L2})  and $\int_{\partial D_t} \frac{\partial u}{\partial r}$ is well-defined for all $t \in E$. Furthermore, 
	\begin{equation}\label{Gauss-Bonnet}
	\K_{g}(D_t\backslash D_s) = -\int_{\partial D_t}\frac{\partial u}{\partial r} + \int_{\partial D_s}\frac{\partial u}{\partial r},
	\end{equation}
	\begin{equation}\label{Gauss-Bonnet.disk}
	\K_{g}(D_t) = -\int_{\partial D_t}\frac{\partial u}{\partial r}
	\end{equation}
	for all $t, s \in E$.
	The latter equation notably implies the convergence:
	\begin{equation}\label{Gauss-Bonnet.at.singularity1}
	\lim_{t \to 0, t \in E} \int_{\partial D_t} \frac{\partial u}{\partial r} = -\K_g(\{0\}).
	\end{equation}
	The set $\{t : \K_{g}(\partial D_t) \neq 0\}$ is countable, allowing to pick $E$ with
	$$
	\K_{g}(\partial D_t) = 0,\quad \text{for all } t \in E.
	$$
	
	Let $g_k = e^{2u_k}g_{\euc} \in \mathcal{M}(D)$ and assume that $u_k\to u$ weakly. Set $g = e^{2u}g_{\euc}$. We select a subset $\mathcal{A} \subset (0,1)$ with the following properties:
	\begin{enumerate}
		\item $(0,1) \backslash\mathcal{A}$ has zero (1 dimensional Lebesgue) measure;
		\item $\K_{g_k}(\partial D_t) = \K_g(\partial D_t) = 0$ for all $t \in \mathcal{A}$ and for all $k$;
		\item \eqref{Gauss-Bonnet.disk} holds for any $t \in \mathcal{A}$.
	\end{enumerate}
	
	From \cite[Theorem 1.40]{EG}, $\K_{g_k}(D_t) \rightarrow \K_{g}(D_t)$ for all $t \in \mathcal{A}$, in turn 
	\begin{equation}\label{convergence.u_r}
	\int_{\partial D_t}\frac{\partial u_k}{\partial r} \rightarrow \int_{\partial D_t}\frac{\partial u}{\partial r},\quad \text{for all } t \in \mathcal{A}.
	\end{equation}
	
	Now, we assume $\K_{g}(\partial D_t) = 0$ for any $t$. As $t\to s$, we observe
	$$
	\K_{g}(D_t\backslash D_s) \rightarrow 0.
	$$
	Let $\Phi_u(t) = \int_{\partial D_t}\frac{\partial u}{\partial r}, t \in \mathcal{A}$. For any $t_k \in \mathcal{A}$ with $t_k \rightarrow t_0$, $\Phi(t_k)$ is a Cauchy sequence. Consequently, we can extend the domain of $\Phi$ from $\mathcal{A}$ to $(0,1)$ by setting
	\begin{align}
	\label{continuous extension of phi}
	\Phi_u(t) = \lim_{s \in \mathcal{A}, s \rightarrow t}\Phi_u(s).
	\end{align}
	This extension ensures $\Phi_u$ is a  continuous function on $(0,1)$, and we establish the relationships:
	$$
	\K_{g}(D_t\backslash D_s) = -\Phi_u(t) + \Phi_u(s),\quad \K_{g}(D_t) = -\Phi_u(t),\quad \text{for all } s, t \in (0,1).
	$$

	\subsection{Bubble trees}\label{section.bubble.tree}
	In this subsection, we will adopt certain notations relevant to bubble trees from \cite{C-L1}.
	
	Let $g_k=e^{2u_k}g_{\euc}\in\mathcal{M}(D)$ with $|\K_{g_k}|(D)<\Lambda$. 
	
	\begin{defi}
		\label{def blowup sequence and bubble}\textnormal{
			A sequence $\{ (x_k,r_k) \}_{k=1}^{\infty}$ is a {\it blowup sequence of $\{u_{k}\}_{k=1}^{\infty}$ at $0$} if $x_k\rightarrow 0$, $r_k\rightarrow 0$ and $u_k'=u_k(x_k+r_kx)+\log r_k$ converges weakly to a function $u$ in $\cap_{p\in[1,2)}W^{1,p}_{\loc}(\R^2)$, and $u$ is called a {\it bubble}. The sequence $\{u_{k}\}^\infty_{k=1}$ {\it has no bubble at $0$} if no subsequence of $\{u_{k}\}^\infty_{k=1}$ has a blowup sequence at $0$. }
	\end{defi}
	
	The blowup is designated for invariance of area:
	$$
	\Area(D_R, e^{2u'_k} g_{\euc})=\Area(D_{Rr_k}(x_k),e^{2u_k} g_{\euc}).
	$$ 	
	We say $u_k'$ converges to a {\it ghost bubble} if there exists $c_{k} \rightarrow -\infty$ such that $u_k'-c_{k}$ converges weakly to some function $v$ in $\cap_{p\in[1,2)}W^{1,p}_{\loc}(\R^2)$. 
	
	The ghost bubble has the following property:
	\begin{align*}
	\lim_{r \rightarrow 0} \lim_{k \rightarrow \infty} \int_{ D_{{1}/{r} } \backslash \cup_{z \in \mathcal{S} } D_{r}(z)} e^{ 2u_k'  } =0,
	\end{align*}
	where $\mathcal{S}$ is the set consisting of measure-concentration points:
	$$
	\S= \left\{y: \mu(y) \geq \frac{\epsilon_{0}}{2}     \right\}.
	$$
	where $\mu$ is the weak limit of $|\mathbb{K}_{ e^{ 2 (u_k'-c_k) } g_{\euc} }|$ and $\epsilon_{0}$ is chosen as in Theorem \ref{thm convergence of solutions of Radon measure when measure is small}.
	
	In the sequel, we will simply write $\{ (x_k,r_k) \}, \{u_{k}\}$ instead of $\{(x_k,r_k)\}^\infty_{k=1}, \{u_{k}\}_{k=1}^{\infty}$ respectively. When $\{u_k\}$ has several blowup sequences at a point, we classify them according to the following definitions.
	
	\begin{defi}
		\label{def bubble different}
		\textnormal{Two blowup sequences $\{(x_k,r_k)\}$ and $\{(x_k',r_k')\}$ of $\{u_k\}$ at $0$ are {\it essentially different} if one of the following happens
			$$
			\frac{r_k}{r_k'}\rightarrow\infty,\mbox{ or }\,
			\frac{r_k'}{r_k}\rightarrow\infty,
			\s \mbox{or} \s\frac{|x_k-x_k'|}{r_k+r_k'}
			\rightarrow\infty.
			$$
			Otherwise, they are {\it essentially same}. }
	\end{defi}
	
	\begin{defi}
		\textnormal{We say {\it the sequence  $\{u_k\}$ has $m$ bubbles} if $\{u_k\}$ has $m$ essentially different blowup sequences and no subsequence of $\{u_k\}$ has more than $m$ essentially different blowup sequences.}
	\end{defi}
	
	\begin{defi}
		\label{def bubble on top}
		\textnormal{For two essentially different blowup sequences $\{(x_k,r_k)\}, \{(x_k',r_k')\}$, we say $\{ (x_k',r_k') \}$ is {\it on the top of $\{ (x_k,r_k) \}$} and write $\{ (x_k',r_k')\}<\{ (x_k,r_k)\}$, if
			$\frac{r_k'}{r_k}\rightarrow 0$ and
			$\frac{x_k'-x_k}{r_k}$ converges as $k\to\infty$. }
	\end{defi}
	
	\begin{rem}\textnormal{
			\label{remark different bubble relationship}
			(1) If $\{ (x_k',r_k') \}<\{ (x_k,r_k) \}$, we have
			$$
			l_k:=\frac{r_k'}{r_k}\rightarrow 0, \s
			y_{k}:=\frac{x_k'-x_k}{r_k}\rightarrow y.
			$$
			If we set $v_k=u_k(x_k+r_kx)+\log r_k$ and $v_k'=u_k(x'_k+r_k'x)+\log r_k'$, then we have
			\begin{align*}
			v_k'(x)&=u_k(x_k'+r_k' x)+\log r_k'=u_k\Big(x_k+r_k
			\Big(\frac{r_k'}{r_k}x+\frac{x_k'-x_k}{r_k}\Big)\Big)+\log r_k+\log 
			\frac{r_k'}{r_k}\\
			&=u_{k}( x_{k}+r_{k}(  l_{k}x+y_{k}   )   ) +\log r_{k}+\log l_{k}=    v_k \left(l_kx+y_k\right)+\log l_{k}. 
			\end{align*}
			Then $\{ (y_k,l_k) \}$ is a blowup sequence of $\{ v_k \}$ at $y$, and the limit of $v_k'$ can be considered as a bubble of $\{  v_k \}$.\vspace{.1cm}
			\newline 
			(2) If two essentially different blowup sequences $\{ (x_k,r_k)\}, \{ (x_k',r_k') \}$ are not on the top of each other, then we must have
			$$
			\frac{|x_k-x_k'|}{r_k+r_k'} \rightarrow \infty,
			$$
			which yields that for any $t>0$, 
			$$
			D_{tr_k}(x_k)\cap D_{tr_k'}(x_k')=\emptyset
			$$
			when $k$ is sufficiently large.}
	\end{rem}
	
	\begin{defi}
		\label{def bubble right on top}
		\textnormal{We say  $\{ (x_k',r_k') \}$ is {\it right on the top} of $\{ (x_k,r_k) \}$, if $\{ (x_k',r_k') \}<\{ (x_k,r_k) \}$ and there is no blowup sequence $\{ (x_k'',r_k'') \}$, such that
			$$
			\{ (x_k',r_k') \}<\{ (x_k'',r_k'') \}<\{ (x_k,r_k) \}.
			$$
		}
	\end{defi}
	
	\begin{defi}
		\label{def bubble level}
		\textnormal{We define the {\it level of a blowup sequence} and its corresponding bubble
			as follows.\\
			(1) We say a blowup sequence $\{ (x_{k},r_{k}) \} $ of $\{ u_{k} \}$ at the point 0 is at 1-level  if there does not exist any other blowup sequence $\{ (x_k',r_k') \}$ with $\{ (x_k,r_k) \} < \{ (x_k',r_k') \}.$ \\
			(2) For $m \geq 2$, we define a blowup sequence  at $m$-level inductively: a blowup sequence $\{ (x_{k},r_{k}) \} $ is called  at $m$-level if there exists a blowup sequence $\{ (x_k',r_k') \} $ which is at $(m-1)$-level such that $\{ (x_{k},r_{k}) \} $ is right on the top of $\{ (x_k',r_k') \} $. 	\\
			(3) For $m \geq 1$, a bubble is called at $m$-level if it is induced by a blowup sequence  at $m$-level. }
	\end{defi}
	
	We point out that two essentially different blowup sequences at $m$-level may be  right on the top of two essentially different blowup sequences at $(m-1)$-level.

	By Remark \ref{remark different bubble relationship}, if $\{ (x_k^1,r_k^1) \},\{ (x_k^2,r_k^2) \}$ are at the same level, then  for any $t>0$,
	$$
	D_{tr_k^1}(x_k^1)\cap D_{tr_k^2}(x_k^2) =\emptyset
	$$
	when $k$ is sufficiently large.

	When $g_k$ is defined on $D \backslash D_{r_k}(x_k)$ with $r_{k} \rightarrow 0$, we introduce 
	
	\begin{defi}\textnormal{
			Let $g_k=e^{2u_k}g_{\euc}\in\mathcal{M}(D\backslash D_{r_k}(x_k))$ with $r_k\rightarrow 0$, $x_k\rightarrow x_0\in D$.
			We say {\it $u_{k}$ has a bubble $u$ near $x_{0}$ on $D \backslash D_{r_k}(x_k)$} if there exist $t_{k} \rightarrow 0$, $y_{k} \rightarrow x_{0}$, 
			$$
			\frac{r_k}{t_k}\rightarrow 0,\s \textnormal{or}\s \frac{|x_k-y_k|}{r_k}\rightarrow\infty
			$$
			such that $u_{k}(t_{k}x+y_{k})+\log t_{k}$ converges weakly to $u$ in $W_{\loc}^{1,p}(\mathbb{R}^{2})$. }
	\end{defi}

	\subsection{Residue of curvature measure }
	
	Let $ g=e^{2u}g_{\euc} \in \mathcal{M}( D \backslash \{0\}  )$ with 
	\begin{align*}
	|\mathbb{K}_{g}|( D \backslash \{0\}  ) <\pi.  
	\end{align*}
 We extend $\K_g$ to a signed Radon measure $\mu$ by taking $\mu(A)=	\mathbb{K}_{g}(A \cap (D \backslash \{0\})), \forall A \subset \mathbb{R}^{2}$ and we write
	$\mu=\K_g\lfloor (D\backslash\{0\})$.
	Let $I_{\mu}$ be defined by
	\begin{align}
	\label{def of I mu}
	I_{\mu}(x)=-\frac{1}{2 \pi} \int_{\mathbb{R}^{2}} \log |x-y| d \mu(y),
	\end{align}
	then  by a result of Brezis-Merle \cite{Brezis1991} (see also \cite[Proposition 2.2]{C-L2}),
	$I_\mu\in\cap_{q \in [1,2)}W^{1,q}_{\loc}(\R^2)$, $ -\Delta I_{\mu}=\mu$ and
	\begin{align}
	\label{Integral estimate of Imu}
	\int_{D_{R}}  e^{  \frac{ (4 \pi-\epsilon)|I_{\mu}|}{ |\mu|(\mathbb{R}^{2})  }    } \leq CR^{  \frac{\epsilon}{2 \pi}   }, \quad \forall R>0 \text{ and } \epsilon \in (0,4 \pi).
	\end{align}
	By (\ref{Gauss-Bonnet.at.singularity1}), 
	$$
	\lim_{{\rm a.e}\,\,r\rightarrow 0}\int_{\partial D_r}\frac{\partial I_{\mu}}{\partial r}=0.
	$$
	Put
	$$
	\lambda=\lim_{{\rm a.e}\,\,r\rightarrow 0}\frac{1}{2 \pi }\int_{\partial D_r}\frac{\partial u}{\partial r},
	$$
	and
	$$
	w(z)=u(z)-I_\mu(z)-\lambda\log |z|.
	$$
	Then $w$ is harmonic on $D\backslash \{0\}$ with $\int_{\partial D_t}\frac{\partial w}{\partial t}=0$. Hence, we can find a holomorphic function $F$ on $D\backslash\{0\}$ with
	$\Re(F)=w$, which means
	\begin{align*}
	(u-I_{\mu})(z)=\Re(F(z))+\lambda \log |z|, \quad z \in D \backslash \{0\}.
	\end{align*}
	
	\begin{lem}
		\label{lemma removable singularity}
		\textnormal{(1)} If the area of $D$ is finite, namely,
		\begin{align*}
		\int_{D} e^{2u}<\infty,
		\end{align*}
		then $w$ is smooth on $D$, and $\lambda \geq -1$. 
		Moreover, $g\in \mathcal{M}(D)$ with
		$$
		\K_g=\mu-2\pi\lambda\delta_0.
		$$
		\textnormal{(2)} If we further assume $\mathbb{K}_{g} \geq 0$ on $D \backslash \{0\}$, then $\lambda>-1$. 
	\end{lem}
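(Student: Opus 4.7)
My plan rests on the decomposition $u(z)=\Re F(z)+\lambda\log|z|+I_\mu(z)$ already at hand on $D\setminus\{0\}$. The tasks to carry out are: (i) show that $F$ extends holomorphically across $0$, so that $w=\Re F$ is smooth on $D$; (ii) deduce $\lambda\geq -1$; (iii) identify the distributional curvature via $-\Delta u=\mu-2\pi\lambda\delta_0$. Part (2) is then a refinement of the Jensen argument of step (ii) under the sign condition $\mu\geq 0$.

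For (i), expand $F$ into a Laurent series at $0$; the goal is to rule out any nontrivial principal part. In the pole case, let $N\geq 1$ be the order and $b_{-N}\neq 0$ the leading coefficient; on an angular sector $S$ of measure $\geq 2\pi/(3N)$ where $\cos(N\theta-\arg b_{-N})\geq 1/2$, one has $\Re F(re^{i\theta})\geq |b_{-N}|/(8r^N)$ for $r$ small. The hypothesis $|\mu|(D)<\pi$ combined with the Brezis-Merle estimate \eqref{Integral estimate of Imu} gives $e^{c|I_\mu|}\in L^1(D_{r_0})$ for some $c>4$, so by Chebyshev's inequality the exceptional set $\{|I_\mu|>|\log r|^2\}$ has measure at most $Ae^{-c|\log r|^2}$, negligible next to the $\sim r^2$ measure of $S\cap\{r/2<|z|<r\}$. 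Restricting to its complement gives $e^{2u}\geq\exp(|b_{-N}|/(8r^N)-2|\log r|^2)|z|^{2\lambda}$ on a set of measure $\gtrsim r^2$, so its integral is at least $cr^{2\lambda+2}\exp(|b_{-N}|/(16r^N))$, which diverges as $r\to 0$ and contradicts $\int_D e^{2u}\,dV<\infty$. The essential-singularity case is handled by the same argument applied to any single coefficient $b_{-N}\neq 0$ appearing in the Laurent principal part. Hence $F$ is holomorphic on $D$ and $w$ is smooth.

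With $w$ bounded on $\bar D_{r_0}$, Jensen's inequality on circles gives $\int_0^{r_0}r\,e^{2\bar u(r)}\,dr\leq\tfrac{1}{2\pi}\int_{D_{r_0}}e^{2u}\,dV<\infty$. The identity $\tfrac{1}{2\pi}\int_0^{2\pi}\log|re^{i\theta}-y|\,d\theta=\log\max(r,|y|)$ combined with a radial integration by parts yields
\begin{equation*}
\bar I_\mu(r)=A+\tfrac{1}{2\pi}\int_r^{r_0}\tfrac{\mu(D_s)}{s}\,ds
\end{equation*}
for a finite constant $A$, so $\bar u(r)=w(0)+\lambda\log r+A+\tfrac{1}{2\pi}\int_r^{r_0}\tfrac{\mu(D_s)}{s}\,ds$. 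Now suppose toward contradiction that $\lambda=-1-\delta$ with $\delta>0$. Because $\mu(\{0\})=0$, shrink $r_0$ so that $|\mu|(D_{r_0})<\pi\delta$; then $|\int_r^{r_0}\mu(D_s)/s\,ds|\leq\pi\delta\log(r_0/r)$, which yields $r\,e^{2\bar u(r)}\geq c\,r^{-1-\delta}$ near $r=0$, a non-integrable lower bound contradicting the Jensen bound. Hence $\lambda\geq -1$. For part (2), the assumption $\mu\geq 0$ makes $\int_r^{r_0}\mu(D_s)/s\,ds\geq 0$, so already at the borderline $\lambda=-1$ we obtain $r\,e^{2\bar u(r)}\geq c\,r^{-1}$, which is non-integrable; therefore $\lambda>-1$.

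To identify $\K_g$, I compute $-\Delta u$ distributionally on $D$: since $w$ is now harmonic throughout $D$ one has $-\Delta w=0$; $-\Delta(\lambda\log|z|)=-2\pi\lambda\delta_0$; and $-\Delta I_\mu=\mu$ by construction of $I_\mu$. Adding these gives $-\Delta u=\mu-2\pi\lambda\delta_0$, hence $g\in\mathcal{M}(D)$ with $\K_g=\mu-2\pi\lambda\delta_0$. The main technical obstacle is step (i): pitting the super-exponential blow-up $e^{c/r^N}$ of $e^{2\Re F}$ along a pole direction against the factor $e^{2I_\mu}$, which is only $L^p$-integrable rather than pointwise controlled, requires the Chebyshev-plus-Brezis-Merle excision of a super-polynomially thin exceptional set, which is precisely where the smallness hypothesis $|\mu|(D)<\pi$ is brought to bear.
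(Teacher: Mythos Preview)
Your handling of the pole case in step (i) is fine, but the essential-singularity case has a genuine gap. When $F$ has a pole of order $N$, the term $b_{-N}z^{-N}$ dominates the entire principal part as $z\to 0$, and that dominance is exactly what makes your sector estimate $\Re F\geq |b_{-N}|/(8r^N)$ valid. When $F$ has an essential singularity the principal part has infinitely many nonzero coefficients, and for any fixed $N$ the tail $\sum_{k>N}b_{-k}z^{-k}$ overwhelms $b_{-N}z^{-N}$ near $0$; the sector inequality simply fails, so you cannot ``apply the same argument to any single coefficient.'' The paper closes this gap uniformly via an $L^2$/H\"older argument: if $0$ is any singularity of $F$ then $z^m e^{qF/2}$ has a non-removable (in fact essential) singularity at $0$, hence is not in $L^2(D)$ by Parseval on circles; thus $\int_D r^{\lambda q}e^{qw}\geq\int_D|z^m e^{qF/2}|^2=\infty$, and H\"older against $e^{c|I_\mu|}\in L^1$ yields the contradiction with $\int_D e^{2u}<\infty$.

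Your arguments for $\lambda\geq -1$ and for part (2), on the other hand, are correct and take a genuinely different route from the paper. The paper proves $\lambda\geq -1$ by another H\"older trick against the Brezis--Merle bound, and proves $\lambda>-1$ under $\K_g\geq 0$ by a rescaling $u_\tau(x)=u(\tau x)+\log\tau$ together with the pointwise lower bound $I_\mu\geq -\log 2$. Your approach via Jensen's inequality on circles and the explicit formula $\bar I_\mu(r)=A+\tfrac{1}{2\pi}\int_r^{r_0}\mu(D_s)s^{-1}\,ds$ is more direct: it needs only that $|\mu|(D_{r_0})\to 0$ as $r_0\to 0$ (rather than the full exponential integrability of $I_\mu$), and it treats both conclusions with one computation. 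Once step (i) is repaired, this is a nice simplification.
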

	
	\begin{proof}
		\textbf{Step 1}: We show $F$ is smooth. 
		
		Suppose that $0$ is a singularity of $F$. Then 0 is an essential singularity of $e^{sF}$ for any positive $s$. 
		Fix some $q \in (1,2)$ and choose $2m>\lambda q$. Then
		\begin{align*}
		\int_{D} r^{\lambda q   } e^{qw} \geq \int_{D} |z^{m} e^{  \frac{qF}{2}   }|^{2}=\infty,
		\end{align*}
		where we use the fact that 0 is a singularity of $ z^{m} e^{ \frac{qF}{2}   }$. However, 
		\begin{align*}
		\int_{D} r^{ \lambda q } e^{qw}=\int_{D} e^{qu}e^{-q I_{\mu}} \leq \left( \int_{D} e^{2u} \right)^{\frac{q}{2}} \left( \int_{D} e^{ \frac{2q}{2-q} |I_{\mu}|   } \right)^{\frac{2-q}{2}}
		\end{align*}
		leads to a contradiction by choosing $q$ such that $ \frac{2q}{2-q} < \frac{3 \pi }{  |\mu|(\mathbb{R}^{2})}$ and using (\ref{Integral estimate of Imu}). Thus, $F$ is smooth on $D$, and so is $w$. Now $u$ solves the following equation weakly
		\begin{align*}
		-\Delta u= \mu-2 \pi \lambda \delta_{0}.
		\end{align*}
		
		\textbf{Step 2}: We show that $\lambda \geq -1$. 

				If $\lambda <-1$, we choose $\delta=\frac{1}{2}-\frac{1}{2 \lambda} \in (-\frac{1}{\lambda},1)$. Since $|\mathbb{K}_{g}|(D \backslash  \{0\})<+\infty$, 
				we can  select $\epsilon_{1} \in (0, \pi)$ and $\rho\in (0,1)$ such that
				\begin{align*}
				\frac{2 \delta}{1-\delta}<\frac{4 \pi-\epsilon_{1}}{\epsilon_{1}}, \quad |\mathbb{K}_{g}|(D_{\rho} \backslash \{0\})<\epsilon_{1}. 
				\end{align*}
			We set a measure $\mathbb{K}_{g}^{\rho}$ defined on $D_{\rho} \backslash  \{0\}$ by $\mathbb{K}_{g}^{\rho}=\mathbb{K}_{g}$, and a measure $\mu^{\rho}$ defined on $\mathbb{R}^{2}$ by 
			\begin{align*}
			\mu^{\rho}(A)=\mathbb{K}_{g}^{\rho}( A \cap (D_{\rho} \backslash \{0\}) ), \quad \forall A \subset \mathbb{R}^{2}, 
			\end{align*}
			which means that $|\mu^{\rho}|(\mathbb{R}^{2}) \leq \epsilon_{1}$. 
			Similar to (\ref{def of I mu}), we define
			\begin{align*}
			I_{\mu}^{\rho}(x)=-\frac{1}{2 \pi} \int_{\mathbb{R}^{2}} \log |x-y| d \mu^{\rho}(y),
			\end{align*}
			then by (\ref{Integral estimate of Imu}), we have
			\begin{align}
			\label{I mu rho integral}
			\int_{D_{\rho}}  e^{  \frac{ (4 \pi-\epsilon_{1})|I_{\mu}^{\rho}|}{ \epsilon_{1} }    }  \leq \int_{D_{\rho}}  e^{  \frac{ (4 \pi-\epsilon_{1})|I_{\mu}^{\rho}|}{ |\mu^{\rho}|(\mathbb{R}^{2})  }    } \leq C \rho^{  \frac{\epsilon_{1}}{2 \pi}   } \leq C. 
			\end{align}
			We set
			\begin{align*}
			w^{\rho}(z)=u(z)-I_{\mu}^{\rho}(z)-\lambda \log |z|,
			\end{align*}
			then by \textbf{Step 1}, $w^{\rho}$ is smooth on $D_{\rho}$. By the choice of $\delta$, $\delta \lambda <-1$, then we obtain
				\begin{align*}
			\infty&=\int_{D_{\rho}} r^{ 2 \delta \lambda  }=\int_{D_{\rho}} r^{ 2 \delta \lambda  } e^{2 \delta I_{\mu}^{\rho}} e^{-2\delta I_{\mu}^{\rho}} \\
			& \leq \left(  \int_{D_{\rho}} \left(  r^{2 \delta \lambda} e^{2 \delta I_{\mu}^{\rho}}   \right)^{\frac{1}{\delta}}   \right)^{\delta} \left(  \int_{D_{\rho}} e^{ \frac{2 \delta| I_{\mu}^{\rho} | }{1-\delta}    } \right)^{1-\delta}\\
			& \leq \left(  \int_{D_{\rho}} e^{2u-2w^{\rho}} \right)^{\delta }\left(\int_{D_{\rho}}  e^{ \frac{ 2 \delta |I_{\mu}^{\rho}|}{1-\delta}   } \right)^{1-\delta},
			\end{align*}
			which leads to a contradiction by the choice of $\epsilon_{1}$ and (\ref{I mu rho integral}). 

		\textbf{Step 3}: Now we further assume that  $\K_{g}\geq 0$ on $D\backslash\{0\}$. 
		
		If $\lambda=-1$, we set
		$$
		u_\tau(x)=u(\tau x)+\log \tau,
		$$
		then using the fact that: $u(x)=w(x)+I_{\mu}(x)- \log |x|$, we have
		\begin{align*}
		u_{\tau}(x)=I_\mu(\tau x)+w(\tau x)-\log|x|.
		\end{align*}
		It is easy to check 
		$$
		\lim_{\tau\rightarrow 0^+}\int_{D_2\backslash D_1}e^{2u_\tau}=\lim_{\tau\rightarrow 0^+}\int_{D_{2\tau}\backslash D_\tau}e^{2u}=0.
		$$
		By (\ref{def of I mu}) and $\K_{g}\geq 0$, for any $x \in D$, 
		$$
		I_{\mu}(x) \geq  -\frac{1}{2\pi}\int_{\{|x-y|\geq 1\}}\log |x-y| d\mu(y) \geq - \log 2.
		$$
		We fix arbitrary $\tau<\frac{1}{2}$. 
		Then for any $t\in (1,2)$, $\tau t <1$ and 
		$$
		u_\tau|_{\partial D_t}
		= (I_\mu+ w)|_{\partial D_{\tau t}}-\log t>\min_{D} w-2 \log 2,
		$$
		which yields that
		\begin{align*}
		\int_{D_{2} \backslash D_{1} } e^{2u_{\tau}} \geq (3 \pi) e^{ 2 (  \min_{D} w-2\log 2     )     }.
		\end{align*}
		We get a contradiction.
	\end{proof}
	
	We observe that for $g=e^{2u}g_{\euc}\in\mathcal{M}(D \backslash \{0\})$ satisfying
	\begin{align*}
	|\mathbb{K}_{g}|(D \backslash \{0\})+\int_{D} e^{2u}<\infty,
	\end{align*}
	then by the statement (1) of Lemma \ref{lemma removable singularity}, we can extend $g \in \mathcal{M}(D)$. 
	
	For  $g=e^{2u}g_{\euc}\in\mathcal{M}(\R^2\backslash D)$ satisfying
	\begin{align*}
	|\K_g|(\R^2\backslash D)+\int_{ \R^2\backslash D } e^{2u}<\infty,
	\end{align*}
	we set 
	\begin{align}
	\label{transformation}
	x'=\frac{x}{|x|^2}, \quad u'(x')=u(\frac{x'}{|x'|^2})-2\log|x'|,
	\end{align}
	then $g'=e^{2u'}g_{\euc}\in\mathcal{M}(D\backslash\{0\})$, and 
	\begin{align*}
	|\K_{g'}|(D\backslash\{0\})+\int_{D \backslash \{0\}} e^{2u'}=|\K_{g}|(\R^2\backslash D)+\int_{ \R^2\backslash D } e^{2u}<\infty. 
	\end{align*}
	Then by (1) in Lemma \ref{lemma removable singularity} again, we can extend $g' \in \mathcal{M}(D)$. We introduce	
	
	\begin{defi}
		\label{def residue}\textnormal{
			(1)
			For $g=e^{2u}g_{\euc}\in\mathcal{M}(D)$, the {\it residue} of $g$ (or $u$) at $0$ is 
			$$
			\res(g,0)=\res(u,0)=-\frac{1}{2\pi}\K_g(\{0\}).
			$$
		}
		\textnormal{(2) For $g=e^{2u}g_{\euc}\in\mathcal{M}(D \backslash \{0\})$ satisfying
			\begin{align*}
			|\mathbb{K}_{g}|(D \backslash \{0\})+\int_{D} e^{2u}<\infty,
			\end{align*}
			the residue of $g$ (or $u$) at $0$ is 
			$$
			\res(g,0)=\res(u,0)=-\frac{1}{2\pi}\K_g(\{0\}).
			$$
		}
		\textnormal{ (3) 
			For  $g=e^{2u}g_{\euc}\in\mathcal{M}(\R^2\backslash D)$ satisfying
			\begin{align*}
			|\K_g|(\R^2\backslash D)+\int_{ \R^2\backslash D } e^{2u}<\infty,
			\end{align*}
			the residue of $g$ (or $u$) at $\infty$ is 			
			\begin{align*}
			\res(g,\infty)=\res(u,\infty)=-\frac{1}{2\pi}\K_{g'}(\{0\})
			\end{align*}
			where for $g'=e^{2u'}g_{\euc}\in\mathcal{M}(D\backslash\{0\})$ with $u'$ is as in (\ref{transformation}) is extended in $\mathcal{M}(D)$. 
		}
	\end{defi}
	Since
	$$
	\lim_{a.e.\, r\rightarrow 0}\int_{\partial D_r}\frac{\partial  u'}{\partial r}=2\pi\res(g',0)=2\pi\res(u,\infty),
	$$
	then by direct calculations,
	$$
	\lim_{a.e.\, r\rightarrow 0}\int_{\partial D_\frac{1}{r}}\frac{\partial u}{\partial r}=-2\pi(2+\res(u,\infty)).
	$$

	\begin{pro}
		\label{prop non-positive on neck} 
		Let $g_k=e^{2u_k}g_{\euc}\in\mathcal{M}(D)$ with
		\begin{align*}
		|\K_{g_k}|(D)+\Area(D,g_k)\leq\Lambda.
		\end{align*}
		Assume that $u_k$ converges to $u$ weakly in $W^{1,p}(D)$ for some $p\in[1,2)$
		and $\{u_k\}$ has a blowup sequence $\{(x_k,r_k)\}$ at $0$ with the corresponding bubble $u'$. Then there exists $t_i\rightarrow 0$, such that
		\begin{align*}
		\lim_{i\rightarrow\infty}\lim_{k\rightarrow\infty}\K_{g_k}(D_{t_i}(0)\backslash D_{ \frac{r_k}{t_i}}  (x_k))=-2 \pi \left(2+\Res(u,0)+\Res(u',\infty)\right).
		\end{align*}
	\end{pro}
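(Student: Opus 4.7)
The plan is to apply the Gauss--Bonnet identity \eqref{Gauss-Bonnet} to the annular region $D_{t_i}(0)\setminus D_{r_k/t_i}(x_k)$. For a.e.\ $t_i\in(0,1)$ the formula yields
\begin{equation*}
\K_{g_k}\bigl(D_{t_i}(0)\setminus D_{r_k/t_i}(x_k)\bigr)
=-\int_{\partial D_{t_i}(0)}\frac{\partial u_k}{\partial r}
+\int_{\partial D_{r_k/t_i}(x_k)}\frac{\partial u_k}{\partial \rho},
\end{equation*}
where $r=|x|$ is the radial coordinate at $0$ and $\rho=|x-x_k|$ is the radial coordinate at $x_k$. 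The two boundary integrals will be shown to furnish, in the iterated limit, the $\Res(u,0)$- and $\Res(u',\infty)$-contributions respectively.

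\textbf{Outer boundary.} Using $u_k\rightharpoonup u$ in $W^{1,p}(D)$ together with $|\K_{g_k}|(D)\le\Lambda$, \eqref{convergence.u_r} supplies a full-measure set $\mathcal{A}\subset(0,1)$ on which $\int_{\partial D_t(0)}\partial u_k/\partial r\to\int_{\partial D_t(0)}\partial u/\partial r$ as $k\to\infty$. Applying \eqref{Gauss-Bonnet.at.singularity1} to $u$,
\begin{equation*}
\lim_{t\to0,\,t\in\mathcal{A}}\int_{\partial D_t(0)}\frac{\partial u}{\partial r}=-\K_g(\{0\})=2\pi\Res(u,0).
\end{equation*}

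\textbf{Inner boundary after rescaling.} Set $v_k(y)=u_k(x_k+r_ky)+\log r_k$, so that $v_k\rightharpoonup u'$ in $\cap_{p\in[1,2)}W^{1,p}_{\loc}(\R^2)$. A direct change of variables based on $(\partial u_k/\partial\rho)(x)=r_k^{-1}(\partial v_k/\partial s)(y)$ at $x=x_k+r_k y$ and on $dl_x=r_k\,dl_y$ gives
\begin{equation*}
\int_{\partial D_{r_k/t_i}(x_k)}\frac{\partial u_k}{\partial\rho}=\int_{\partial D_{1/t_i}(0)}\frac{\partial v_k}{\partial s}.
\end{equation*}
By scale-invariance, $|\K_{e^{2v_k}g_{\euc}}|$ and $\Area(\,\cdot\,,e^{2v_k}g_{\euc})$ are uniformly bounded on every fixed disk $D_R(0)\subset\R^2$; hence \eqref{convergence.u_r} applies to $\{v_k\}$ on each such $D_R$, yielding a full-measure set $\mathcal{B}\subset(0,\infty)$ on which $\int_{\partial D_s(0)}\partial v_k/\partial s\to\int_{\partial D_s(0)}\partial u'/\partial s$. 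Then, as $1/t_i\to\infty$, the identity stated just above the proposition gives $\int_{\partial D_{1/t_i}(0)}\partial u'/\partial s\to-2\pi(2+\Res(u',\infty))$.

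\textbf{Conclusion and main obstacle.} Select $t_i\to 0$ in the intersection $\mathcal{A}\cap\{t:1/t\in\mathcal{B}\}\cap\{t:\K_g(\partial D_t(0))=0\text{ and }\K_{e^{2u'}g_{\euc}}(\partial D_{1/t}(0))=0\}$; this is still conull in a neighbourhood of $0$, so a diagonal choice exists. Sending $k\to\infty$ first and then $i\to\infty$,
\begin{equation*}
-2\pi\Res(u,0)-2\pi\bigl(2+\Res(u',\infty)\bigr)=-2\pi\bigl(2+\Res(u,0)+\Res(u',\infty)\bigr),
\end{equation*}
as required. The main technical point is the justification of the iterated limit: one must pick $t_i$ along which Gauss--Bonnet is valid for each $g_k$ on the annulus (so the circles carry no curvature mass for $g_k$, $g$, or $e^{2u'}g_{\euc}$) and along which the rescaled boundary convergence $\int_{\partial D_{1/t_i}(0)}\partial v_k/\partial s\to\int_{\partial D_{1/t_i}(0)}\partial u'/\partial s$ holds. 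Both requirements reduce to intersecting countably many conull sets in $(0,\infty)$, so no new analytical input beyond the preliminaries is needed.
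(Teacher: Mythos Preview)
Your proposal is correct and follows essentially the same route as the paper's own proof: apply the Gauss--Bonnet identity on the annulus, handle the outer circle via $u_k\rightharpoonup u$ and \eqref{Gauss-Bonnet.at.singularity1}, rescale the inner circle to $v_k$ and use $v_k\rightharpoonup u'$ together with the residue-at-infinity formula, then intersect the relevant conull sets to pick the sequence $t_i\to 0$. If anything, you are slightly more explicit than the paper about the measure-theoretic bookkeeping needed to justify the choice of $t_i$.
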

	
	\begin{proof}
		We may assume
		$$
		\lim_{r\rightarrow 0,r\in\mathcal{A}}\int_{\partial D_r}\frac{\partial u}{\partial r}=2\pi\res(u,0),
		$$
		and
		$$
		\lim_{r\rightarrow 0,r\in\mathcal{A}}\int_{\partial D_\frac{1}{r}}\frac{\partial u'}{\partial r}=-2\pi(2+\res(u',\infty)),
		$$
		where  $\mathcal{A}$ is the set chosen as in subsection 2.2.
		By \eqref{convergence.u_r}, we can find $t_i\rightarrow 0$, such that $t_i \in\mathcal{A}$ 
		and 
		$$
		\lim_{k\rightarrow\infty}\int_{\partial D_{t_i}(0)}\frac{\partial u_k}{\partial r}=\int_{\partial D_{t_i}(0)}\frac{\partial u}{\partial r} \s \textnormal{and}\s\lim_{k\rightarrow\infty}\int_{\partial D_{\frac{r_k}{t_i}} (x_k)}\frac{\partial u_k}{\partial r}=\int_{\partial D_{\frac{1}{t_i}}(0)}\frac{\partial u'}{\partial r}.
		$$
		Then 
		\begin{align*}
		\lim_{i\rightarrow\infty}\lim_{k\rightarrow\infty}\K_{g_k}(D_{t_i}(0)\backslash D_{\frac{r_k}{t_i}}(x_k))=&\lim_{i\rightarrow\infty}\Big(\int_{\partial D_{\frac{1}{t_i}}(0)}\frac{\partial u'}{\partial r}-\int_{\partial D_{t_i}(0)}\frac{\partial u}{\partial r}\Big)\\
		=&-2 \pi \big(2+\res(u,0)+\res(u',\infty)\big).
		\end{align*}
	\end{proof}

	\section{Area identity}
	
	The objective of this section is to analyze the area in various regions arose in a blowup procedure. The main observation is the conservation of area in the limiting process when curvatures are uniformly bounded away from zero. 
	
	Throughout this section, we assume $g_k=e^{2u_{k}} g_{\euc} \in \mathcal{M}(D)$ satisfying: 
	
	\begin{itemize}
		\item[(P1)] \( \K_{g_k} = f_k dV_{g_k} + \lambda_k \delta_0 \) (where \( \lambda_k \) might be 0) with \( f_k \geq 1 \) or \( f_k \leq -1 \);
		\item[(P2)] \( |\K_{g_k}|(D) + \Area(D,g_k) \leq  \Lambda_{1} \);
		\item[(P3)] \( r^{-1} \|\nabla u_{k}\|_{L^{1}(D_r(x))} \leq \Lambda_{2} \), for all \( D_r(x) \subset D \).
	\end{itemize}
	
	Let $c_{k}$ be the mean value of $u_{k}$ on $D_{\frac{1}{2}}$, then by the Poincaré and Sobolev inequalities and (P3),  after passing to a subsequence, $u_{k}-c_{k}$ converges weakly in $\cap_{p \in [1,2)} W^{1,p}(D_{\frac{1}{2}})$. By (P2) and Jensen's inequality, $c_{k}<+\infty$. 
	If $c_{k}$ is bounded below, we may assume  $u_{k}$ converges weakly in $\cap_{p \in [1,2)} W^{1,p}(D_{\frac{1}{2}})$ to  some $u$ and define $g_{\infty}=e^{2u} g_{\euc}$. If $c_{k} \rightarrow -\infty$, we define $\Area(D_{\frac{1}{2}},g_{\infty})=0$. 
	
	The main result in this section is the following area identity.
	
	\begin{theorem}
		\label{theorem area convergence of bubble tree}
		Let $g_{k} \in \mathcal{M}(D)$ satisfy conditions \textnormal{(P1)-(P3)}.
		We assume $\{u_{k}\}$ has finitely many bubbles $v_{1}, \cdots, v_{m}$, induced by blowup sequences $\{(x_{k}^{1},r_{k}^{1}) \}$, $\cdots, \{ (x_{k}^{m},r_{k}^{m})\}$ at the point $0$, respectively. and we further assume that $\{u_{k}\}$ has no bubbles at any point $x \in D \backslash \{0\}$. Then after passing to a subsequence, 
		\begin{align}
		\label{bubble tree area convergence}
		\lim_{k \rightarrow \infty} \Area( D_{{1}/{2}},g_{k})=\Area(D_{{1}/{2}},g_{\infty})+\sum_{i=1}^m  \Area(\mathbb{R}^{2},e^{2v_{i}}g_{\euc}).
		\end{align}
		We allow $m=0$, which means $\{u_{k}\}$ has no bubbles at $0$, and the sum term in 	(\ref{bubble tree area convergence}) vanishes for this case. 
	\end{theorem}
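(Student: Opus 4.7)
The plan is to proceed by induction on the maximum level $M$ of bubbles in the tree, with the following key reduction as the heart of the argument: since $|f_k|\geq 1$ pointwise, every area integral away from $\{0\}$ is dominated by a curvature-measure mass,
\[\Area(A,g_k)=\int_A e^{2u_k}\,dx\leq \int_A |f_k|e^{2u_k}\,dx = |\K_{g_k}|(A),\qquad A\subset D\setminus\{0\},\]
and, since $f_k$ has fixed sign, $|\K_{g_k}|(A)=|\K_{g_k}(A)|$ on such $A$, so signed-curvature estimates translate directly to area estimates.

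The base case $M=0$ is handled by a finite covering argument. Since there is no blowup sequence in $D$, the mass $|\K_{g_k}|$ does not concentrate away from the possible atom at $\{0\}$, so one can cover $\overline{D_{1/2}}\setminus D_\delta$ (for $\delta>0$ small) by finitely many disks of fixed radius on which $|\K_{g_k}|<\epsilon_0<\pi$. Theorem \ref{thm convergence of solutions of Radon measure when measure is small} then gives on each disk the dichotomy $e^{2u_k}\to e^{2u}$ or $e^{2u_k}\to 0$ in $L^q$, both of which yield area convergence; the $\lambda_k\delta_0$ atom contributes no area. The same covering, applied in the inductive step to $\overline{D_{1/2}}\setminus\bigcup_j D_\rho(x_k^j)$, gives
\[\lim_{\rho\to 0}\lim_{k\to\infty}\Area\Bigl(D_{1/2}\setminus\bigcup_j D_\rho(x_k^j),g_k\Bigr)=\Area(D_{1/2},g_\infty).\]

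For the inductive step, fix the level-$1$ blowup sequences $\{(x_k^j,r_k^j)\}_j$ and rescale each bubble disk via $u_k^j(y):=u_k(x_k^j+r_k^j y)+\log r_k^j$. The rescaled sequence preserves (P1)-(P3) (with constants possibly enlarged by a fixed factor), preserves area, and has at most $M-1$ levels of bubbles on any fixed $D_R$, since the deeper bubbles of $\{u_k\}$ lying on top of $(x_k^j,r_k^j)$ become bubbles of $\{u_k^j\}$ of one lower level, and $u_k^j\to v_j$ weakly. Applying the inductive hypothesis on $D_R$ yields
\[\lim_{R\to\infty}\lim_{k\to\infty}\Area\bigl(D_{Rr_k^j}(x_k^j),g_k\bigr)=\Area\bigl(\R^2,e^{2v_j}g_{\euc}\bigr)+\sum_{v_\ell\text{ above }v_j}\Area\bigl(\R^2,e^{2v_\ell}g_{\euc}\bigr).\]

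The main obstacle is showing that each neck contributes vanishing area:
\[\lim_{\rho\to 0}\lim_{R\to\infty}\lim_{k\to\infty}\Area\bigl(D_\rho(x_k^j)\setminus D_{Rr_k^j}(x_k^j),g_k\bigr)=0.\]
The sign reduction bounds this by $|\K_{g_k}(\text{neck})|$, and by choosing the neck boundaries in the regular set $\mathcal{A}$ of subsection 2.2 and applying Proposition \ref{prop non-positive on neck}, the iterated limit equals $2\pi\bigl|2+\res(u,x_\infty^j)+\res(v_j,\infty)\bigr|$. To force vanishing, we match two Gauss-Bonnet balances via \eqref{Gauss-Bonnet.disk}: the one on $D_\rho(x_k^j)$ for $g_k$ (whose limit identifies the outer flux with $2\pi\res(u,x_\infty^j)$ plus smooth boundary data) and the one on $D_R$ for the rescaled $u_k^j$ (whose limit, via the inductive hypothesis and the residue definition at $\infty$, identifies the inner flux with $-2\pi(2+\res(v_j,\infty))$). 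Subtracting and using that the induction hypothesis has already correctly accounted for the bubble and base areas leaves exactly the residue combination, which must therefore equal $0$; the neck area then vanishes. The ghost case, where the weak limit is taken along $u_k-c_k$ with $c_k\to-\infty$, is simpler: the extra factor $e^{2c_k}\to 0$ forces the corresponding area pieces to vanish without any residue bookkeeping, and the covering argument still produces $\Area(D_{1/2},g_\infty)=0$ consistently.
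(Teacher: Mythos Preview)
Your proposal has a genuine gap in the neck argument, and a related gap in the base case. Both stem from the same missing ingredient: the 3-circle lemma (Lemma~\ref{lemma 3 circle lemma}) and its consequence Lemma~\ref{lem area identity on on neck region}.

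\textbf{The neck argument is circular.} You bound the neck area by the neck curvature mass, which Proposition~\ref{prop non-positive on neck} identifies as $2\pi\,|2+\res(u,0)+\res(v_j,\infty)|$, and then claim this vanishes by ``Gauss--Bonnet balances plus the induction hypothesis''. But unwinding the definitions shows
\[
-2\pi\res(u,0)=\K_{g_u}(\{0\})=\lim_{\rho\to 0}\lim_{k}\K_{g_k}(D_\rho),\qquad
2\pi\bigl(2+\res(v_j,\infty)\bigr)=\K_{e^{2v_j}g_{\euc}}(\R^2)=\lim_{R\to\infty}\lim_{k}\K_{g_k}(D_{Rr_k}(x_k)),
\]
so the residue combination you want to kill is \emph{exactly} the difference
\[
\lim_{\rho\to 0}\lim_{R\to\infty}\lim_k\K_{g_k}\bigl(D_\rho\setminus D_{Rr_k}(x_k)\bigr),
\]
i.e.\ the neck curvature itself. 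Showing this vanishes is precisely the task, not something the inductive area accounting delivers: the induction hypothesis controls \emph{areas} of the base and bubble regions, not the curvature hidden in the neck, and you have no a priori value for the total area of $D_{1/2}$ to subtract from. The paper does \emph{not} argue via residues here; it proves neck area vanishes directly through the 3-circle decay (Lemma~\ref{lem area identity on on neck region}), and the residue identity is then a corollary, not a tool.

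\textbf{The base case over-claims.} You assert that absence of bubbles forces $|\K_{g_k}|$ not to concentrate away from~$0$. This is false: concentration can produce ghost bubbles (rescalings with $c_k\to-\infty$) rather than genuine bubbles, so your small-mass covering need not exist. What is true---and nontrivial---is that absence of bubbles forces no \emph{area} concentration; this is Proposition~\ref{pro no area concentration at zero on D}, whose proof again rests on the 3-circle lemma. Your sign inequality $\Area\le|\K|$ goes the wrong way to bypass this.

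In short, the observation $\Area\le|\K|$ is correct but not strong enough to replace the 3-circle analysis; that lemma is the engine that makes both the base case and the neck estimate work in the paper.
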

	
	Before proving Theorem \ref{theorem area convergence of bubble tree}, we will derive several results that will be employed in our proof. We start from a {\it 3-circle lemma} on a cylinder:  
	
	\begin{lem}\label{lemma 3 circle lemma}
		Let $\Lambda, \kappa$ and $L>1$ be positive constants. Suppose 
		\begin{align}
		\label{3cir1}
		\|\nabla u \|_{ L^{1}(S^{1} \times [t,t+1])   } < \Lambda
		\end{align}
		for $a.e.$  $t \in [-L,4L-1]$, 
		and 
		\begin{align}
		\label{3cir2}
		\int_{ S^{1} \times \{t\}} \frac{ \partial u}{ \partial t }<-2 \pi \kappa<0 \quad \big( \text{or }  	\int_{ S^{1} \times \{t\}} \frac{ \partial u}{ \partial t }>2 \pi \kappa>0     \big).
		\end{align}
		We further assume $ L > \frac{16 \Lambda }{\kappa}$. Then there exists $\tau=\tau(\kappa,\Lambda)>0$ such that if 
		\begin{align*}
		| \mathbb{K}_{g}|(S^{1} \times [-L,4L]) < \tau, 
		\end{align*}
		then
		\begin{align}
		\label{3cir3}
		\Area( Q_{2},g )< e^{-\frac{ \kappa L}{2}  }	\Area(  Q_{1},g ) \quad \big(\text{or } \Area(Q_{1} ,g )< e^{-\frac{ \kappa L}{2}  } 	\Area(Q_{2} ,g) \big), 
		\end{align}
		where $Q_{i}=S^{1} \times [(i-1)L,iL]   ), i=1,2.$
	\end{lem}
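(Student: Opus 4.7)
The plan is to compare areas of $Q_1$ and $Q_2$ through the angular mean $\bar u(t):=\frac{1}{2\pi}\int_{S^1\times\{t\}}u\,d\theta$. Set $F(t):=\int_{S^1\times\{t\}}\partial_t u$, the flux, defined for a.e.\ $t$. The cylindrical Gauss--Bonnet identity of subsection~2.2 yields $F(t)-F(s)=-\K_g(S^1\times[s,t])$, so $F$ has total variation at most $\tau$; together with \eqref{3cir2} this forces $F(t)\le-\pi\kappa$ uniformly in $t$ once $\tau<\pi\kappa$. Moreover, Fubini applied to \eqref{3cir1} gives $\int_{-L}^{4L}|F(t)|\,dt\le 5L\Lambda$, so $|F|$ has mean $\le\Lambda$ on $[-L,4L]$ and hence $|F(t)|\le\Lambda+\tau$ pointwise. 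Since $\bar u'(t)=F(t)/(2\pi)$, the mean $\bar u$ is Lipschitz with $\bar u(t_2)-\bar u(t_1)\le-\tfrac{\kappa}{2}(t_2-t_1)$ whenever $t_1<t_2$.

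Next I would prove the key comparability on each unit substrip $C_{t_0}:=S^1\times[t_0,t_0+1]\subset S^1\times[-L,4L]$:
\[
c(\Lambda,\tau)\,e^{2\bar u(t_0)}\le\int_{C_{t_0}}e^{2u}\le C(\Lambda,\tau)\,e^{2\bar u(t_0)}.
\]
The lower bound is Jensen's inequality on $S^1$ together with the Lipschitz control of $\bar u$. For the upper bound, set $v:=u-\bar u(t_0)$, so $|{-}\Delta v|(C_{t_0})\le\tau$. Fubini applied to \eqref{3cir1} supplies some $s_0\in[t_0,t_0+\tfrac12]$ with $\|\partial_\theta u(\cdot,s_0)\|_{L^1(S^1)}\le 2\Lambda$; the 1D Poincar\'e inequality on $S^1$ combined with $|\bar u(s_0)-\bar u(t_0)|\le(\Lambda+\tau)/(4\pi)$ bounds $\|v(\cdot,s_0)\|_{L^1(S^1)}$, and propagating this in $t$ via the $L^1$-strip bound on $\partial_t v$ yields $\|v\|_{L^1(C_{t_0})}\le C(\Lambda,\tau)$. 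The Brezis--Merle estimate \eqref{Integral estimate of Imu}, applied to the Newtonian potential of $-\Delta v$ with the harmonic remainder controlled by its $L^1$ mean, then gives $\int_{C_{t_0}}e^{2v}\le C(\Lambda,\tau)$ provided $\tau$ is small enough that $2<4\pi/\tau$.

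Summing the unit-strip estimates as geometric series,
\[
\Area(Q_2,g)\le\sum_{k=0}^{L-1}\int_{S^1\times[L+k,L+k+1]}e^{2u}\le\frac{C}{1-e^{-\kappa}}\,e^{2\bar u(L)},
\]
while Jensen with $\bar u(t)\ge\bar u(L)+\tfrac{\kappa}{2}(L-t)$ for $t\le L$ gives
\[
\Area(Q_1,g)\ge 2\pi\int_0^L e^{2\bar u(t)}\,dt\ge\frac{2\pi}{\kappa}(e^{\kappa L}-1)\,e^{2\bar u(L)}.
\]
The ratio is therefore at most $C'(\Lambda,\kappa,\tau)\,e^{-\kappa L}$, which is $\le e^{-\kappa L/2}$ as soon as $L>16\Lambda/\kappa$ and $\tau=\tau(\kappa,\Lambda)$ is chosen small enough that $C'\le e^{\kappa L/2}$ already at $L=16\Lambda/\kappa$. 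The opposite-sign flux case is handled by the same argument with $\bar u$ now strictly increasing at rate $\kappa/2$, giving exponential growth of area in $t$ and hence $\Area(Q_1,g)\le e^{-\kappa L/2}\Area(Q_2,g)$.

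The main obstacle is the upper half of the unit-strip comparability. Only $L^1$ gradient data is available, so the classical Moser--Trudinger inequality is out of reach and one must lean on the small-mass Brezis--Merle estimate \eqref{Integral estimate of Imu}. Converting that estimate, stated for the Newtonian potential $I_\mu$, into a bound on $v=u-\bar u(t_0)$ requires a potential-plus-harmonic decomposition of $v$ together with quantitative $L^1$ control on both pieces; it is precisely here that the smallness $\tau=\tau(\kappa,\Lambda)<\pi\kappa$ is fixed so that the Brezis--Merle constant remains controllable and the final ratio falls below $e^{-\kappa L/2}$.
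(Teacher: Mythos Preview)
Your approach is genuinely different from the paper's: you argue directly via the flux and a Brezis--Merle upper bound on unit strips, while the paper argues by contradiction and compactness. In the paper, one supposes a sequence $g_k$ with $|\K_{g_k}|(Q)\to 0$ violates the area inequality; after subtracting means, $u_k-c_k$ converges (in $\cap_{q<2}W^{1,q}$) to an exactly \emph{harmonic} function $v=A+Bt+\tilde v$. The gradient of $v$ is itself harmonic, so mean value on a unit ball gives $|\nabla v|\le 4\Lambda/\pi$ on $S^1\times[0,3L]$, hence $|\tilde v|\le 4\Lambda$ and $B<-\kappa$. This produces the clean comparison $e^{-8\Lambda}\tilde g_\infty\le g_\infty\le e^{8\Lambda}\tilde g_\infty$ with $\tilde g_\infty=e^{2A+2Bt}(dt^2+d\theta^2)$, and direct integration gives $\Area(Q_2,g_\infty)/\Area(Q_1,g_\infty)\le e^{16\Lambda+2BL}\le e^{-\kappa L}$, contradicting the failure of \eqref{3cir3} via Theorem~\ref{thm convergence of solutions of Radon measure when measure is small}. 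The specific threshold $L>16\Lambda/\kappa$ is tailored to these constants.

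Your outline is plausible, but the final step has a gap. You write that the ratio $\le C'(\Lambda,\kappa,\tau)e^{-\kappa L}$ is $\le e^{-\kappa L/2}$ ``as soon as $L>16\Lambda/\kappa$ and $\tau$ is chosen small enough that $C'\le e^{\kappa L/2}$ already at $L=16\Lambda/\kappa$'', i.e.\ $C'\le e^{8\Lambda}$. But $C'$ does \emph{not} tend to $0$ as $\tau\to 0$: the harmonic remainder $h=v-I_\mu$ contributes $e^{2\|h\|_{L^\infty}}$ to $C$, and your bound on $\|h\|_{L^\infty}$ (via $L^1$ mean value on a buffered strip) is $c_0\Lambda$ for some $c_0$ coming from the Poincar\'e constant on $S^1$, the $t$-propagation, and the mean-value radius. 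You never verify $2c_0\le 8$; without that, your direct argument only yields the conclusion under a possibly larger threshold $L>C\Lambda/\kappa$, not the stated $L>16\Lambda/\kappa$. The paper avoids this bookkeeping entirely because the compactness limit is genuinely harmonic, so $\nabla v$ enjoys the sharp mean-value bound and the $4\Lambda$ oscillation drops out cleanly. A secondary looseness: \eqref{Integral estimate of Imu} is stated on $\R^2$, so applying it on the cylinder needs a chart or a cylinder Green's function, and the harmonic $L^\infty$ control requires the decomposition on a strictly larger strip than $C_{t_0}$, neither of which you make explicit.
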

	
	\begin{proof}
		Assume (\ref{3cir3}) is not true. Then we can find $ g_{k}=e^{2u_{k}}(dt^{2}+d \theta^{2}    )$, such that
		\begin{align*}
		| \mathbb{K}_{g_{k}}|(Q) \rightarrow 0 \text{ as } k \rightarrow \infty, \quad \Area( Q_{2},g_{k} )\geq e^{-\frac{ \kappa L}{2}  }	\Area(  Q_{1},g_{k} ) \quad,
		\end{align*}
		where we set $ Q=S^{1} \times [-L,4L]$. 
		
		Let $c_{k}$ be the mean value of $u_{k}$ over $Q$. Then, by the Poincaré inequality
		and the Sobolev embedding theorem,  $u_{k}-c_{k}$ converges weakly to a harmonic function $v$ in $\cap_{q \in [1,2)} W^{1,q}(Q)$. Since $\nabla v$ is also harmonic function in $Q$, then by the mean value theorem and (\ref{3cir1}), there exists $t$ such that
		\begin{align}
		\max_{ S^{1} \times [0,3L]    } |\nabla v| \leq \frac{1}{|D_{\frac{1}{2}}|  } \|\nabla v\|_{  L^{1}( S^{1} \times [t,t+1]  )   } 
		\leq  \frac{4}{\pi} \lim_{k \rightarrow \infty} \| \nabla (u_{k}-c_{k})\|_{  L^{1}( S^{1} \times [t,t+1]  )   }  \leq \frac{4\Lambda}{\pi}	\label{3cir4}. 
		\end{align}
		
		The harmonic function $v$ can be expanded as 
		\begin{align*}
		v(t,\theta)=A+Bt+\sum_{k=1}^{\infty} ( a_{k}(t) \cos(k \theta)+ b_{k}(t) \sin (k \theta)     ):=A+Bt+\tilde{v}(t,\theta). 
		\end{align*}
		By direct calculations, 
		\begin{align*}
		A+Bt=\frac{1}{2\pi}\int_{ S^{1} \times \{t\}} v , \quad B=\frac{1}{2 \pi} \int_{ S^{1} \times \{t\}} \frac{\partial v}{\partial t}. 
		\end{align*}
		By (\ref{3cir4}), for any $t \in [0,3L]$,  $ |B| \leq \frac{4 \Lambda }{\pi}$ and 
		\begin{align*}
		|\tilde{v}(t,\theta)|&=\left|v(t,\theta)-A-Bt\right|=\left|v(t,\theta)-\frac{1}{2 \pi } \int_{ S^{1} \times \{t\}} v\right|\\
		&=\left|\frac{1}{2 \pi} \int_{\theta-\pi}^{\theta+\pi} (  v(t,\theta)-v(t,\phi)   ) d \phi\right|\\
		& \leq \pi \max_{ \theta} \left|  \frac{\partial v}{\partial \theta}\right| \leq 4 \Lambda. 
		\end{align*}
		In particular, by (\ref{3cir2}), 
		\begin{align*}
		B=\frac{1}{2 \pi}  \int_{S^{1} \times [0,1]} \frac{\partial v}{\partial t}=\lim_{k \rightarrow \infty} \frac{1}{2 \pi} \int_{S^{1} \times [0,1]} \frac{\partial (u_{k}-c_{k})}{\partial t}<-\kappa. 
		\end{align*}
		
		Set $ g_{\infty}=e^{2v}( dt^{2}+d \theta^{2}  )$ and $ \tilde{g}_{\infty}=e^{2A+2Bt}( dt^{2}+d \theta^{2}  )$, so $ g_{\infty}=e^{2 \tilde{v}} \tilde{g}_{\infty}$. Then 
		\begin{align*}
		e^{-8 \Lambda} \tilde{g}_{\infty} \leq  g_{\infty} \leq e^{8 \Lambda} \tilde{g}_{\infty} . 
		\end{align*}
		It follows 
		\begin{align*}
		\Area( Q_{i}, \tilde{g}_{\infty}    )& =\int_{0}^{2 \pi } \int_{(i-1)L}^{iL} e^{2A+2Bt} dt d \theta \\
		& =2 \pi e^{2A} \cdot \frac{1}{2B} e^{2Bt}|_{(i-1)L }^{iL} \\
		&=\frac{\pi}{B} e^{ 2A+2B(i-1)L     }(   e^{2BL}-1    ), 
		\end{align*}
		which implies 
		\begin{align*}
		\frac{\Area( Q_{2}, g_{\infty} )}{ \Area(Q_{1}, g_{\infty})  } \leq e^{16 \Lambda } 	\frac{\Area( Q_{2}, \tilde{g}_{\infty} )}{ \Area(Q_{1}, \tilde{g}_{\infty}  )} \leq e^{16 \Lambda + 2BL} \leq e^{16 \Lambda - 2 \kappa L} \leq e^{-\kappa L}. 
		\end{align*}
		On the other hand, by Theorem \ref{thm convergence of solutions of Radon measure when measure is small},
		\begin{align*}
		e^{-\frac{\kappa L}{2}} \leq \frac{ \Area(Q_{2},g_{k})}{ \Area(Q_{1},g_{k})  }=\frac{ \Area(Q_{2},e^{-2c_{k}}g_{k})}{\Area(Q_{1},e^{-2c_{k}}g_{k})  } \rightarrow \frac{\Area( Q_{2}, g_{\infty} )}{ \Area(Q_{1}, g_{\infty})  }, 
		\end{align*}
		which leads to a contradiction. 
	\end{proof}
	
	Assume that there exist
	$\Lambda_{1}, \Lambda_{2}>0$ such that
	$g=e^{2u}g_{\euc}  \in \mathcal{M}(D)$ satisfies  
	\begin{align*}
	|\mathbb{K}_{g}|(D) \leq \Lambda_{1} \s \textnormal{and} \s
	r^{-1} \|\nabla u\|_{ L^{1}(D_{r}(x))   } \leq \Lambda_{2}, \text{ for all } D_{r}(x) \subset D.
	\end{align*}
	Let $t=-\log r \in (0,\infty)$. Then 
	\begin{align*}
	e^{2u(r,\theta)} (dr^{2}+r^{2} d \theta^{2})=e^{  2u( e^{-t},\theta)} r^{2} \left( dt^{2}+d \theta^{2}\right)=e^{ 2u( e^{-t},\theta) -2t}( dt^{2}+d \theta^{2}  ). 
	\end{align*}
	Set 
	\begin{align}
	\label{uk coordinte to vk}
	v(t,\theta)=u(  e^{-t},\theta)-t. 
	\end{align} 
	Then
	\begin{align*}
	\int_{ S^{1} \times \{ t_{0}\} }& \frac{ \partial v}{\partial t}= \int_{0}^{2 \pi } \frac{ \partial (u(  e^{-t},\theta   )-t)}{\partial t} |_{t=t_{0}} d \theta=\int_{0}^{2 \pi } \left(-\frac{\partial u( e^{-t_{0}}, \theta  )}{\partial r} e^{-t_{0}} -1 \right) d \theta \\
	&= -\int_{ \partial D_{ e^{-t_{0}}  }} \frac{\partial u}{\partial r} -2 \pi
	\end{align*}
	and 
	\begin{align}
	\|\nabla v \|_{ L^{1} ( S^{1} \times [t_{0},t_{0}+1]   )   } & \leq \int_{ 0}^{2 \pi } \int_{t_{0}}^{t_{0}+1 } \left( \left|\frac{\partial u}{\partial r}\right| e^{-t}+\left|\frac{\partial u}{\partial \theta}\right| \right) dt d \theta+2 \pi \nonumber\\
	&= \int_{0}^{2 \pi } \int_{ e^{-t_{0}-1}}^{e^{-t_{0}}} \left( \left|\frac{\partial u}{\partial r}\right|+\frac{1}{r} \left|\frac{\partial u}{\partial \theta}\right| \right) drd \theta +2 \pi \nonumber\\
	&\leq  \frac{1}{ e^{-t_{0}-1}  } \int_{D_{e^{-t_{0}}} \backslash D_{e^{-t_{0}-1}   }}  |\nabla u| +2 \pi \nonumber\\
	&\leq \frac{1}{ e^{-t_{0}-1}  } \int_{D_{e^{-t_{0}}}}  |\nabla u| +2 \pi \nonumber \\
	&\leq C \Lambda_{2} +2 \pi:=\Lambda_{3}. \label{finite gradient under different coordinates}
	\end{align}
	Clearly 		
	$$
	|\mathbb{K}_{g}|(  S^{1} \times [t_{0},t_{0}+1]   ) =|\mathbb{K}_{g}|(   D_{e^{-t_{0}}   } \backslash D_{e^{-t_{0}-1}   } ).
	$$

	Using Lemma \ref{lemma 3 circle lemma}, the above analysis leads to estimates on ratio of areas on consecutive annuli, phrased in cylindrical coordinates: 
	\begin{cor}\label{remark 3 circle on annulus}	
		Let $g=e^{2u} g_{\euc} \in \mathcal{M}(D)$ which satisfies \textnormal{(P2)-(P3)}. 
		Let $\kappa>0$ be a constant and $L >\frac{16  \Lambda_{3}}{\kappa}$, where $\Lambda_{3}$ is chosen as in (\ref{finite gradient under different coordinates}). 
		If for a.e. $t \in [ L,6L-1]$, 
		\begin{align*}
		-\int_{\partial D_{e^{-t}  }} \frac{\partial u}{\partial r}-2 \pi <-2\pi \kappa<0 \quad (\text{or }  -\int_{\partial D_{e^{-t}  }} \frac{\partial u}{\partial r}-2 \pi >2\pi \kappa>0     ). 
		\end{align*}
		Then there exists $\tau=\tau( \Lambda_{1},\Lambda_{2},\kappa )>0$ such that if
		\begin{align*}
		| \mathbb{K}_{g}|(  D_{ e^{-L}    }  \backslash  D_{ e^{-6L}    }     )< \tau, 
		\end{align*}
		then
		\begin{align*}
		&\Area( D_{ e^{-3L} } \backslash D_{ e^{-4L} } ,g ) < e^{  -\frac{\kappa L }{2}   } \Area( D_{ e^{-2L} } \backslash D_{ e^{-3L} } ,g )\\
		(\text{or }  &\Area( D_{ e^{-3L} } \backslash D_{ e^{-4L} } ,g  ) > e^{  \frac{\kappa L }{2}   } \Area( D_{ e^{-2L} } \backslash D_{ e^{-3L} } ,g  )   ). 
		\end{align*}
	\end{cor}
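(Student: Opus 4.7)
The plan is to reduce the annular statement to the cylindrical statement of Lemma \ref{lemma 3 circle lemma} via the coordinate change $t=-\log r$, after which the result becomes a direct application of that lemma to a translated copy of the function $v$ defined in \eqref{uk coordinte to vk}.

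First, I would use the calculation already carried out in the excerpt: setting $v(t,\theta)=u(e^{-t},\theta)-t$, the metric $g=e^{2u}g_{\euc}$ pulls back under $r=e^{-t}$ to $e^{2v}(dt^{2}+d\theta^{2})$ on $S^{1}\times[0,\infty)$. Consequently areas are preserved, i.e. $\Area(D_{e^{-a}}\setminus D_{e^{-b}},g)=\Area(S^{1}\times[a,b],e^{2v}(dt^{2}+d\theta^{2}))$, and the same is true of the absolute curvature measure on matching regions. The excerpt already records the two key identities that I will use: $\int_{S^{1}\times\{t\}}\frac{\partial v}{\partial t}=-\int_{\partial D_{e^{-t}}}\frac{\partial u}{\partial r}-2\pi$, and $\|\nabla v\|_{L^{1}(S^{1}\times[t_{0},t_{0}+1])}\leq \Lambda_{3}$ for every $t_{0}\geq 0$, with $\Lambda_{3}=C\Lambda_{2}+2\pi$ obtained from hypothesis (P3).

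Next, I would translate by $2L$: define $\tilde v(s,\theta)=v(s+2L,\theta)$ on $S^{1}\times[-L,4L]$, and set $\tilde g=e^{2\tilde v}(ds^{2}+d\theta^{2})$. Under the correspondence $s=t-2L$, the strip $S^{1}\times[-L,4L]$ matches $S^{1}\times[L,6L]$ in $v$-coordinates, which matches the annulus $D_{e^{-L}}\setminus D_{e^{-6L}}$; the block $Q_{1}=S^{1}\times[0,L]$ matches $D_{e^{-2L}}\setminus D_{e^{-3L}}$, and $Q_{2}=S^{1}\times[L,2L]$ matches $D_{e^{-3L}}\setminus D_{e^{-4L}}$. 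In particular, the interval $s\in[-L,4L-1]$ required in Lemma \ref{lemma 3 circle lemma} translates back to $t\in[L,6L-1]$, which is exactly the range over which the hypothesis on $-\int_{\partial D_{e^{-t}}}\frac{\partial u}{\partial r}-2\pi$ is assumed. Using the identity for $\int_{S^{1}\times\{s\}}\frac{\partial\tilde v}{\partial s}$, this hypothesis translates verbatim to the sign condition \eqref{3cir2} on $\tilde v$ with the same constant $\kappa$; and the gradient bound \eqref{3cir1} holds with $\Lambda=\Lambda_{3}$.

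Finally, I would choose $\tau=\tau(\kappa,\Lambda_{3})=\tau(\Lambda_{1},\Lambda_{2},\kappa)$ to be the constant supplied by Lemma \ref{lemma 3 circle lemma}; the assumption $L>16\Lambda_{3}/\kappa$ guarantees the size requirement there, and the bound $|\mathbb{K}_{g}|(D_{e^{-L}}\setminus D_{e^{-6L}})<\tau$ translates to $|\mathbb{K}_{\tilde g}|(S^{1}\times[-L,4L])<\tau$. Lemma \ref{lemma 3 circle lemma} then yields $\Area(Q_{2},\tilde g)<e^{-\kappa L/2}\Area(Q_{1},\tilde g)$, which, transported back through the coordinate change, is the desired inequality $\Area(D_{e^{-3L}}\setminus D_{e^{-4L}},g)<e^{-\kappa L/2}\Area(D_{e^{-2L}}\setminus D_{e^{-3L}},g)$. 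The opposite-sign case is handled identically by invoking the parenthetical alternative in Lemma \ref{lemma 3 circle lemma}. There is no serious obstacle here: the content is entirely bookkeeping in the change of variables $t=-\log r$, and the only point that deserves care is the alignment of the interval $[L,6L-1]$ with $[-L,4L-1]$, which forces the choice of the shift by $2L$.
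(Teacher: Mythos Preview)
Your proposal is correct and matches the paper's approach exactly: the paper presents this corollary as an immediate consequence of the preceding coordinate-change computations (yielding the identities for $\int_{S^{1}\times\{t\}}\partial v/\partial t$ and the bound $\|\nabla v\|_{L^{1}(S^{1}\times[t_{0},t_{0}+1])}\leq\Lambda_{3}$) together with Lemma~\ref{lemma 3 circle lemma}, and your shift by $2L$ is precisely the bookkeeping needed to line up the intervals.
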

	
	Applying the above 3-circle lemma, we can deduce the following criterion for vanishing of area in  the neck region during the blowup.
	\begin{lem}
		\label{lem area identity on on neck region}
		Let $g_{k}=e^{2u_{k}} g_{\euc} \in \mathcal{M}(D)$ which satisfies \textnormal{(P1)-(P3)} with $\lambda_k=0$. 
		We assume $x_{k} \rightarrow 0, r_{k} \rightarrow 0$ and 
		\begin{align}
		\label{area identity neck sup}
		\lim_{\rho \rightarrow 0} \lim_{k \rightarrow \infty} \sup_{ r \in [\frac{r_{k}}{\rho}, \rho]}\Area( D_{r}(x_{k}) \backslash D_{ \frac{r}{e} }(x_{k}), g_{k}    )=0.
		\end{align}
		Then
		\begin{align}
		\label{area identity neck whole}
		\lim_{\rho \rightarrow 0} \lim_{k \rightarrow \infty} \Area(D_{\rho}(x_{k}) \backslash D_{\frac{r_{k}}{\rho} }(x_{k}), g_{k})=0. 
		\end{align}
	\end{lem}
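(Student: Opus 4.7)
The plan is to pass to cylindrical coordinates centered at $x_k$: set $v_k(t,\theta) = u_k\bigl(x_k+e^{-t}(\cos\theta,\sin\theta)\bigr) - t$, so that $g_k = e^{2v_k}(dt^2 + d\theta^2)$ on the cylindrical neck $S^1\times[T_1,T_2]$ with $T_1=-\log\rho$ and $T_2=-\log(r_k/\rho)$, whose length $T_2-T_1 = 2\log\rho-\log r_k$ tends to infinity. Hypothesis \eqref{area identity neck sup} then reads: every unit cylinder $S^1\times[s,s+1]$ in the neck has area at most $\epsilon=\epsilon(\rho,k)$, a quantity tending to zero in the iterated limit. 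The key quantity is the flux $b(t) := \int_{S^1\times\{t\}}\partial v_k/\partial t$, which by Gauss--Bonnet equals $\K_{g_k}(D_{e^{-t}}(x_k))-2\pi$. Because $\lambda_k=0$ and there is no atom inside the disk, $b(t)\to-2\pi$ as $t\to\infty$; because (P1) fixes the sign of $\K_{g_k}$, $b$ is monotone in $t$.

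The engine of the proof is the corollary ``remark 3 circle on annulus''. Fix a small $\kappa>0$, let $L=L(\kappa,\Lambda_3)$ and $\tau=\tau(\kappa,\Lambda_1,\Lambda_2)$ be as in that corollary, and partition the neck into super-blocks of length $6L$. Call a super-block \emph{bad} if $|\K_{g_k}|$ on it exceeds $\tau$, and \emph{good} otherwise. By (P2) there are at most $\Lambda_1/\tau$ bad super-blocks, and by the unit-cylinder hypothesis each has area at most $6L\epsilon$, contributing $O(\epsilon)$ in total.

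When $f_k\le -1$, the flux satisfies $b(t)\le -2\pi<-2\pi\kappa$ throughout, so the 3-circle corollary applies in the ``decay with $t$'' direction on every good super-block, giving factor $e^{-\kappa L/2}$ decay of the area of successive $L$-sub-blocks. Iterating over maximal runs of consecutive good super-blocks, each run contributes a geometric series summing to $O(L\epsilon)$, and since there are at most $\Lambda_1/\tau+1$ such runs, the total good area is also $O(\epsilon)$. When $f_k\ge 1$, the monotonicity of $b$ splits the neck into three consecutive intervals, a growth region $\{b>2\pi\kappa\}$, a middle region $\{|b|\le 2\pi\kappa\}$, and a decay region $\{b<-2\pi\kappa\}$. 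On the middle region the net change of $b$ equals $\K_{g_k}$ of that annulus, namely $4\pi\kappa$, and $f_k\ge 1$ gives area $\le \K_{g_k}$ on any subset, so $\Area(\text{middle})\le 4\pi\kappa$. On the growth and decay regions $b$ stays away from zero with a fixed sign, and the chain-of-3-circles argument above gives $O(\epsilon)$ for their total area.

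Altogether the total neck area is bounded by $4\pi\kappa + C(L,\tau,\Lambda_1)\epsilon(\rho,k)$ (the first term absent when $f_k\le -1$). Given $\delta>0$, first choose $\kappa=\delta/(8\pi)$ to make the first term at most $\delta/2$, then take $\rho$ small and $k$ large so that the second term is at most $\delta/2$, yielding \eqref{area identity neck whole}. The main obstacle is making the 3-circle chaining rigorous near bad super-blocks: each application of the corollary uses a $5L$-window, so a handful of $L$-sub-blocks adjacent to each bad super-block cannot be iterated and must be absorbed into the direct $L\epsilon$ bound; but these ``lost'' sub-blocks are only $O(\Lambda_1/\tau)$ in number and contribute $O(\epsilon)$, so the conclusion is unaffected.
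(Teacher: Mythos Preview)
Your proposal is correct and follows essentially the same approach as the paper: pass to cylindrical coordinates, split the neck according to whether the flux $|b(t)|$ exceeds a threshold, bound the ``middle'' region via $|f_k|\ge 1$ and Gauss--Bonnet, and control the outer regions by chaining the 3-circle lemma across $L$-blocks while absorbing the boundedly many high-curvature blocks into the direct unit-cylinder bound. The only organizational difference is that you invoke the monotonicity of $b$ (a consequence of the sign-definite curvature) to obtain a clean growth/middle/decay trichotomy, whereas the paper instead defines the middle region as $[\alpha_k,\beta_k]$ with $\alpha_k=\inf\{t:|\Phi_{v_k}(t)|\le\delta\}$, $\beta_k=\sup\{t:|\Phi_{v_k}(t)|\le\delta\}$ and bounds its area by $|\Phi_{v_k}(\alpha_k)-\Phi_{v_k}(\beta_k)|\le 2\delta$ without appealing to monotonicity; the two formulations are equivalent here.
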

	
	\begin{proof} 
		By change of coordinates: $x_{k}+re^{i \theta} \rightarrow (\theta, T)=(\theta,-\log r)$, for any $r \leq {1}/{e}$ and large $k$, 
		we can view $g_{k}|_{ ( D_{ r}(x_{k}) \backslash D_{ \frac{r_{k}}{r}  }(x_{k}))}$ as a metric on $S^{1} \times [T,T_{k}-T ]$, where $T_{k}=-\log r_{k}$ and $g_{k}=e^{2v_{k}  }( d \theta^{2}+dt^{2})$ with $v_{k}$ as in (\ref{uk coordinte to vk}). 
		By (\ref{area identity neck sup}), we have
		\begin{align}
		\label{area identity tube sup}
		\lim\limits_{T \rightarrow \infty} \lim\limits_{k \rightarrow \infty} \sup\limits_{ t \in [T,T_{k}-T]} \Area( S^{1} \times [t,t+1], g_{k}  )=0.
		\end{align}
		
		Our goal is to show 
		\begin{align}
		\label{area identity tube whole}
		\lim_{T \rightarrow \infty} \lim_{k \rightarrow \infty}\Area( S^{1}\times [T,T_{k}-T] , g_{k})=0. 
		\end{align} 
		It suffices to show that for any fixed $\delta > 0$ there is $T_0$ such that for any large $k$, 
		\begin{align*}
		{\Area}(S^{1} \times [T_0, T_{k} - T_0], g_{k}) \leq 5\delta.
		\end{align*}
		
		Let $\Lambda_{3}$ and $\tau = \tau(\frac{\delta}{2\pi}, \Lambda_1, \Lambda_{2})$ be as in Corollary \ref{remark 3 circle on annulus} and select an $L > \frac{32\pi\Lambda_{3}}{\delta}$. 
		By (P1) and (P2), we can extend $\int_{ S^{1} \times \{t\} } \frac{\partial v_{k}}{\partial t}$ to a continuous function $\Phi_{v_{k}}(t)$ as in (\ref{continuous extension of phi}). 
		Now we divide the proof into two steps.
		
		\vspace{.1cm}
		\noindent{\bf Step 1:} $|\Phi_{v_{k}}(t)|>\delta$ for any $t$. 
		
		Then by the continuity of $\Phi_{v_k}$, either $\Phi_{v_k}<-\delta$ or $\Phi_{v_k}>\delta$. 
		Without loss of generality, we assume 
		$\Phi_{v_{k}}(t)<-\delta$. 
		By (P2), 
		$$
		\#\Big\{ i:|\K_{g_k}|(S^1\times[iL,(i+1)L])\geq\frac{\tau}{5}  \Big\}\leq \Big[ \frac{5 \Lambda_1}{\tau}\Big]+1:=\Lambda_{4}.
		$$
		
		Let $N$ be an integer such that
		\begin{align*}
		N \geq \max\Big\{  \frac{1}{1- e^{-16 \pi \Lambda_3  }  } , 2 \Big\}.
		\end{align*}
		
		By (\ref{area identity tube sup}), we may find $T_0$, such that for any sufficiently large $k$, 
		\begin{align}
		\label{choice of T0}
		\Area(S^1\times[t,t+L],g_{k})<\frac{\delta}{  16N \Lambda_{4} },\s \forall t\in[T_0,T_k-T_0].
		\end{align}

		We select integers $a_k,b_k$ such that 
		$$
		(a_k-1) L \leq T_{0} \leq a_kL \s \text{and}\s b_kL  \leq T_k-T_0 \leq (b_k+1) L
		$$ 
		and set
		$$
		\Big\{ i \in [a_{k},b_{k}-1]: |\K_{g_k}|(S^1\times[iL,(i+1)L])\geq\frac{\tau}{5}  \Big\}=\left\{  i_{1}^{k}, \cdots, i_{ m_{k}}^{k}    \right \},
		$$
		where $i_{1}^{k}<\cdots<i_{m_{k}}^{k}, m_{k} \leq \Lambda_{4}$. 
		Let $a_{k}=i_{0}^{k}+1, b_{k}=i_{m_{k}+1}^{k}$ and write
		$$
		[a_{k}L,b_{k}L]=\bigcup^{m_k}_{j=0}[(i^k_{j}+1)L, i^k_{j+1}L]\cup [ i^k_{j+1}L, (i^k_{j+1}+1)L]. 
		$$
		
		For $j \in \{ 0, 1,\cdots, m_{k} \}$, we estimate $\Area(S^{1} \times [ (i_{j}^{k} +1)L, i_{j+1}^{k}L     ] , g_{k}     ) $ as follows:\\
		\noindent \textbf{Case 1}: 
		if $i_{j+1}^{k}-i_{j}^{k} < 6$, then
		\begin{align*}
		\Area(S^{1} \times [ (i_{j}^{k} +1)L, i_{j+1}^{k}L     ] , g_{k}     ) \leq \frac{ 4\delta}{  16 N \Lambda_{4}  } \leq \frac{ \delta}{  8\Lambda_{4}}; 
		\end{align*}
		\noindent \textbf{Case 2}: if $i_{j+1}^{k}-i_{j}^{k} \geq  6$, then by Lemma \ref{lemma 3 circle lemma},
		\begin{align*}
		\Area&(  S^{1} \times [ (i_{j}^{k} +1)L, i_{j+1}^{k}L], g_{k}    ) \\
		&\leq  \frac{2 \delta}{ 16 N\Lambda_{4}}+\Area(S^{1} \times [ (i_{j}^k+2)L,(i_{j+1}^k-1)L ],g_{k}) \\
		&\leq  \frac{ \delta}{  8 N\Lambda_{4}}+\frac{1}{ 1-e^{ -\frac{\delta L}{2}}}\Area(S^{1} \times [(i_{j}^k+2)L,(i_{j}^k+3)L ],g_{k}) \\
		&\leq  \frac{ \delta}{  8 N\Lambda_{4}}+\frac{ \delta}{ 16 \Lambda_{4}} \leq \frac{ \delta}{  8\Lambda_{4}}. 
		\end{align*}
		
		Therefore, we obtain that
		\begin{align*}
		\Area( S^{1} \times [T_{0},T_{k}-T_{0}]  ,g_{k} ) 
		&\leq \Area( S^{1} \times [a_k L,b_k L]  ,g_{k} ) +\frac{2 \delta}{  16N\Lambda_{4}} \\
		&\leq  \frac{3 m_{k} \delta} {  8 \Lambda_{4}  }+\frac{\delta}{  8N\Lambda_{4}} <\delta. 
		\end{align*}
		
		\noindent{\bf Step 2:}
		If there exists some $t$ such that $|\Phi_{v_{k}}(t)|\leq \delta$, 
		we still select $T_{0}$ as in \textbf{Step 1} and
		set
		\begin{align*}
		\alpha_{k}=\inf\left\{ t\in[T_{0},T_k-T_{0}]: |\Phi_{v_{k}}(t)| \leq \delta \right\}, \, \beta_{k}=\sup\left\{ t\in[T_{0},T_k-T_{0}]: |\Phi_{v_{k}}(t)| \leq \delta \right\}. 
		\end{align*}
		By (P1), we know that $f_{k} \geq 1$ or $f_{k} \leq -1$, then 
		\begin{align*}
		\Area(S^{1} \times [\alpha_{k}, \beta_{k}], g_{k}) &\leq \Big|\int_{ S^{1} \times [\alpha_{k}, \beta_{k}]} f_{k } dV_{g_{k}}\Big|=\big|\K_{g_{k}}( S^{1} \times [\alpha_{k}, \beta_{k}], g_{k})\big|\\ 
		&=|\Phi_{v_{k}}(\alpha_k)-\Phi_{v_{k}}(\beta_k)|\leq 2 \delta. 
		\end{align*}
		Since $|\Phi_{v_{k}}(t)| > \delta $ on $[ T_{0}, \alpha_{k}-L ] \cup [\beta_{k}+L,T_{k}-T_{0}]$, then by \textbf{Step 1}, 
		for large $k$, 
		\begin{align*}
		\Area(S^1\times[T_0,\alpha_k-L],g_k)<\delta,\s \Area(S^1\times[\beta_k+L,T_k-T_0],g_{k})<\delta.
		\end{align*}
		By (\ref{choice of T0}), for large $k$, 
		$$
		\Area(S^1\times[\max\{ \alpha_k-L,T_{0} \},\alpha_k],g_k)+\Area(S^1\times[\beta_k, \beta_k+L],g_k)<\delta. 
		$$
		Then we obtain
		\begin{align*}
		\Area( S^{1} \times [T_{0},T_{k}-T_{0}] , g_{k}) \leq 5 \delta.
		\end{align*}
	\end{proof}
	
	\begin{rem}\label{K cannot change sign} \textnormal{
			Let $g_k$ be a metric on $D_{t_k}\backslash D_{r_k}$ such that $r_k\rightarrow 0$ and $t_k/r_k\rightarrow +\infty$ and
			$(D_{t_k}\backslash D_{r_k},g_k)$ is isometric to $\frac{1}{k}S^1\times[0,k]$. Then $K_{g_k}=0$
			and $\Area(D_{t_k}\backslash D_{r_k},g_k)=2\pi$, which does not converge to $0$. Hence, the assumption that $K_{g_k}\geq 1$ or $K_{g_k}<-1$  cannot be removed.	}	
	\end{rem}
	
	The following proposition 
	shows that if no bubble occurs then there is no area concentration point. 
	
	\begin{pro}
		\label{pro no area concentration at zero on D}
		Let $g_{k}=e^{2u_{k}} g_{\euc} \in \mathcal{M}(D)$ which satisfies \textnormal{(P1)-(P3)}.
		Assume that $\{u_{k}\}$ has no bubble. Then 
		\begin{align}
		\label{identity.disk}
		\lim_{r \rightarrow 0} \lim_{k \rightarrow \infty} \Area( D_{r}, g_{k})=0. 
		\end{align}
	\end{pro}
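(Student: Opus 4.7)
The plan is to argue by contradiction: I assume $\lim_{r\to 0}\lim_{k\to\infty}\Area(D_r,g_k)>0$ and extract a blowup sequence of $\{u_k\}$ whose rescaling converges weakly, contradicting the no-bubble hypothesis. Under the negation of the conclusion, I fix $\epsilon_0>0$ and, after passing to a subsequence, $r_k\to 0$ with $\Area(D_{r_k},g_k)\ge\epsilon_0$. Set $\tilde u_k(x)=u_k(r_kx)+\log r_k$ and $\tilde g_k=e^{2\tilde u_k}g_{\euc}$, so that $\Area(D_1,\tilde g_k)\ge\epsilon_0$; a direct computation shows \textnormal{(P1)-(P3)} persist for $\tilde g_k$ on $D_1$ with the same constants. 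Let $\tilde c_k=\avint_{D_{1/2}}\tilde u_k$. By \textnormal{(P3)} and the Poincar\'e-Sobolev inequality, $\tilde u_k-\tilde c_k$ is bounded in $W^{1,p}(D_{1/2})$ for every $p\in[1,2)$, and hence (along a further subsequence) converges weakly to some $v$. If $\tilde c_k$ had a bounded subsequence, $\tilde u_k$ itself would converge weakly, making $(0,r_k)$ a blowup sequence of $\{u_k\}$ at $0$---that is, a bubble---which contradicts the hypothesis. Therefore $\tilde c_k\to -\infty$.

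Next I would localize where area can concentrate. By \textnormal{(P2)}, the measures $|\K_{\tilde g_k}|$ are uniformly bounded and, along a subsequence, converge weakly-$*$ to a finite Radon measure $\nu$ on $\bar D_1$. The set $S:=\{p\in\bar D_1:\nu(\{p\})\ge\pi\}$ is finite. For every $p\in\bar D_1\setminus S$ there exists $\rho_p>0$ with $|\K_{\tilde g_k}|(D_{\rho_p}(p))<\pi$ for all large $k$, so Theorem \ref{thm convergence of solutions of Radon measure when measure is small} applies on $D_{\rho_p}(p)$; since $\tilde c_k\to-\infty$ rules out alternative (1), alternative (2) forces $e^{2\tilde u_k}\to 0$ in $L^q(D_{\rho_p/2}(p))$ for some $q>1$. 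Covering $\bar D_1\setminus\bigcup_{p\in S}D_\rho(p)$ by finitely many such disks yields $\lim_{\rho\to 0}\lim_{k\to\infty}\Area(D_1\setminus\bigcup_{p\in S}D_\rho(p),\tilde g_k)=0$. Combined with $\Area(D_1,\tilde g_k)\ge\epsilon_0$ and $|S|<\infty$, pigeonhole produces some $p^*\in S$ and $s_k\to 0$ with $\Area(D_{s_k}(p^*),\tilde g_k)\ge \epsilon_0/|S|$.

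Setting $v_k(y)=\tilde u_k(p^*+s_ky)+\log s_k=u_k(r_kp^*+r_ks_ky)+\log(r_ks_k)$ gives a new candidate blowup sequence $(r_kp^*,r_ks_k)$ of $\{u_k\}$ at $0$ with $\Area(D_1,e^{2v_k}g_{\euc})\ge\epsilon_0/|S|$. Re-applying the preceding dichotomy to $\{v_k\}$ either yields weak convergence (hence a bubble, contradicting the hypothesis) or a still finer concentration point, prompting another rescaling. The main obstacle is to show that this bubble-tree style iteration terminates in finitely many steps. Once successive concentration points avoid the origin, the atom $\lambda_k\delta_0$ of $\K_{g_k}$ drifts out of the rescaled domain, reducing to the atom-free situation; the finite area and curvature budgets supplied by \textnormal{(P2)} cap the total depth of nested ghost bubbles; and the bubble-tree structure of Section \ref{section.bubble.tree} together with the compactness results from \cite{C-L1,C-L2} rules out an infinite descending chain of essentially different blowup sequences. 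Termination then produces the forbidden weakly convergent rescaling---a genuine bubble---completing the contradiction and establishing \eqref{identity.disk}.
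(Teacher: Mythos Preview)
Your contradiction/iteration scheme has a genuine gap at exactly the point you flag as ``the main obstacle'': termination. More seriously, your argument never invokes condition \textnormal{(P1)} (that $f_k\geq 1$ or $f_k\leq -1$). This is fatal, because without \textnormal{(P1)} the statement is simply false---see Remark~\ref{K cannot change sign}, where a flat cylinder collapsing to a point carries fixed positive area with no bubble. Since every step you wrote uses only \textnormal{(P2)} and \textnormal{(P3)}, the argument cannot be completed as outlined: any valid proof must at some stage exploit $|f_k|\geq 1$.

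Concretely, the failure mode is that area can hide in \emph{neck regions} between successive rescalings rather than at the concentration points of curvature. Your pigeonhole step asserts that area concentrates at some $p^*\in S$, but in fact you only know that area leaves every compact set avoiding $S$; the area could sit in shrinking annuli $D_\rho(p^*)\setminus D_{s_k}(p^*)$ with $s_k\ll\rho\to 0$, never captured by any fixed-scale rescaling. The paper closes this gap via the 3-circle Lemma~\ref{lemma 3 circle lemma} and Lemma~\ref{lem area identity on on neck region}: on a neck where the curvature mass is small, \textnormal{(P1)} combined with Gauss--Bonnet forces the area to vanish (the region where $|\Phi_{v_k}|\leq\delta$ has area $\leq 2\delta$ precisely because $|f_k|\geq 1$ converts curvature into area). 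The paper then runs an induction on the size of the curvature atom $\nu(\{0\})$ (in units of $\epsilon_0/2$), which terminates because each rescaling strictly decreases the atom mass at the new concentration points. Your appeal to ``finite area and curvature budgets'' does not supply this: the total curvature is scale-invariant and does not decrease along your iteration, so there is no a priori bound on the depth of ghost bubbles.
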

	
	\begin{proof}
		\textbf{Step 1:} 	
		We first show that (\ref{identity.disk}) holds if $\lambda_{k}=0$ for all $k$.  
		
		By  (P2), 
		$|\K_{g_{k}}|(D) \leq \Lambda$, then we may  assume $|\K_{g_{k}}|$ converges to a Radon measure $\nu$.  
		Then there exists some positive integer $m$ such that $\frac{(m-1)}{2} \epsilon_{0} \leq \nu(\{0\})<\frac{m}{2}\epsilon_0$,
		where $\epsilon_{0}$ is as in Theorem \ref{thm convergence of solutions of Radon measure when measure is small}. Select $\delta$ such that $\nu(D_\delta\backslash\{0\})<\frac{\epsilon_0}{4}$. 
		We shall prove the result by induction on $m$.
		
		First, we assume $ \nu(\{0\})  \leq \frac{\epsilon_{0}}{2}$ ($m=1$). 	  
		Since $ \nu( D_{\delta }  ) < \epsilon_{0}$ and $|\mathbb{K}_{g_{k}}|(D_{{1}/{2}\delta}) \leq \epsilon_{0}$ for sufficiently large $k$, 
		by Theorem \ref{thm convergence of solutions of Radon measure when measure is small}, $e^{2u_k}$ converges in $L^1$, which implies \eqref{identity.disk}.

		Now, we assume the result holds if $\frac{(m-1)}{2} \epsilon_{0} \leq \nu(\{0\}) \leq \frac{m}{2}\epsilon_0 (m \geq 1)$ and prove the case under the assumption $\frac{m}{2} \epsilon_{0} \leq \nu(\{0\}) \leq \frac{(m+1)}{2}\epsilon_0$. 
		For any $ x \in \overline{D_{{\delta}/{2}}}$, we set 
		\begin{align*}
		r_{k}(x)=\inf\left\{t:  |\mathbb{K}_{g_{k}}|( D_{t}(x))  \geq  \frac{\epsilon_{0}}{2} \right\},\s \textnormal{and}\s r_k=\inf_{\overline{D_{\delta/2}}}r_k(x). 
		\end{align*}

		For any sufficiently small $\epsilon>0$ and sufficiently large $k$, 
		\begin{align*}
		\frac{\epsilon_{0}}{2} \leq \nu(\{0\}) \leq |\mathbb{K}_{g_{k}}|(D_{\epsilon}   ),
		\end{align*}
		then $r_k\leq\epsilon$, which implies that $r_{k} \rightarrow 0$. By the lower semi-continuity of $r_{k}$, we can find $x_{k}$ such that
		$ r_{k}(x_{k})=r_{k}$ and $x_{k } \rightarrow 0$ while noting that $x_k\rightarrow x'\neq 0$ would imply 
		$\nu(\{x'\})\geq\frac{\epsilon_0}{2}$ so $\nu(D_\delta\backslash\{x'\}) \geq \frac{\epsilon_0}{2}$, which contradicts the fact that $\nu(D_\delta\backslash\{0\})<\frac{\epsilon_0}{4}$. 
		We obtain that for any $z \in \mathbb{R}^{2}$ and for sufficiently large $k$,
		\begin{align*}
		|\mathbb{K}_{g_{k}}|(  D_{r_{k}  }(r_{k}z+x_{k})) \leq \frac{ \epsilon_{0}}{2} .
		\end{align*}
		Since $D_{\frac{r}{2}} \subset D_{r}(x_{k})$ for any fixed $r$ when $k$ is large, it suffices to show 
		\begin{align*}
		\lim_{r \rightarrow 0} \lim_{k \rightarrow \infty} \Area(  D_{r}(x_{k}) ,g_{k} )=0. 
		\end{align*}
		
		We define
		\begin{align*}
		t_{k}&=\inf\left\{ t:|\mathbb{K}_{g_{k}}|( D_{t}(x_{k})) \geq \frac{m \epsilon_{0}}{2} \right\},
		\end{align*}
		and set \begin{align*}
		U_{k}(x)=u_{k}(t_{k}x+x_{k})+\log t_{k}, \quad
		g_{k}^{\prime } =e^{2 {U}_{k}} g_{\euc}.
		\end{align*}
		Since $\nu(\{0\})\geq\frac{m}{2}\epsilon_0$, we see $t_k\rightarrow 0$.

		Assume $|\K_{g_k'}|$ converges to $\nu'$ weakly. Put
		$$
		\S'=\left\{ y : \nu'( \{ y \} ) \geq \frac{\epsilon_{0}}{2} \right\}.
		$$ 
		
		For any fixed $r>0$ and sufficiently large $k$, 
		we divide $\Area(  D_{r}(x_{k}) ,g_{k} )$ into three parts as follows:
		\begin{align}
		\int_{D_{r}(x_{k})} e^{2u_{k}}=&\int_{ D_{r}(x_{k})   \backslash D_{ \frac{t_{k}}{r}  }(x_{k}) } e^{2u_{k}} +\int_{  D_{ \frac{t_{k}}{r}  }(x_{k}) } e^{2u_{k}} =\int_{ D_{r}(x_{k})   \backslash D_{ \frac{t_{k}}{r}  }(x_{k}) } e^{2u_{k}} +\int_{  D_{ \frac{1}{r}  }} e^{2U_{k}} \nonumber \\
		&=\int_{ D_{r}(x_{k})   \backslash D_{ \frac{t_{k}}{r}  }(x_{k}) } e^{2u_{k}} +\int_{  D_{ \frac{1}{r} } \backslash \cup_{y \in\S'}D_r(y) } e^{2U_{k}} +\int_{  \cup_{y \in\S'}D_r(y) } e^{2U_{k}} , \label{integral plus}
		\end{align}
		we will show that each part of (\ref{integral plus}) will tend to zero as $k \rightarrow \infty$ and then $r \rightarrow 0$. 
		
		Firstly, we show: for any $r>0$, 
		\begin{align}
		\label{integral estimate 1}
		\lim_{k\rightarrow\infty}\int_{D_{\frac{1}{r}} \backslash \cup_{y \in\S'}D_r(y)}e^{2U_k}=0.
		\end{align}
		
		By \cite[Lemma 2.5]{C-L2} and (P3),  
		for any fixed $R>0$ and $z \in 
		\mathbb{R}^{2}$, for any $q \in [1,2)$ and sufficiently large $k$, 
		\begin{align*}
		\int_{D_{R}} |\nabla {U}_{k}(x)|^{q} dx&=\int_{ D_{R t_{k}}(x_{k})  } t_{k}^{q-2} |\nabla u_{k}(z)|^{q} dz =R^{2-q} \cdot (R t_{k})^{q-2} \int_{ D_{R t_{k}}(x_{k}) } |\nabla u_{k}|^{q} \\
		&\leq C(R,q,\Lambda_{1},\Lambda_{2}).
		\end{align*}
		Let ${C}_{k}=\frac{1}{|D|   } \int_{D} {U}_{k}$. By  the Poincaré inequality,
		\begin{align*}
		\|{U}_{k}-{C}_{k}\|_{ W^{1,q}( D_{R} )  } \leq C(R,q,\Lambda_{1},\Lambda_{2}).
		\end{align*}
		Then we may assume ${U}_{k} -{C}_{k}\to {U}_{\infty}$ in $W^{1,q}_{\loc}(\mathbb{R}^{2})$. Since $\int_{D}e^{2U_k}=\int_{D_{t_k}(x_k)}e^{2u_k}$, by Jensen's inequality  and (P2), $C_k$ is bounded above.
		If $C_k$ is bounded below, by Theorem \ref{thm convergence of solutions of Radon measure when measure is small}, $U_k$ converges to a bubble. Since $\{u_k\}$ has no bubble, we must have $C_k\rightarrow-\infty$, then $U_k$ converges to $-\infty$ for a.e. $x$. By Theorem \ref{thm convergence of solutions of Radon measure when measure is small} again, we obtain
		\begin{align*}
		\lim_{k\rightarrow\infty}\int_{D_{\frac{1}{r}} \backslash \cup_{y \in\S'}D_r(y)}e^{2U_k}=0.
		\end{align*}
		
		Secondly, we show: 
		\begin{align}
		\label{integral estimate 2}
		\lim_{r\rightarrow 0}\lim_{k\rightarrow\infty} \int_{  \cup_{y \in\S'}D_r(y) } e^{2U_{k}} =0.
		\end{align}
		If $\displaystyle \frac{t_{k}}{r_{k}} \rightarrow (2\alpha)^{-1} <\infty$, then for any $z \in \mathbb{R}^{2}$, 
		\begin{align*}
		|\mathbb{K}_{g_k'} |( D_{  \frac{r_{k}}{t_{k}}   }(  z)  )=|\mathbb{K}_{g_{k}}|( D_{r_{k}} (t_{k}z+x_{k})) \leq \frac{\epsilon_{0}}{2}, 
		\end{align*}
		which implies that for sufficiently large $k$, $\displaystyle  |\mathbb{K}_{g_k'} | (D_{\alpha}(z)) \leq \frac{\epsilon_{0}}{2}$. Then 
		\begin{align*}
		\lim_{r \rightarrow 0} \lim_{k \rightarrow \infty}   \int_{D_r(y)}e^{2U_k}=0. 
		\end{align*}
		If $\displaystyle \frac{t_{k}}{r_{k}} \rightarrow \infty$, then by the choice of $r_{k}$ and $t_{k}$, for some small $\displaystyle \epsilon^{\prime \prime}<\frac{\epsilon_{0}}{100}$, for any $R>0$ and sufficiently large $k$, 
		\begin{align*}
		|\mathbb{K}_{g_k'} |( \overline{ D_{  \frac{r_{k}}{t_{k}} }})&=|\mathbb{K}_{g_{k}}|( \overline{D_{r_{k}}(x_{k}) } ) \geq \frac{\epsilon_{0}}{2},\\
		|\mathbb{K}_{g_k'} |(D)&= |\mathbb{K}_{g_{k}}|( D_{t_{k}}(x_{k}) ) \in \left[\frac{m \epsilon_{0}}{2}, \frac{m \epsilon_{0}}{2}+\epsilon^{\prime \prime}\right],  \\
		|\mathbb{K}_{g_k'} |(  D_R   )&= |\mathbb{K}_{g_{k}}|( D_{Rt_{k}}(x_{k}) ) \leq |\mathbb{K}_{g_{k}}|(D_{\frac{1}{2} \delta}) \leq \frac{(m+1)\epsilon_{0} }{2}+\frac{\epsilon_{0}}{4}+\epsilon{''}.
		\end{align*}
		Then we have
		\begin{align*}
		\nu'(\{0\}) \geq \frac{\epsilon_{0}}{2}, \quad \nu'(D) \leq \frac{m \epsilon_{0}}{2}+\epsilon^{\prime \prime},\quad \nu'(\mathbb{R}^{2} \backslash D) <\epsilon_{0}
		\end{align*}
		which yield
		\begin{align*}
		\nu'(\{y\}) \leq \max \left\{\frac{(m-1) \epsilon_{0}}{2}+\epsilon^{\prime \prime},\epsilon_{0} \right\}<\frac{m\epsilon_{0}}{2}, \quad \forall y \in \S'. 
		\end{align*}
		Now it follows from the induction hypothesis that 
		$$
		\lim_{r\rightarrow 0}\lim_{k\rightarrow\infty}\int_{D_r(y)}e^{2U_k}=0. \quad \forall y \in \S'. 
		$$
		
		Thirdly, we show: 
		\begin{align}
		\label{integral estimate 3}
		\lim_{r\rightarrow 0}\lim_{k\rightarrow\infty}\Area(D_r(x_k)\backslash D_{\frac{t_k}{r}}(x_k),g_k)=0.
		\end{align}
		By (P1) and Lemma \ref{lem area identity on on neck region}, we only need to  show 
		$$
		\lim_{r\rightarrow 0}\lim_{k\rightarrow 0}\sup_{t\in[\frac{t_k}{r},r]}\Area(D_t\backslash D_{ \frac{t}{e} }(x_k),g_k)=0.
		$$
		Assuming this is not true, we can find $t_k'$, such that $t_k'\rightarrow 0$, $\frac{t_k'}{t_k} \rightarrow\infty$ and
		$$
		\Area(D_{t_k'}\backslash D_{{t_k'}/{e}}(x_k),g_k)\rightarrow\lambda>0.
		$$
		Let $\tilde{U}_k=u_k(x_k+t_k'x)+\log t_k'$. Then $\int_{D \backslash D_{{1}/{e}}}e^{2\tilde{U}_k}\rightarrow\lambda>0$. By Theorem \ref{thm convergence of solutions of Radon measure when measure is small},
		$\tilde{U}_k$ converges to a bubble, but this contradicts our assumption.
		
		Combining (\ref{integral plus}), (\ref{integral estimate 1}), (\ref{integral estimate 2}) and (\ref{integral estimate 3}), 
		we conclude that \eqref{identity.disk} holds if $\frac{m}{2} \epsilon_{0} \leq \nu(\{0\}) \leq \frac{(m+1)}{2}\epsilon_0$. Now the induction is complete. Hence (\ref{identity.disk}) holds if $\lambda_{k}=0$ for all $k$.     
		
		\textbf{Step 2:} 	
		We show the validity of (\ref{identity.disk}) without assuming $\lambda_{k}=0$ for all $k$.  
		
		By (P2), $\Area(D,g_{k})=\int_{D} e^{2u_{k}} \leq \Lambda_{1}$, then for any positive integer $N$, there exists a sequence $\tilde{r}_{k}^{N} \rightarrow 0$ such that
		\begin{align*}
		\Area(D_{  N\tilde{r}_{k}^{N} } ,g_{k} ) \leq \frac{1}{k}. 
		\end{align*}
		By a diagonal argument, we choose a sequence $\{ r_{k} \}$ by $r_{k}=\tilde{r}_{k}^{k}$, then $r_{k} \rightarrow 0$ and for fixed $r>0$, for any $k>\frac{1}{r}$, 
		\begin{align*}
		\Area( D_{ \frac{r_{k}}{r}} , g_{k}  ) \leq \Area(D_{  k r_{k} } ,g_{k} )  \leq \frac{1}{k}, 
		\end{align*}
		which yields that
		\begin{align}
		\label{bubble region estimate}
		\lim_{r \rightarrow 0} \lim_{k \rightarrow +\infty} \Area( D_{ \frac{r_{k}}{r}} , g_{k}  )=0.
		\end{align}
		
		Since
		\begin{align*}
		\Area( D_{r}, g_{k}   )= \Area( D_{r} \backslash D_{  \frac{r_{k}}{r} } , g_{k}   )+ \Area( D_{  \frac{r_{k}}{r} }, g_{k} ) , 
		\end{align*}
		it suffices to show
		\begin{align}
		\label{neck region estmate}
		\lim_{r \rightarrow 0} \lim_{k \rightarrow \infty} \Area( D_{r}\backslash D_{  \frac{r_{k}}{r} } , g_{k}   )=0. 
		\end{align}
		
		For any large $i\in\mathbb N$, we set $ u_{k}^{i}=u_{k}|_{D_{ 2^{-i+2}} \backslash  D_{ 2^{-i-2}}  }$ and define
		\begin{align*}
		A(k)=\left\{i: |\mathbb{K}_{g_{k}}|(  D_{ 2^{-i+2}} \backslash  D_{ 2^{-i-2}}) \geq \frac{\epsilon_{0}}{2} \right\}, \, B(k)=\left\{  i: |\mathbb{K}_{g_{k}}|(  D_{ 2^{-i+2}} \backslash  D_{ 2^{-i-2}}    ) < \frac{\epsilon_{0}}{2} \right \}
		\end{align*}
		where $\epsilon_{0}$ is as in Theorem \ref{thm convergence of solutions of Radon measure when measure is small}. The cardinality $\#A(k)$ has an upper bound depending only on $\Lambda_{1}$ and $\Lambda_{2}$. 
		
		Now for any fixed $i$, if $i \in B(k)$ for all large $k$, then by Theorem \ref{thm convergence of solutions of Radon measure when measure is small}, after passing to a subsequence, either $e^{2u_{k}^{i}}\to e^{2u^{i}}$ in $L^{1}( D_{ 2^{-i+1}} \backslash  D_{ 2^{-i-1}}   )$ for some $u^{i}$, or $e^{2u_{k}^{i}}\to 0$ in $L^{1}( D_{ 2^{-i+1}} \backslash  D_{ 2^{-i-1}}   )$. If not, after passing to a subsequence, we may assume for any $k$, $i \in A(k)$. We assume $|\mathbb{K}_{g_{k}}|\rightharpoonup\nu_{i}$ on $ D_{ 2^{-i+2}} \backslash  D_{ 2^{-i-2}}$, and set
		\begin{align*}
		\mathcal{S}_{i}=\left\{ y \in D_{ 2^{-i+2}} \backslash  D_{ 2^{-i-2}}: \nu_{i}( \{y\} )  \geq \frac{\epsilon_{0}}{2} \right\}.
		\end{align*}
		
		By (P1), $\mathbb{K}_{g_{k}}=f_{k} dV_{g_{k}}$ on  $D_{ 2^{-i+2}} \backslash  D_{ 2^{-i-2}}$, then by \textbf{Step 1}, 
		for any $y \in \mathcal{S}_{i}$,
		\begin{align*}
		\lim_{r \rightarrow 0} \lim_{k \rightarrow \infty} \int_{D_{r}(y)} e^{2u_{k}^{i}}=0. 
		\end{align*}
		Then combining with Theorem \ref{thm convergence of solutions of Radon measure when measure is small}, after passing to a subsequence, we still have: either $e^{2u_{k}^{i}}\to e^{2u^{i}}$ in $L^{1}( D_{ 2^{-i+1}} \backslash  D_{ 2^{-i-1}}   )$ for some function $u^{i}$, or $e^{2u_{k}^{i}}\to 0$ in $L^{1}( D_{ 2^{-i+1}} \backslash  D_{ 2^{-i-1}}   )$.
		Therefore, 
		\begin{align*}
		\lim_{i \rightarrow \infty} \lim_{k \rightarrow \infty} \Area( D_{ 2^{-i+1}} \backslash  D_{ 2^{-i-1}} ,g_{k}  )=0, 
		\end{align*}
		and (\ref{neck region estmate}) follows immediately. 
		
		Combining (\ref{bubble region estimate}) and (\ref{neck region estmate}), we see
		\begin{align*}
		\lim_{r \rightarrow 0} \lim_{k \rightarrow \infty} \Area( D_{r}, g_{k}   )= 
		\lim_{r \rightarrow 0} \lim_{k \rightarrow \infty}
		\Area( D_{r} \backslash D_{  \frac{r_{k}}{r} } , g_{k}   )+ 
		\lim_{r \rightarrow 0} \lim_{k \rightarrow \infty} \Area( D_{  \frac{r_{k}}{r} }, g_{k} )=0 .
		\end{align*}
		We complete the proof. 
	\end{proof}

	With the preparations above, we are ready to prove Theorem \ref{theorem area convergence of bubble tree}.     
	
	\begin{proof}
		\textbf{Step 1:} 
		We first show that (\ref{bubble tree area convergence}) holds if $\lambda_{k}=0$ for any $k$. 
		We shall argue by induction on $m$ = the number of bubbles. 
		If $m=0$, then $\{u_{k}\}$ has no bubbles. Applying Theorem \ref{thm convergence of solutions of Radon measure when measure is small} and Proposition \ref{pro no area concentration at zero on D}, there exists a finite set $\mathcal{S}$ such that
		\begin{align*}
		\lim_{k \rightarrow \infty} \Area(D_{{1}/{2}},g_{k})&= \lim_{r \rightarrow 0} \lim_{k \rightarrow \infty} \Area(D_{{1}/{2}} \backslash \cup_{z \in \mathcal{S}} D_{r}(z), g_{k})\\
		&+\sum_{z \in \mathcal{S}} \lim_{r \rightarrow 0} \lim_{k \rightarrow \infty} \Area(D_{r}(z),g_{k}) \\
		&=\lim_{r \rightarrow 0} \lim_{k \rightarrow \infty} \int_{ D_{{1}/{2}} \backslash \bigcup_{z \in \mathcal{S}} D_{r}(z)} e^{2u_{k}}+0 \\
		&=\Area(D_{{1}/{2}},g_{\infty}). 
		\end{align*}
		
		Next, we assume the assertion holds if $\{u_{k}\}$ has $m(\geq 0)$ bubbles and we will show that it still holds if  $\{u_{k}\}$ has $m+1$ bubbles. 
		
		\noindent \textbf{Case 1:} 
		We first consider the case that there is only one blowup sequence at 0, say $\{ (x_{k}^{1},r_{k}^{1}) \}$, which converges to a bubble at 1-level at 0. 
		
		By Definition \ref{def bubble right on top}, all the other bubbles can be regarded as bubbles of $u_{k}^{1}(x)=u_{k}(x_{k}^{1}+r_{k}^{1} x)+ \log r_{k}^{1}$. Since $u_{k}^{1}$ has $m$ bubbles, the induction hypothesis implies: for sufficiently large $R$, 
		\begin{align*}
		\lim_{ k \rightarrow \infty} \int_{ D_{Rr_{k}^{1} }(x_{k}^{1})   } e^{2u_{k}}=\lim_{k \rightarrow \infty} \int_{D_{R}} e^{2u_{k}^{1}}=\int_{D_{R}} e^{2v_{1}}+\sum_{i=2}^{m+1} \Area(\mathbb{R}^{2},g^{i}). 
		\end{align*}
		We point out that when $m=0$ the term $\sum_{i=2}^{m+1} \Area(\mathbb{R}^{2},g^{i})$ vanishes. 
		
		Applying Theorem \ref{thm convergence of solutions of Radon measure when measure is small} and Proposition \ref{pro no area concentration at zero on D} (in particular the proof of (\ref{neck region estmate})), 
		\begin{align*}
		\lim_{k \rightarrow \infty}& \Area(D_{\frac{1}{2}},g_{k})=\lim_{R \rightarrow \infty} \lim_{k \rightarrow \infty} \Area(D_{\frac{1}{2}} \backslash D_{Rr_{k}^{1}}(x_{k}^{1})  ,g_{k})+\lim_{R \rightarrow \infty} \lim_{k \rightarrow \infty} \Area(D_{Rr_{k}^{1}}(x_{k}^{1})  ,g_{k}) \\
		&=\lim_{R \rightarrow \infty} \lim_{k \rightarrow \infty} \Area(D_{\frac{1}{2}} \backslash D_{\frac{1}{R}}(x_{k}^{1}) ,g_{k})+
		\lim_{R \rightarrow \infty} \lim_{k \rightarrow \infty} \Area( D_{\frac{1}{R}}(x_{k}^{1})\backslash D_{Rr_{k}^{1}}(x_{k}^{1}),g_{k}   ) \\
		&+\lim_{R \rightarrow \infty} \lim_{k \rightarrow \infty} \Area(D_{Rr_{k}^{1}}(x_{k}^{1})  ,g_{k}) \\
		&= \Area(D_{\frac{1}{2}},g_{\infty} )+0+\lim_{R \rightarrow \infty} \int_{D_{R}} e^{2v_{1}}+\sum_{i=2}^{m+1} \Area(\mathbb{R}^{2},g^{i})\\
		&=\Area(D_{\frac{1}{2}},g_{\infty} )+\sum_{i=1}^{m+1} \Area(\mathbb{R}^{2},g^{i}). 
		\end{align*} 
		
		\noindent  \textbf{Case 2:} 
		We consider the case that there are $m'>1$ essentially different blowup sequences $\{ (x_{k}^{1},r_{k}^{1}) \}, \cdots \{ (x_{k}^{m'},r_{k}^{m'})\}$ of $\{u_{k}\}$ at 1-level all at the point 0 (\textbf{Case 2} can happen when $u_{k}$ has at least two bubbles; when $u_{k}$ has only one bubble, the assertion holds by \textbf{Case 1}). 
		
		We claim that $\frac{ |x_{k}^{\alpha}-x_{k}^{\beta}|}{r_{k}^{\alpha}+r_{k}^{\beta}   } \rightarrow \infty$ for any $\alpha \neq \beta \in \{1,\cdots,m^{\prime}\}$. Indeed, if $\frac{ |x_{k}^{\alpha}-x_{k}^{\beta}|}{r_{k}^{\alpha}+r_{k}^{\beta}}$ is bounded, then by Definition \ref{def bubble different}, either $\frac{ r_{k}^{\alpha}}{r_{k}^{\beta} } \rightarrow 0$ or $\frac{ r_{k}^{\beta}}{r_{k}^{\alpha} } \rightarrow 0$. WLOG, we assume $\frac{ r_{k}^{\alpha}}{r_{k}^{\beta} } \rightarrow 0$. Then $\frac{ x_{k}^{\alpha}-x_{k}^{\beta}}{r_{k}^{\beta} }$ converges, so 
		$\{ (x_{k}^{\alpha},r_{k}^{\alpha}) \}<\{ (x_{k}^{\beta},r_{k}^{\beta}) \}$. This contradicts  the assumption that $\{ (x_{k}^{\alpha},r_{k}^{\alpha}) \}$ and $\{ (x_{k}^{\beta},r_{k}^{\beta})\}$ are both at 1-level.
		
		WLOG, we assume 
		\begin{align*}
		\tilde{r}_{k}=2 \max\left\{ |x_{k}^{\alpha}-x_{k}^{\beta}|:\alpha ,\beta \in \{1,\cdots,m^{\prime}\} ,\alpha \neq \beta\right\}
		=2|x_{k}^{1}-x_{k}^{2}|. 
		\end{align*}
		For any $\alpha \in \{1,\cdots,m^{\prime}\}$, 
		\begin{align*}
		\frac{ \tilde{r}_{k}}{ r_{k}^{\alpha} } \geq 2 \max\Big\{ \frac{ |x_{k}^{1}-x_{k}^{\alpha}|}{r_{k}^{\alpha}}, \frac{ |x_{k}^{2}-x_{k}^{\alpha}|}{r_{k}^{\alpha}} \Big\} \rightarrow \infty, \text{ as } k \rightarrow \infty. 
		\end{align*}
		We set $\displaystyle \tilde{u}_{k}(x)=u_{k}(  \tilde{r}_{k} x +x_{k}^{1})+\log \tilde{r}_{k}$. By direct calculations, 
		\begin{align*}
		u_{k}^{\alpha}(x)=u_{k}\left(r_{k}^{\alpha}x+x_{k}^{\alpha}\right)+\log r_{k}^{\alpha} 
		= \tilde{u}_{k}\Big(\frac{r_{k}^{\alpha}}{\tilde{r}_{k}}x+\frac{ x_{k}^{\alpha}-x_{k}^{1}}{\tilde{r}_{k}} \Big)+\log \frac{ r_{k}^{\alpha}}{\tilde{r}_{k}} ,
		\end{align*}
		which implies that any bubble $v^{\alpha}$ (hence all the bubbles sitting over $v^{\alpha}$) is also a bubble of $\{ \tilde{u}_{k} \}$. We select $\tilde{c}_{k}$ such that $\tilde{u}_{k}-\tilde{c}_{k}$ converges weakly in $W_{\loc}^{1,p}(\mathbb{R}^{2})$, then $\tilde{c}_{k} \rightarrow -\infty$ by Jensen's inequality and the fact that $\{u_{k}\}$ has only $m+1$ bubbles. In other words, $\tilde{u}_{k}$ converges to a ghost bubble. 
		By the choice of $\tilde{r}_{k}$, $|\frac{ x_{k}^{\alpha}-x_{k}^{1}}{\tilde{r}_{k}} |=0$ if we take $\alpha=1$, while $|\frac{ x_{k}^{\alpha}-x_{k}^{1}}{\tilde{r}_{k}}  |=\frac{1}{2}$ if we take $\alpha=2$. Therefore, $\tilde{u}_{k}$ has at least 2 blowup points, and we can isolate them by taking two small disks. Therefore, at each of the blowup points, there are at most $m$ bubbles, and there is no bubble outside the blowup points, and the assertion follows from the induction assumption and the fact that $\tilde{c}_{k} \rightarrow -\infty$.  
		
		\textbf{Step 2:} Now we show that (\ref{bubble tree area convergence}) still holds without assuming $\lambda_{k}=0$ for any $k$.
		As in {Step 2} of the proof of Proposition \ref{pro no area concentration at zero on D}, we can choose  $\rho_{k} \rightarrow 0$ such that for any fixed $R>0$, for sufficiently large $k$, 
		\begin{align*}
		\Area(D_{R \rho_{k} }, g_{k} ) \leq \frac{1}{k}.
		\end{align*}
		which yields that 
		\begin{align*}
		\lim_{R \rightarrow +\infty} \lim_{k \rightarrow +\infty}	\Area(D_{R \rho_{k} }, g_{k} ) =0,
		\end{align*}
		Hence we can view $\{(0,\rho_{k}) \}$ as a blowup sequence converging to a ghost bubble. 
		For any blowup sequence $\{(x_{k}^{i},r_{k}^{i})\}$, we claim that 
		\begin{align*}
		\text{either} \ \frac{|x_{k}^{i}|}{  r_{k}^{i}+\rho_{k}  } \rightarrow \infty, \text{ or } \ \frac{\rho_{k}}{r_{k}^{i}} \rightarrow 0.
		\end{align*}
		Indeed, if not, there exists $C_{0}>0$ such that 
		\begin{align*}
		\frac{|x_{k}^{i}|}{  r_{k}^{i}+\rho_{k}  } \leq C_{0}, \quad \lim_{k \rightarrow \infty} \frac{r_{k}^{i} }{\rho_{k}} =0. 
		\end{align*}
		Then for any fixed $\tilde{R}>0$, when $k$ is sufficiently large, 
		\begin{align*}
		\tilde{R} r_{k}^{i} +|x_{k}^{i}| \leq \tilde{R} r_{k}^{i} +C_{0}( r_{k}^{i}+\rho_{k}    ) \leq (2C_{0}+1) \rho_{k},
		\end{align*}
		which implies $D_{ \tilde{R} r_{k}^{i} }(x_{k}^{i}) \subset D_{ (2C_{0}+1) \rho_{k} }$. This means that $\{x_{k}^{i},r_{k}^{i}\}$ will also converge to a ghost bubble, this is impossible. Similar arguments as in {\bf Step 1} then lead to 
		\begin{align*}
		\lim_{k \rightarrow \infty} \Area( D_{\frac{1}{2}},g_{k}    )&=\lim_{R \rightarrow \infty} \lim_{k \rightarrow\infty} \Area( D_{\frac{1}{2}} \backslash D_{R \rho_{k}},g_{k})+\lim_{R \rightarrow \infty} \lim_{k \rightarrow \infty} \Area(  D_{R \rho_{k}},g_{k})
		\\
		&=\Area(D_{\frac{1}{2}},g_{\infty})+\sum_{i=1}^{m}  \Area(\mathbb{R}^{2},g^{i}).
		\end{align*} 
	\end{proof}
	
	\section{Negative curvature with $\chi(\Sigma,\beta)<0$}
	This section is devoted to the proof of Theorem \ref{thm existence of metrics with cusps}. 
	
	\subsection{Existence}
	
	\begin{pro}
		\label{pro negative curvature implies no bubble}
		Let $(\Sigma,g_0)$ be a closed surface and $g_k=e^{2u_k}g_{0}\in\mathcal{M}(\Sigma,g_0)$  with
		$$
		\K_{g_k}=f_k e^{2u_{k}} dV_{g_{0}}-2\pi  \sum_{i=1}^{m}  \beta_i^{k} \delta_{p_i},
		$$
		where $f_{k} \leq 0$ and 
		there exists a constant $\Lambda>0$ such that if
		$$
		\Area(\Sigma,g_k)+|\K_{g_k}|(\Sigma)<\Lambda
		$$
		then $\{u_k\}$ has no bubble.
	\end{pro}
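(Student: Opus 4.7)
The plan is to argue by contradiction via a Gauss--Bonnet computation on a single bubble. Suppose $\{u_k\}$ has a bubble. As a first reduction I will pass to a \emph{deepest} bubble $u$ --- one whose blowup sequence $\{(x_k,r_k)\}$ is not strictly below any other blowup sequence. Such $u$ exists because the uniform bound $|\K_{g_k}|(\Sigma)<\Lambda$ forces the bubble tree to be finite: by the dichotomy of Theorem~\ref{thm convergence of solutions of Radon measure when measure is small}, any new essentially different blowup sequence demands at least $\epsilon_0/2$ of $|\K_{g_k}|$-mass concentrating at its center, so at most $2\Lambda/\epsilon_0$ essentially different bubbles can coexist.

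For this deepest bubble $u$ on $\R^2$, area-invariance of the blowup gives $\Area(\R^2,g_\infty)\leq\Lambda$ and $|\K_{g_\infty}|(\R^2)\leq\Lambda$ with $g_\infty=e^{2u}g_{\euc}$. Maximality of the level eliminates any atom of the limiting curvature coming from the regular part $f_k e^{2u_k}dV_{g_0}$, so
\begin{equation*}
\K_{g_\infty} = h\, d\mathcal{L}^2 - 2\pi\beta^{\mathrm{div}}\delta_{y_*},
\end{equation*}
where the delta term is present only if some $p_i$ satisfies $(p_i-x_k)/r_k\to y_*\in\R^2$; at most one such $p_i$ exists because the points $p_1,\ldots,p_m$ are distinct. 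Since $f_k(x_k+r_k\cdot)e^{2u_k'}\leq 0$ and non-positivity passes to weak limits, $h\leq 0$ a.e. Applying Lemma~\ref{lemma removable singularity}(1) at $y_*$ and using the local behavior $e^{2u}\sim|z-y_*|^{2\beta^{\mathrm{div}}}$ together with $\Area(D_\delta(y_*),g_\infty)\leq\Lambda$ improves the bound to $\beta^{\mathrm{div}}>-1$ strictly (the borderline $\beta^{\mathrm{div}}=-1$ gives log-divergent area). Applying the same lemma to the inversion $u'(x')=u(x'/|x'|^2)-2\log|x'|$ yields $\res(u,\infty)>-1$ strictly.

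The Gauss--Bonnet identity at infinity (derived after Definition~\ref{def residue}) then gives
\begin{equation*}
\K_{g_\infty}(\R^2) = 2\pi\bigl(2+\res(u,\infty)\bigr) > 2\pi,
\end{equation*}
while the decomposition above, together with $h\leq 0$, gives
\begin{equation*}
\K_{g_\infty}(\R^2) = \int_{\R^2}h\, d\mathcal{L}^2 - 2\pi\beta^{\mathrm{div}} \leq -2\pi\beta^{\mathrm{div}} < 2\pi,
\end{equation*}
with the convention $\beta^{\mathrm{div}}=0$ when the delta term is absent. These inequalities contradict each other, ruling out the existence of a bubble. The hard part is the structural claim: rigorously justifying that at a deepest-level bubble the weak limit of $\K_{g_k'}$ carries no atom other than the possible divisor delta. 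This requires tracking both the atomic parts produced by divisor points at the blowup scale and any atoms produced by concentration of the regular part, and invoking the $\epsilon_0$-regularity to show that any surviving atom of the latter kind would generate a further blowup sequence on top, violating maximality.
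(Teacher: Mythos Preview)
Your overall strategy---extend the bubble to $\mathbb{S}^2$ and run Gauss--Bonnet---is the paper's. The gap is the ``structural claim'' that at a deepest-level bubble the limit curvature carries no atom from the regular part $f_ke^{2u_k}dV_{g_0}$. You justify this by saying a surviving atom would force a further blowup on top, but under the bare hypothesis $f_k\le 0$ that implication is not available: Theorem~\ref{thm convergence of solutions of Radon measure when measure is small} does not say ``atom $\Rightarrow$ bubble'', and Proposition~\ref{pro no area concentration at zero on D} needs $|f_k|\ge 1$. With no lower bound on $|f_k|$, curvature can concentrate through $f_k$ becoming very negative on shrinking sets while $e^{2u_k'}$ stays bounded; then every rescaling $u_k'(y_k+t_k\cdot)+\log t_k\to-\infty$ and no bubble sits on top. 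A local model is $u_k'(x)=\tfrac{\lambda}{2}\log(|x|^2+\epsilon_k^2)$ with $\lambda>0$: here $f_k'\le 0$, the limit is $\lambda\log|x|$ with curvature $-2\pi\lambda\,\delta_0$, yet no bubble forms. Once the structural claim falls, so do the asymptotic $e^{2u}\sim|z-y_*|^{2\beta^{\mathrm{div}}}$ (which needs $I_\mu$ bounded) and the strict inequalities $\beta^{\mathrm{div}}>-1$, $\res(u,\infty)>-1$.

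The paper avoids all of this by never trying to exclude atoms. It takes \emph{any} bubble $\tilde u$, lets $\mu=\K_{\tilde g}$ be the weak limit of $\K_{e^{2\tilde u_k}g_{\euc}}$ on $\R^2$ (atoms allowed), and uses only two soft facts: $\mu\le 0$ as a measure away from a possibly captured divisor point, and each residue is $\ge -1$ by Lemma~\ref{lemma removable singularity}(1) from finite area alone. If no divisor point is captured, Gauss--Bonnet on $\mathbb{S}^2$ reads $0=\mu(\mathbb{S}^2)-4\pi-2\pi\lambda\le -2\pi$. If one is captured at $\tilde x$, one gets $0=\nu(\mathbb{S}^2)-4\pi-2\pi\lambda_1-2\pi\lambda_2\le 0$ with $\nu\le 0$, forcing $\nu\equiv 0$ and $\lambda_1=\lambda_2=-1$; but then $\K_{\tilde g}\equiv 0\ge 0$ near $\tilde x$, so Lemma~\ref{lemma removable singularity}(2) upgrades $\lambda_1$ to $>-1$, a contradiction. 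No deepest-bubble reduction, no atom exclusion, no a priori strict inequalities are needed.
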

	
	\begin{proof}
		Assuming the contrary, there exists a bubble at $x_0$. Taking an isothermal coordinate system with $x_0=0$, we can find $r_{k} \rightarrow 0$ and $x_{k} \rightarrow 0$ such that $\tilde{u}_{k}(x)=u_{k}(r_{k}x+x_{k})+\log r_{k}$ converges weakly in $W_{\loc}^{1,p}(\mathbb{R}^{2})$ to a function $\tilde{u}$.
		
		Let $\mu$ denote the weak limit of $\K_{e^{2\tilde{u}_k}g_{\euc}}$ up to contracting  a subsequence, which is a signed Radon measure defined on $\R^2$.
		We refer $\infty$ as the south pole $S$. By Theorem \ref{thm convergence of solutions of Radon measure when measure is small} and Lemma \ref{lemma gradient estimate on Riemann surfaces} (Lemma 2.3 is used to guarantee the gradient assumptions in Theorem 2.2) for any $R>0$, for sufficiently large $k$, 
		\begin{align*}
		\int_{D_{R}} e^{2 \tilde{u}  } &=\lim_{r \rightarrow 0} \lim_{k \rightarrow\infty} \int_{D_{R} \backslash \cup_{y \in \mathcal{S}} D_{r}(y) }e^{ 2\tilde{u}_{k}}\\
		&\leq \liminf_{k \rightarrow\infty} \int_{D_{R}} e^{ 2\tilde{u}_{k}}=\liminf_{k\rightarrow\infty} \int_{D_{R r_{k}}(x_{k})} e^{2u_{k}} \leq \Lambda,
		\end{align*}
		where $\epsilon_{0}$ is as in Theorem \ref{thm convergence of solutions of Radon measure when measure is small} and 
		$$\mathcal{S}=\big\{ y: \lim_{r \rightarrow 0}\liminf_{k\to\infty}|\K_{e^{2\tilde{u}_k} g_{\euc}}|(D_{r}(y)) \geq \frac{\epsilon_{0}}{2} \big\}. $$

		By Lemma \ref{lemma removable singularity},  $\tilde{g}=e^{2\tilde{u}}g_{\euc}$ can be considered as a metric in $\mathcal{M}(\mathbb{S}^2)$.

		\noindent \textbf{Case 1:} $x_0\notin\{p_1,\cdots,p_m\}$. 
		
		Then $\mu$ is non-positive. We extend $\mu$ to a measure over $\mathbb S^2$ by defining $\mu(\{S\})=0$.
		Since $\Area(\mathbb{S}^2, \tilde{g})<\infty$, 
		by Lemma \ref{lemma removable singularity}, there exists $\lambda \geq -1$ such that 
		$$
		\K_{\tilde{g}}=\mu-2\pi\lambda\delta_{S}.
		$$
		Therefore, 
		\begin{align*}
		-\Delta_{\mathbb{S}^{2}} \tilde{u}=\mu-1-2 \pi \lambda \delta_{S},
		\end{align*}
		which implies that
		\begin{align*}
		0=\mu(\mathbb{S}^{2})-4 \pi -2 \pi \lambda \leq -2\pi,
		\end{align*}
		a contradiction. 
		
		\noindent \textbf{Case 2:} $x_0=p_1$, and $\frac{|x_k|}{r_k}\rightarrow\infty$. 
		
		For this case we can also conclude non-positivity of  $\mu$ and obtain a contradiction as in Case 1.
		
		\noindent \textbf{Case 3:} $x_0=p_1$ and $ \frac{ -x_{k}}{r_{k}} \rightarrow \tilde{x}$. 
		
		For this case, $\K_{e^{2\tilde{u}_k}g_{\euc}}$ is non-positive on $D_\frac{1}{r}\backslash D_r(\tilde{x})$. Then for any non-negative $\varphi \in \mathcal{C}_{c}^{\infty}(D_\frac{1}{r}\backslash D_r(\tilde{x}))$, we have 
		$$
		\int_{\R^2}\varphi d\mu=\lim_{k\rightarrow\infty}
		\int_{D_\frac{1}{r}\backslash D_r(\tilde{x})}\varphi d\K_{e^{2\tilde{u}_k}g_{\euc}}\leq 0,
		$$
		which implies that $\mu$ is non-positive on $\mathbb{S}^2\backslash\{S,\tilde{x}\}$. 
		
		Let $\nu=\mu\lfloor(\R^2\backslash\tilde{x})$.
		By Lemma \ref{lemma removable singularity}, there exist $\lambda_{1} \geq -1, \lambda_{2} \geq -1$ such  that
		$$
		\K_{\tilde{g}}=\nu-2\pi\lambda_1\delta_{\tilde{x}}-2\pi\lambda_2\delta_{S},
		$$
		where $\nu$ is non-positive. 
		
		Therefore, 
		\begin{align*}
		-\Delta_{\mathbb{S}^{2}} \tilde{u}=\nu-1-2 \pi \lambda_{1} \delta_{\tilde{x}}-2 \pi \lambda_{2} \delta_{S},
		\end{align*}
		which implies that
		\begin{align*}
		0=\nu(\mathbb{S}^{2})-4 \pi -2 \pi \lambda_{1}  -2\pi \lambda_{2} \leq 0.
		\end{align*}
		Then we obtain 
		$$
		\nu=0,\s and\s \lambda_1=\lambda_2=-1,
		$$
		which leads to a contradiction to Lemma \ref{lemma removable singularity}.
	\end{proof}
	
	The result below on the convergence of $\{u_k\}$ follows from Proposition \ref{pro no area concentration at zero on D} and Theorem \ref{thm convergence of solutions of Radon measure when measure is small}.

	\begin{pro}
		\label{prop convergence.with singular point}
		Let $g_k=e^{2u_k}g_{\euc}\in\mathcal{M}(D)$ with $\K_{g_k}=f_ke^{2u_k}dx-2\pi\beta_k\delta_0$. Assume 
		\begin{itemize}
			\item[(1)] $\Area(D,g_k)\leq\Lambda_1$;
			
			\item[(2)] \( r \|\nabla u_{k}\|_{L^{1}(D_r(x))} \leq \Lambda_{2} \), for all \( D_r(x) \subset D  \);
			
			\item[(3)] $-\Lambda_3 \leq f_k\leq -1$ or $1\leq f_k\leq \Lambda_3$,  and $f_k$ converges to $f$ for a.e. $x\in D$;
			
			\item[(4)] $\beta_k\rightarrow\beta$.
		\end{itemize}
		Assume $\{ u_k \}$ has no bubble. Then, after passing to a subsequence, one of the following holds:
		\begin{itemize}
			\item[(a)] $u_k\to u$ weakly in $\cap_{p \in [1,2)} W^{1,p}(D_\frac{1}{2})$
			and $e^{2u_k}\to e^{2u}$ in $L^1(D_\frac{1}{2})$, and 
			$$
			-\Delta u=fe^{2u}-2\pi \beta\delta_0;
			$$
			\item[(b)] $u_k\to -\infty$ a.e. and $e^{2u_k}\to 0$ in $L^1(D_\frac{1}{2})$.
		\end{itemize}
	\end{pro}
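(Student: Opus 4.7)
The plan is to combine Proposition \ref{pro no area concentration at zero on D}, which rules out area concentration at any point, with Theorem \ref{thm convergence of solutions of Radon measure when measure is small}, which yields a local dichotomy on disks where the curvature measure is small, and then glue the local pictures together. As preparation, hypotheses (1) and (3) give $|\K_{g_k}|(D) \le \Lambda_1\Lambda_3 + 2\pi|\beta_k|$, uniformly bounded. Passing to a subsequence, $|\K_{g_k}| \rightharpoonup \nu$ weakly; with $\epsilon_0 \in (0,\pi)$ as in Theorem \ref{thm convergence of solutions of Radon measure when measure is small}, the concentration set
$$
\mathcal{S} = \Bigl\{y \in \overline{D_{1/2}} : \nu(\{y\}) \ge \epsilon_0/2\Bigr\}
$$
is finite. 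I would then show $\lim_{r\to 0}\lim_{k\to\infty}\Area(D_r(x_0),g_k) = 0$ for every $x_0 \in \overline{D_{1/2}}$ by invoking Proposition \ref{pro no area concentration at zero on D}: at $x_0 = 0$ directly, with $\lambda_k = -2\pi\beta_k$ (bounded by hypothesis (4)); at $x_0 \ne 0$ after translating so that $x_0$ is the origin and restricting to a disk disjoint from the original singular point, so that $\lambda_k = 0$. The sign and gradient conditions (P1)--(P3) transfer using (3), (2), and the bound on $|\K_{g_k}|(D)$, and the no-bubble hypothesis is preserved under translation and restriction.

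Next, for each $x_0 \in \overline{D_{1/2}} \setminus \mathcal{S}$, pick $\rho > 0$ small enough that $\overline{D_{2\rho}(x_0)} \subset D \setminus \mathcal{S}$ and $|\K_{g_k}|(D_{2\rho}(x_0)) < \epsilon_0$ for all large $k$. Theorem \ref{thm convergence of solutions of Radon measure when measure is small} then produces, on $D_\rho(x_0)$, one of two mutually exclusive alternatives: either $u_k \rightharpoonup u$ in $\bigcap_{p \in [1,2)} W^{1,p}$ with $e^{2u_k} \to e^{2u}$ in some $L^q$ with $q > 1$, or $u_k \to -\infty$ a.e.\ and $e^{2u_k} \to 0$ in $L^q$. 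A finite cover of $\overline{D_{1/2}} \setminus \mathcal{S}$ together with a diagonal extraction forces a single global alternative on $D_{1/2} \setminus \mathcal{S}$, since on any overlap the two options are incompatible. Combined with the area vanishing near $\mathcal{S}$ from the previous step, this upgrades to $L^1(D_{1/2})$ convergence of $e^{2u_k}$, while weak $W^{1,p}(D_{1/2})$ convergence of $u_k$ in case (a) follows from Poincaré applied to $u_k$ minus its mean and from the Jensen alternative on the mean forced by (1).

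Finally, in alternative (a) I would pass to the limit in $-\Delta u_k = f_k e^{2u_k} - 2\pi\beta_k\delta_0$: weak $W^{1,p}$-convergence handles the Laplacian, $\beta_k \to \beta$ handles the delta, and $f_k e^{2u_k} \to f e^{2u}$ in $L^1$ by Vitali's theorem, since $e^{2u_k}$ is $L^1$-convergent and therefore uniformly integrable while $f_k \to f$ a.e.\ with $|f_k| \le \Lambda_3$. I expect the main obstacle to be the no-area-concentration step: relocating Proposition \ref{pro no area concentration at zero on D} to concentration points other than the prescribed singular one, and carefully checking that (P1)--(P3) together with the no-bubble hypothesis persist after translation and restriction to a subdisk.
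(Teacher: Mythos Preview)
Your proposal is correct and uses the same two key ingredients as the paper: Proposition~\ref{pro no area concentration at zero on D} to kill area concentration at every point of $\overline{D_{1/2}}$, and Theorem~\ref{thm convergence of solutions of Radon measure when measure is small} for the dichotomy away from the finite concentration set $\mathcal S$. The only organizational difference is that the paper opens with the global mean-value dichotomy: letting $c_k$ be the average of $u_k$ over $D_{1/2}$, Poincar\'e plus (2) gives weak $W^{1,p}$-compactness of $u_k-c_k$, and Jensen plus (1) forces either $c_k$ bounded or $c_k\to-\infty$, which immediately sorts every subsequence into case (a) or (b) globally. You instead obtain the dichotomy locally on small disks and then patch by a finite-cover argument, recovering the mean-value statement only afterwards. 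Both routes are fine; the paper's ordering is slightly shorter because it avoids the overlap-compatibility step. For the passage to the limit in the equation, the paper pairs the $L^q$-convergence of $e^{2u_k}$ (some $q>1$) with the $L^{q'}$-convergence of $f_k$ coming from $|f_k|\le\Lambda_3$ and dominated convergence, whereas you invoke Vitali; these are equivalent here.
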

	
	\begin{proof}
		Let $c_k$ be the mean value of $u_k$ over $D_{{1}/{2}}$. By the Poincaré inequality and Sobolev inequality, we may assume $u_k-c_k$ converges weakly in $\cap_{p \in [1,2)}  W^{1,p}(D_{{1}/{2}})$. Let $\mu$ be the weak limit of $|\K_{g_k}|$, and
		$$
		\S=\left\{y:\mu(\{ y \})>\frac{\epsilon_0}{2}\right\}
		$$
		where $\epsilon_0$ is as in Theorem \ref{thm convergence of solutions of Radon measure when measure is small}. 
		
		Since $\Area(D,g_k)\leq\Lambda_1$, by Jensen's inequality, we may assume $c_k<C$.
		When $c_k\rightarrow-\infty$, by Theorem \ref{thm convergence of solutions of Radon measure when measure is small} and Proposition \ref{pro no area concentration at zero on D}, $e^{2u_k}\to 0$ in $L^1(D_{{1}/{2}})$. When $c_k$ converges, we may assume $u_k\to u$ weakly in $\cap_{p \in [1,2)}  W^{1,p}(D_{{1}/{2}})$. By Theorem \ref{thm convergence of solutions of Radon measure when measure is small}, $e^{2u_k}\to e^{2u}$ in $L^q(D_{{1/}{2}}\backslash \cup_{y \in\S}D_r(y))$ and $f_k\to f$ in $L^{q'}(D)$, where ${1}/{q}+{1}/{q'}=1$. Then
		\begin{align*}
		\lim_{k \rightarrow \infty} \int_{ D_{{1}/{2}} \backslash \cup_{y \in \S} D_{r}(y) } \varphi f_{k} e^{2u_{k}}=\int_{ D_{{1}/{2}} \backslash \cup_{y \in \S} D_{r}(y) } \varphi f e^{2u}.
		\end{align*}
		
		By Proposition \ref{pro no area concentration at zero on D}, 
		for any $y \in \S$, we have
		\begin{align*}
		&\lim_{r \rightarrow 0} \lim_{k \rightarrow \infty} \int_{D_{r}(y)} \varphi f_{k} e^{2u_{k}} \\
		=& \lim_{r \rightarrow 0} \lim_{k \rightarrow \infty} \int_{ D_{r}(y)} (  \varphi-\varphi(y)   ) f_{k} e^{2u_{k}}+\varphi(y) \lim_{r \rightarrow 0} \lim_{k \rightarrow \infty} \int_{D_{r}(y)} f_{k} e^{2u_{k}}=0.
		\end{align*}
		
		Therefore,
		\begin{align*}
		\int_{D_{{1}/{2}} } \nabla u \nabla \varphi=& \lim_{k \rightarrow \infty} \int_{D_{{1}/{2}} } \nabla u_{k} \nabla \varphi \\
		=& \lim_{k \rightarrow \infty} \Big(  \int_{D_{{1}/{2}} }  \varphi f_{k}  e^{2u_{k}} -2 \pi \beta_{k} \varphi(0) \Big) \\
		=& \int_{D_{{1}/{2}} }  \varphi f  e^{2u} -2 \pi \beta \varphi(0)  ,
		\end{align*}
		which implies
		\begin{align*}
		-\Delta u=f e^{2u}-2 \pi \beta \delta_{0}. 
		\end{align*}
	\end{proof}

	\noindent{\it Proof of Theorem \ref{thm existence of metrics with cusps}}. 
	As $K \in L^{\infty}(\Sigma)$, we can select $\Lambda>1$ and 
	$\phi_{k} \in C^{\infty}(\Sigma )$ with
	\begin{itemize}
		\item[(1)] $-\Lambda \leq \phi_{k} \leq -\frac{1}{\Lambda}$,
		\item[(2)] $\phi_{k} \rightarrow K$ in $L^{q}(\Sigma,g_{0})$ for any $q \geq 1$.
	\end{itemize}
	Since $K<0$ and $\Sigma$ is compact, by Theorem \ref{thm Troyanov negative curvature existence} we can find $u_{k}$ uniquely solving 
	\begin{align*}
	-\Delta_{g_{0}} u_{k} =\phi_{k} e^{2u_{k}}-K_{g_{0}}-2 \pi \sum_{i=1}^{m} \beta_{i}^{k} \delta_{p_{i}},
	\end{align*}
	where 
	\begin{align*}
	\beta_{i}^{k} \rightarrow \beta_{i}, \quad \beta_{i}^{k}>-1, \quad \chi(\Sigma,\beta^{k})=\chi(\Sigma)+\sum_{i=1}^{m} \beta_{i}^{k}<0.
	\end{align*}
	Set  $g_{k}=e^{2u_{k}} g_{0}$. Since 
	\begin{align*}
	\int_{\Sigma} \phi_{k} e^{2u_{k}} d V_{g_{0}}&=2 \pi \chi(\Sigma)+2 \pi \sum_{i=1}^{m} \beta_{i}^{k}=2 \pi \chi(\Sigma,\beta^{k}), \\
	\mathbb{K}_{g_{k}}&=\phi_{k} e^{2u_{k}} d V_{g_{0}}-2 \pi \sum_{i=1}^{m} \beta_{i}^{k} \delta_{p_{i}},
	\end{align*}
	we have
	\begin{align}
	\label{bound of area}
	0<-\frac{ 2 \pi \chi(\Sigma,\beta)}{\Lambda} \leq &\liminf_{k \rightarrow \infty} \Area(\Sigma,g_{k}) \leq \limsup_{k \rightarrow \infty} \Area(\Sigma,g_{k}) \leq -2 \pi \Lambda \chi(\Sigma,\beta)\\ \nonumber
	\textnormal{and} \hspace{1.1in}&\limsup_{k \rightarrow \infty} |\K_{g_k}|(\Sigma)\leq 2\pi \Lambda |\chi(\Sigma)|+4 \pi \sum_{i=1}^{m} |\beta_i|.
	\end{align}
	
	By Proposition \ref{pro negative curvature implies no bubble}, $\{ u_{k} \}$ has no bubble. 
	Let $c_k$ be the mean value of $u_k$ over $(\Sigma,g_{0})$. By Lemma \ref{lemma gradient estimate on Riemann surfaces}, after passing to a subsequence,
	we may assume $u_k-c_k$ converge weakly in $\cap_{p \in[1,2)} W^{1,p}(\Sigma,g_{0})$. 
	By (\ref{bound of area}) and Jensen's inequality, either $c_{k} \rightarrow c_{\infty}>-\infty$, or $c_{k} \rightarrow -\infty$. 
	If $c_k\rightarrow-\infty$, then $u_k\to-\infty$ a.e. Then we deduce from Proposition \ref{prop convergence.with singular point} that $\Area(\Sigma,g_{k})\rightarrow 0$, which 
	leads a contradiction to  (\ref{bound of area}).
	
	Therefore, we can assume $u_k\to u$ weakly in $\cap_{p \in[1,2)} W^{1,p}(\Sigma,g_{0})$. 
	By Proposition \ref{prop convergence.with singular point} again, $u$ is the desired solution.
	\endproof
	
	\subsection{Uniqueness}
	
	Now we show the solution in Theorem \ref{thm existence of metrics with cusps} is unique. 
	\begin{proof}
		\textbf{Step 1:} Let $u \in \cap_{p \in [1,2)} W^{1,p}(\Sigma,g_{0}) $ be a solution of the equation:
		\begin{align*}
		-\Delta_{g_{0}} u =K e^{2u}-K_{g_{0}}-2 \pi \sum_{i=1}^{m} \beta_{i} \delta_{p_{i}}. 
		\end{align*}
		For any $p_{i}$, we choose an isothermal coordinate system with $p_{i}=0$, then locally, 
		\begin{align*}
		-\Delta u=K e^{2u} -2 \pi \beta_{i} \delta_{0}. 
		\end{align*}
		We claim:
		\begin{align*}
		\lim_{r \rightarrow 0} r \| \nabla v \|_{ C^{0}(  \partial D_{r}  ) }=0,
		\end{align*}
		where $v(x)=u(x)-\beta_{i} \log |x|$. 
		
		Choosing  any sequence $\{ r_{k} \}$ with $r_{k} \rightarrow 0$, we set
		$$u_{k}(x)=u (r_{k} x)+\log r_{k}, \quad v_{k}(x)=v(r_{k}x).$$ For any $R>0$, on $D_{R} \backslash D_{ \frac{1}{R} }$, 
		\begin{align*}
		-\Delta u_{k}(x)=-r_{k}^{2} \Delta u( r_{k} x )=r_{k}^{2} K(r_{k} x) e^{ 2u(r_{k} x) }=K(r_{k} x) e^{ 2u_{k}(x)  }, 
		\end{align*}
		which yields that
		\begin{align*}
		\lim_{k \rightarrow +\infty} \Big| \int_{ D_{R} \backslash D_{ \frac{1}{R} }} K(r_{k} x ) e^{ 2u_{k}(x) } \Big| \leq \lim_{ k \rightarrow +\infty} \|K \|_{ L^{\infty}(\Sigma) } \int_{ D_{R r_{k}  \backslash D_{\frac{r_{k}}{R} } } }e^{2u} =0,
		\end{align*}
		where the last equality follows from $K \leq -1$ and 
		\begin{align}
		\label{finiteness of area}
		\int_{\Sigma} e^{2u} dV_{g_{0}}	 \leq \int_{\Sigma} -K e^{2u} dV_{g_{0}}=-\int_{\Sigma} K_{g_{0}} dV_{g_{0}}	 -2 \pi \sum_{i=1}^{m} \beta_{i}=-2 \pi \chi(\Sigma, \beta). 
		\end{align}
		It follows that the area of $\Sigma$ in $g=e^{2u}g_{0}$ is finite.  By Jensen's inequality,  the mean value $c_k'$ of $u_k$ over $D_R\backslash D_{1/R}$ is bounded from above. 
		By the Poincar\'e inequality, $\|u_k-c_k'\|_{L^1(D_R\backslash D_{1/R})}$ is bounded from above.
		Then by \cite[Corollary 2.4]{C-L2}, for any $p>1$, for sufficiently large $R$, there exists a constant $C=C(p,R)>0$ such that
		\begin{align*}
		\int_{ D_{R} \backslash D_{ \frac{1}{R} }} e^{2p(u_{k} -c_k')  } <C(p,R),
		\end{align*}
		which implies that
		\begin{align*}
		\int_{ D_{R} \backslash D_{ \frac{1}{R} }} e^{2pu_{k}   } <C(p,R).
		\end{align*}		
		By the definition of $v$, 
		\begin{align*}
		-\Delta v=Ke^{2u }(=K e^{2v} |x|^{2 \beta_{i}} ), \quad r^{-1} \| \nabla v \|_{ L^{1}(D_{r})  } \leq C. 
		\end{align*}
		Then
		\begin{align*}
		&	-\Delta v_{k}(x)=-r_{k}^{2} \Delta v_{k}(r_{k} x) =r_{k}^{2} K(r_{k}x)  e^{ 2u( r_{k}x )  }=K(r_{k}x) e^{ 2u_{k}(x)  }\\
		& R^{-1} \int_{D_{R}} |\nabla v_{k}|=(Rr_{k})^{-1} \int_{D_{R r_{k}}} |\nabla v| \leq C, 
		\end{align*}
		Now, by choosing $c_{k}$ to be the mean value of $v_{k}$ over $D_{2} \backslash D_{\frac{1}{2}}$, 
		$v_{k} -c_{k}$ converges to some function $v_{\infty}$ weakly in $W_{\loc}^{2,p}( \mathbb{R}^{2} \backslash \{0\} )$ with
		\begin{align*}
		&-\Delta v_{\infty}=0 \text{ on } \mathbb{R}^{2} \backslash \{0\}, \\
		& R^{-1} \int_{D_{R}} |\nabla v_{\infty} | \leq C, \quad \int_{ \partial D} \frac{\partial v_{\infty}}{\partial r}=0, 
		\end{align*}
		which yields that $v_{\infty}$ is a constant. Therefore, 
		\begin{align*}
		\lim_{k \rightarrow +\infty}	r_{k} \| \nabla v \|_{C^{0}( \partial D_{r_{k}}   )   }=\lim_{k \rightarrow +\infty} \| \nabla v_{k} \|_{ C^{0}( \partial D )  } =0. 
		\end{align*}
		By the arbitrariness of the sequence $\{r_{k}\}$, we obtain
		\begin{align*}
		\lim_{r \rightarrow 0} r \|\nabla v\|_{ C^{0}( \partial D_{r} )  }=0. 
		\end{align*}
		
		\textbf{Step 2:}
		Assume $u_{\alpha} \in \big(\cap_{p \in [1,2)} W^{1,p}(\Sigma,g_{0}) \big) \cap \big( \cap_{p>1} W_{\footnotesize\mbox{loc}}^{2,p}( \Sigma \backslash \{p_{1},\cdots,p_{m}\}, g_{0})\big)$ for $\alpha=1,2$ are two distinct solutions of the equation:
		\begin{align*}
		-\Delta_{g_{0}} u_{\alpha}=K e^{2u_{\alpha}}-K_{g_{0}}-2 \pi \sum_{i=1}^{m} \beta_{i} \delta_{p_{i}}. 
		\end{align*}
		We set $w=u_{1}-u_{2}$, then
		\begin{align*}
		-\Delta_{g_{0}} w=K e^{2u_{2}}( e^{2w}-1  ). 
		\end{align*}
		
		For any fixed $p_{i}$, by choosing an isothermal coordinate system with $p_{i}=0$ (the coordinate chart does not include $p_{j}$ for $j \neq i$), we set $v_{\alpha}(x)=u_{\alpha}(x)-\beta_{i} \log |x|$, then by \textbf{Step 1}, 
		$w=u_{1}-u_{2}=v_{1}-v_{2}$ satisfies
		\begin{align*}
		\lim_{ r \rightarrow 0} r \| \nabla w \|_{C^{0}(\partial D_{r})} \leq \lim_{ r \rightarrow 0} r (\| \nabla v_{1} \|_{C^{0}(\partial D_{r})}  +\| \nabla v_{2} \|_{C^{0}(\partial D_{r})}  )=0. 
		\end{align*}
		
		\textbf{Case 1}: we assume there exist $M>0$ and $|x_{k}|=t_{k} \rightarrow 0$ such that $|w(x_{k})|<M$. 
		
		For this case, 
		\begin{align*}
		\int_{D \backslash D_{t_{k}} }w \Delta w&=\int_{ D \backslash D_{t_{k}} } \big( \div( w \nabla w  )-|\nabla w |^{2} \big)\\
		&=\int_{\partial D} w \frac{\partial w}{\partial r}-\int_{\partial D_{t_{k}}} w \frac{\partial w}{\partial r}-\int_{D \backslash D_{t_{k}}} |\nabla w|^{2}.
		\end{align*}
		On the other hand, 
		\begin{align*}
		\int_{D \backslash D_{t_{k}} } w \Delta w=\int_{D \backslash D_{t_{k}} } K e^{2u_{2}}( 1-e^{2w} ) w \geq 0, 
		\end{align*}
		we obtain 
		\begin{align*}
		\int_{\partial D} w \frac{\partial w}{\partial r}-\int_{\partial D_{t_{k}}} w \frac{\partial w}{\partial r}-\int_{D \backslash D_{t_{k}}} |\nabla w|^{2} \geq 0. 
		\end{align*}
		For any $y \in \partial D_{t_{k}}$, 
		\begin{align*}
		|w(y)| \leq |w(x_{k})-w(y)|+|w(x_{k})| \leq \pi t_{k} \| \nabla w \|_{ C^{0}( \partial D_{t_{k}} ) }+M \leq 2M,
		\end{align*}
		then
		\begin{align*}
		\lim_{k \rightarrow \infty}\Big|\int_{ \partial D_{t_{k}}} w \frac{\partial w}{\partial r} \Big| \leq 2M \lim_{k \rightarrow \infty} 2 \pi t_{k} \| \nabla w\|_{ C^{0}(  \partial D_{t_{k}}  ) }=0,
		\end{align*}
		which yields 
		\begin{align*}
		\int_{\partial D} w \frac{\partial w}{\partial r}-\lim_{k \rightarrow \infty} \int_{D \backslash D_{t_{k}}} |\nabla w|^{2} \geq 0. 
		\end{align*}
		
		\textbf{Case 2:} we assume $\lim_{|x| \rightarrow 0} |w(x)|=+\infty$. 
		
		For this case, either $\lim_{|x| \rightarrow 0} w(x)=+\infty$ or $\lim_{|x| \rightarrow 0} w(x)=-\infty$ (otherwise, there exists $y_{k}$ with $|y_{k}|\rightarrow 0$ such that $w(y_{k})=0$). 
		
		\textbf{Case 2.1:}
		$\lim_{|x| \rightarrow 0} w(x)=+\infty$. Then there exists $r_{0}<\frac{1}{4}$ such that
		\begin{align*}
		w(x) \geq 0, \quad x \in D_{r_{0}}. 
		\end{align*}
		We choose $r$ sufficiently small such that
		\begin{align*}
		c_{r}:=\min_{ \partial D_{r} } w \geq \sup_{x \in D \backslash D_{r_{0}} } w(x). 
		\end{align*}
		We define the truncated function $w_{r}$ as 
		\begin{align*}
		w_{r}(x)=
		\left\{\begin{array}{rll}
		&c_{r}  & \text{when }  w(x) >c_{r}  \\
		&w(x) & \text{when } w(x) \leq c_{r} ,
		\end{array}\right.
		\end{align*}
		Clearly, $w_{r} \in W^{1,p}(D) \cap L^{\infty}(D)$ with
		$w=w_{r}$ on $D \backslash D_{ r_{0}}$. 
		We define the cutoff function $\eta_{\epsilon}$ as
		\begin{align*}
		\eta_{\epsilon}(x)=\left\{\begin{array}{rll}
		&1 & \text{when }  |x| \leq 1-\epsilon \\
		&1-\frac{1}{\epsilon }(|x|-1+\epsilon) & \text{when } 1-\epsilon<|x| \leq 1.
		\end{array}\right.
		\end{align*}
		We observe that on $D \backslash D_{r_{0}}$, 
		\begin{align*}
		K e^{2u_{2}} (e^{2w}-1) \eta_{\epsilon} w_{r}=K e^{2u_{2}} (e^{2w}-1) \eta_{\epsilon} w \leq 0,
		\end{align*}
		and on $D_{r_{0}}$, $w \geq 0$, $w_{r} \geq 0$, then
		\begin{align*}
		K e^{2u_{2}} (e^{2w}-1) \eta_{\epsilon} w_{r} \leq 0. 
		\end{align*}
		Then we obtain
		\begin{align*}
		0 \geq 	\int_{D} K e^{2u_{2}}( e^{2w}-1  ) \eta_{\epsilon} w_{r}&= 	\int_{D} \left\langle \nabla w, \nabla ( \eta_{\epsilon} w_{r} )\right \rangle \\
		& =\int_{D} \eta_{\epsilon} \left\langle \nabla w, \nabla w_{r} \right\rangle +\int_{D} w_{r} \left\langle \nabla w, \nabla \eta_{\epsilon} \right \rangle . 
		\end{align*}
		Since $\nabla \eta_{\epsilon}(x)=-\frac{1}{\epsilon} \frac{x}{|x|}$, then
		\begin{align*}
		\lim_{\epsilon \rightarrow 0} \int_{D} w_{r} \left\langle \nabla w, \nabla \eta_{\epsilon} \right \rangle=\lim_{\epsilon \rightarrow 0}	-\frac{1}{\epsilon}	 \int_{D \backslash D_{1-\epsilon}  } w \frac{\partial w}{\partial r}  =-\int_{\partial D} w \frac{\partial w}{\partial r},
		\end{align*}
		which yields that
		\begin{align*}
		0  \geq \int_{D}  \left\langle \nabla w, \nabla w_{r} \right\rangle -\int_{\partial D} w \frac{\partial w}{\partial r} =\int_{ D \backslash \{  x \in D: w(x) > c_{r}   \} } |\nabla w|^{2} -\int_{\partial D} w \frac{\partial w}{\partial r}, 
		\end{align*}
		where 
		\begin{align*}
		\{  x: w(x) > c_{r}     \} \subset D_{r_{0}} \subset D_{\frac{1}{4}}. 
		\end{align*}

		\textbf{Case 2.2:} 
		$\lim_{|x| \rightarrow 0} w(x)=-\infty$. Then there exists $r_{0}^{\prime}<\frac{1}{4}$ such that
		\begin{align*}
		w(x) \leq 0, \quad x \in D_{ r_{0}^{\prime} }. 
		\end{align*}
		We choose $r$ sufficiently small such that
		\begin{align*}
		c_{r}^{\prime}:=\max_{ \partial D_{r} } w \leq \inf_{x \in D \backslash D_{r_{0}^{\prime}} } w(x). 
		\end{align*}
		We define the truncated function $w_{r}$ as 
		\begin{align*}
		w_{r}(x)=
		\left\{\begin{array}{rll}
		&	c_{r}^{\prime}  & \text{when }  w(x) <	c_{r}^{\prime} \\
		&w(x) & \text{when } w(x) \geq 	c_{r}^{\prime},
		\end{array}\right.
		\end{align*}
		Clearly, $w_{r} \in W^{1,p}(D) \cap L^{\infty}(D)$ with
		$w=w_{r}$ on $D \backslash D_{r_{0}^{\prime} }$.
		Applying similar arguments as in \textbf{Case 2.1}, we can obtain
		\begin{align*}
		0  \geq \int_{D}  \left\langle \nabla w, \nabla w_{r} \right\rangle -\int_{\partial D} w \frac{\partial w}{\partial r} =\int_{ D \backslash \{  x \in D: w(x) <	c_{r}^{\prime}  \} } |\nabla w|^{2} -\int_{\partial D} w \frac{\partial w}{\partial r}, 
		\end{align*} 
		where 
		\begin{align*}
		\{  x \in D: w(x) < c_{r}^{\prime}  \}  \subset D_{r_{0}^{\prime}} \subset D_{\frac{1}{4}}. 
		\end{align*}
		
		Combining all the above cases, we conclude that we can always find a sequence  $\{A^{k}\}$ with $A^{k} \subset D_{\frac{1}{4}}$ and $\Area(A^{k}, g_{\mathbb{R}^{2}}) \rightarrow 0$ such that 
		\begin{align*}
		\int_{\partial D} w \frac{\partial w}{\partial r} -\lim_{k \rightarrow +\infty} \int_{ D \backslash A^{k}} |\nabla w|^{2} \geq 0. 
		\end{align*}
		
		\textbf{Step 3:} We choose $\{ \Omega_{i} \}_{i=1}^{m}$ such that
		\begin{align*}
		&p_{i} \in \Omega_{i}, \quad \partial \Omega \text{ is smooth}, \\ 
		& \Omega_{i} \cap \Omega_{j} =\emptyset, \quad \forall i \neq j. 
		\end{align*}
		Indeed, each $\Omega_{i}$ is chosen to be the domain $D$ if we choose an isothermal coordinate system with $p_{i}=0$. 
		Then
		\begin{align*}
		0 \leq 	\int_{ \Sigma \backslash \cup_{i=1}^{m} \Omega_{i} } 	 K e^{2u_{2}}( 1-e^{2w}  ) w&=\int_{\Sigma \backslash \cup_{i=1}^{m} \Omega_{i} } 	 w \Delta_{g_{0}} w 
		=\int_{\Sigma \backslash \cup_{i=1}^{m} \Omega_{i} }  	\div( w \nabla w )-|\nabla_{g_{0}} w|^{2}\\
		&=-\sum_{i=1}^{m} \int_{\partial \Omega_{i}} w \frac{\partial w}{\partial \vec{n}_{i} }-\int_{\Sigma \backslash \cup_{i=1}^{m} \Omega_{i} }  |\nabla_{g_{0}} w|^{2}, 
		\end{align*}
		where $\vec{n}_{i}$ is the outward-pointing unit normal vector of $\Omega_{i}$. 
		
		By \textbf{Step 2}, for any $i \in \{1,\cdots,m\}$, there exists a sequence $\{ A_{i}^{k} \}$ such that
		\begin{align*}
		A_{i}^{k} \subset\subset \Omega_{i}, \quad \lim_{k \rightarrow \infty} \Area( A_{i}^{k}, g_{0}  )=0,\quad \int_{\partial \Omega_{i}} w \frac{\partial w}{\partial \vec{n}_{i} } -\lim_{k \rightarrow +\infty} \int_{ \Omega_{i} \backslash A_{i}^{k} } |\nabla_{g_{0}} w|^{2} \geq 0.
		\end{align*} 
		Then we have
		\begin{align*}
		0 & \leq -\sum_{i=1}^{m} \int_{\partial \Omega_{i}} w \frac{\partial w}{\partial \vec{n}_{i} }-\int_{\Sigma \backslash \cup_{i=1}^{m} \Omega_{i} }  |\nabla_{g_{0}} w|^{2}+\sum_{i=1}^{m} \Big( \int_{\partial \Omega_{i}} w \frac{\partial w}{\partial \vec{n}_{i} } -\lim_{k \rightarrow +\infty} \int_{ \Omega_{i} \backslash A_{i}^{k} } |\nabla_{g_{0}} w|^{2} \Big) \\
		&=-\lim_{k \rightarrow +\infty} \int_{\Sigma \backslash \cup_{i=1}^{m} A_{i}^{k} } |\nabla_{g_{0}} w |^{2}, 
		\end{align*}
		which implies $|\nabla w| \equiv 0$, so $w$ is a constant. 
		We may assume $u_{1}=u_{2}+C$, where $C >0$. 
		Since
		\begin{align*}
		e^{2C} \int_{\Sigma} K e^{2u_{2}} dV_{g_{0}} = \int_{\Sigma} K e^{2u_{1}} dV_{g_{0}}=\int_{\Sigma} K_{g_{0}} dV_{g_{0}}+2 \pi \sum_{i=1}^{m} \beta_{i}=	\int_{\Sigma} K e^{2u_{2}} dV_{g_{0}}<0, 
		\end{align*}
		so $e^{2C}=1$, but this contradicts $C>0$. 	\end{proof}
	
	\subsection{Completeness of $g$ and finiteness of $\Area(\Sigma,g)$}
	
	Finiteness of area is proved in (\ref{finiteness of area}). We now argue that $g$ is complete. 
	In fact, by Lemma 4.7 in \cite{C-L2}, for large $m_0$,
	$$
	C_1\leq\frac{d_g(S_m,S_{m+1})}{\ell_g(L_m)}\leq C_2,\,\,\forall m>m_0
	$$
	where 
	$
	S_m=\partial D_{e^{-mL}}$ and $ L_m=[e^{-(m+1)L},e^{-mL}]\times\{0\}.
	$
	Let $x_0\in \partial D$ be any fixed point and let $x_n\in D_{e^{-i_nL}}\backslash D_{e^{-(i_n+1)L}}$ where $i_n\to +\infty$ as $n\to+\infty$. There is a curve $\gamma_n$ in $D\backslash D_{e^{-i_nL}}$ with endpoints in $\partial D$ and $\partial D_{e^{-i_nL}}$ such that $\ell_g(\gamma_n)-1\leq d_g(x_0,x_n)\leq\ell_g(\gamma_n)$. We have the following uniform behaviour as $x_n\to 0$: 
	
	{\bf Case 1}. $\sum^{\infty}_{n=1}\ell_g(L_{n})=+\infty$.
	Then $\lim_{n\to+\infty}d_g(x_0,x_n)=+\infty$ because
	\begin{align*}
	d_g(x_0,x_n)&\geq \ell_g(\gamma_n)-1\\
	&\geq \sum^{i_n}_{k=0}d_g(S_{k},S_{k+1})-1\\
	&\geq C_1\sum^{i_n}_{k=0}\ell_g(L_k)-1.
	\end{align*}
	
	{\bf Case 2}.  $\sum^{\infty}_{n=1}\ell_g(L_{n})=C<+\infty$. The line from $0$ to $x_n$ 
	intersects $\partial D_{e^{-(i_n+1)}L}$ at $x'_n$ and at $y_n\in\partial D$. Then 
	\begin{align*}
	d_g(x_0,x_n)&\leq d_g(x_n,y_n)+d_g(y_n,x_0) \\
	&\leq C_2\sum^{i_n}_{k=0}\ell_g(S_k,S_{k+1})+d_g(y_n,x_0)\\
	&\leq C_2C + \diam(D,g).
	\end{align*}
	
	We conclude completeness of $g$ from above discussion. 

	\bibliographystyle{plain}
	\bibliography{CM1}

\end{document}